\documentclass[12pt]{amsart}

\usepackage{amsfonts,amssymb,enumerate,bm,hhline,makecell,array,mathtools}
\usepackage{mathrsfs}
\usepackage{comment}
\usepackage[normalem]{ulem}
\usepackage[colorlinks=true,citecolor=blue, urlcolor=blue, linkcolor=blue, %pagebackref=true
]{hyperref}
\usepackage{amscd}
\usepackage[shortlabels]{enumitem}
\setlist[itemize]{noitemsep,nolistsep,topsep=-3pt}
\setlist[enumerate]{noitemsep,nolistsep,topsep=-3pt}
\usepackage{tensor}

\usepackage{tikz}
\usetikzlibrary{calc,trees,arrows,chains,shapes.geometric,%
	decorations.pathreplacing,decorations.pathmorphing,shapes,%
	matrix,shapes.symbols}
\tikzset{
	>=stealth',
	punktchain/.style={
		rectangle,
		rounded corners,
		draw=black, thick,
		minimum height=3em,
		text centered,
		on chain},
	line/.style={draw, thick, <-},
	element/.style={
		tape,
		top color=white,
		bottom color=blue!50!black!60!,
		minimum width=8em,
		draw=blue!40!black!90, very thick,
		text width=10em,
		minimum height=3.5em,
		text centered,
		on chain},
	every join/.style={->, thick,shorten >=1pt},
	decoration={brace},
	tuborg/.style={decorate},
	tubnode/.style={midway, right=2pt},
}
\usepackage{tikz-cd}
\usetikzlibrary{calc,matrix,patterns,shadings,backgrounds,fit}
\pgfdeclarelayer{myback}
\pgfsetlayers{myback,background,main}

\usepackage{extarrows}
\usepackage[new]{old-arrows}
\usepackage[toc,page]{appendix}
\usepackage[all,ps,cmtip,rotate]{xy}
\usepackage{color}
\allowdisplaybreaks

\def\fM{\mathfrak{m}}

\def\cD{\mathscr{D}}
\def\cA{\mathscr{A}}

\def\cF{\mathscr{F}}

\def\cO{\mathscr{O}}

\def\cP{\mathscr{P}}

\def\cJ{\mathscr{J}}

\def\cM{\mathscr{M}}

\def\cT{\mathscr{T}}

\def\cU{\mathscr{U}}

\def\cW{\mathscr{W}}
\def\cX{\mathscr{X}}
\def\cY{\mathscr{Y}}
\def\cZ{\mathscr{Z}}

\def\C{\ensuremath{\mathbb{C}}}

\def\H{\ensuremath{\mathbb{H}}}

\def\P{\ensuremath{\mathbb{P}}}
\def\Q{\ensuremath{\mathbb{Q}}}
\def\R{\ensuremath{\mathbb{R}}}
\def\Z{\ensuremath{\mathbb{Z}}}

\def\bv{\ensuremath{\mathbf{v}}}

\DeclareMathOperator{\Aut}{Aut}

\DeclareMathOperator{\Br}{Br}
\DeclareMathOperator{\ch}{ch}

\DeclareMathOperator{\coh}{coh}

\DeclareMathOperator{\codim}{codim}

\DeclareMathOperator{\Def}{Def}

\newcommand{\Db}{\mathrm{D^{b}}}
\newcommand{\Dqc}{\mathrm{D_{qc}}}
\newcommand{\Dqcinf}{\mathcal{D}_\mathrm{qc}}
\newcommand{\Dperf}{\mathrm{D_{perf}}}
\newcommand{\Dperfinf}{\mathcal{D}_\mathrm{perf}}

\DeclareMathOperator{\End}{End}

\DeclareMathOperator{\Ext}{Ext}
\DeclareMathOperator{\ext}{ext}

\DeclareMathOperator{\HH}{HH}

\DeclareMathOperator{\Hom}{Hom}

\def\Im{\mathop{\rm Im}\nolimits}

\DeclareMathOperator{\NS}{NS}

\DeclareMathOperator{\Pic}{Pic}

\DeclareMathOperator{\rk}{rk}

\DeclareMathOperator{\Spec}{Spec}
\DeclareMathOperator{\Stab}{Stab}

\DeclareMathOperator{\Sym}{Sym}

\DeclareMathOperator{\Tr}{Tr}
\DeclareMathOperator{\td}{td}

\DeclareMathOperator{\id}{\mathrm{id}}
\DeclareMathOperator{\Ho}{\mathrm{h}\!}

\theoremstyle{plain}
\newtheorem{lemm}{Lemma}[section]
\newtheorem{theo}[lemm]{Theorem}
\newtheorem{coro}[lemm]{Corollary}
\newtheorem{prop}[lemm]{Proposition}

\newtheorem*{claim*}{Claim}
\newtheorem{conj}[lemm]{Conjecture}

\theoremstyle{definition}
\newtheorem{defi}[lemm]{Definition}
\newtheorem{rema}[lemm]{Remark}

\newtheorem{exam}[lemm]{Example}
\newtheorem{ques}[lemm]{Question}

\makeatletter
\@namedef{subjclassname@2020}{%
  \textup{2020} Mathematics Subject Classification}
\makeatother

\def\citestacks#1{\cite[\href{https://stacks.math.columbia.edu/tag/#1}{Tag #1}]{stacks-project}}

\title[Hyper-K\"ahler varieties]{Hyper-K\"ahler varieties: Lagrangian fibrations, atomic sheaves, and categories}

\begin{document}

\author[A.~Bottini]{Alessio Bottini}
\address{\parbox{0.9\textwidth}{Mathematisches Institut, Universit\"at Bonn\\[1pt]
Endenicher Allee 60, 53115 Bonn, Germany
\vspace{1mm}}}
\email{bottini@math.uni-bonn.de}
\urladdr{\url{https://alessiobottini.github.io/}}

\author[E.~Macr\`i]{Emanuele Macr\`i}
\address{\parbox{0.9\textwidth}{Universit\'e Paris-Saclay, CNRS, Laboratoire de Math\'ematiques d'Orsay\\[1pt]
Rue Michel Magat, B\^at. 307, 91405 Orsay, France
\vspace{1mm}}}
\email{{emanuele.macri@universite-paris-saclay.fr}}
\urladdr{\url{https://www.imo.universite-paris-saclay.fr/~macri/}}

\author[P.~Stellari]{Paolo Stellari}
\address{\parbox{0.9\textwidth}{Dipartimento di Matematica ``F.~Enriques'', Universit\`a degli Studi di Milano\\[1pt]
Via Cesare Saldini 50, 20133 Milano, Italy
\vspace{1mm}}}
\email{paolo.stellari@unimi.it}
\urladdr{\url{https://sites.unimi.it/stellari}}

\subjclass[2020]{14D06, 14D20, 14F08, 14J42, 14J60, 18G80}
\keywords{Hyper-K\"ahler varieties, Lagrangian fibrations, atomic sheaves, derived categories, stability conditions}
\thanks{A.B.~and E.M.~were partially supported by the ERC Synergy grant ERC-2020-SyG-854361-HyperK. P.S.~ was partially supported by the ERC Consolidator grant ERC-2017-CoG-771507-StabCondEn, by the research project PRIN 2017 `Moduli and Lie Theory', and by the research project FARE 2018 HighCaSt (grant number R18YA3ESPJ)}

\begin{abstract}
We review recent developments in the theory of compact hyper-K\"ahler varieties, from the viewpoint of Lagrangian fibrations, moduli spaces of stable sheaves, and derived categories.
These notes originated from the lecture by the second named author at the \emph{2025 Summer Institute in Algebraic Geometry}, Colorado State University, Fort Collins (USA), July 14 -- August 1, 2025.
\end{abstract}

\maketitle

\setcounter{tocdepth}{1}
\tableofcontents

%%%%%%%%%%%%%%%%%%

\section{Introduction}\label{sec:intro}

Hyper-K\"ahler manifolds are compact K\"ahler manifolds, which are simply connected and have a unique holomorphic symplectic form up to constant.
They are the higher dimensional analogue of K3 surfaces: the key property of hyper-K\"ahler manifolds is that their geometry is encoded in lattice and Hodge theoretical data coming essentially from their second cohomology group. As such, they can be studied with a mix of techniques, coming from complex geometry, moduli spaces, arithmetic, and category theory.

The goal of this note is to present some recent developments in the theory of projective hyper-K\"ahler manifolds from the point of view of Lagrangian fibrations, namely integrable systems in the holomorphic setting. 
More in detail, by the Beauville--Bogomolov Decomposition Theorem, hyper-K\"ahler manifolds form one of the three building blocks, together with abelian varieties and strict Calabi--Yau manifolds, in the classification of smooth projective varieties with trivial canonical bundle. Thus it would be helpful yet ambitious to have a classification of projective hyper-K\"ahler manifolds.
The structure of a Lagrangian fibration helps towards the classification in two directions. First of all it turns out to be useful in constructing examples, by compactifying smooth abelian fibrations with special properties. 
And second of all, it helps in restricting the topological type.
The main conjecture in the theory is indeed that every hyper-K\"ahler manifold can be deformed to one admitting a Lagrangian fibration.

An important technique to understand and study Lagrangian fibrations is the theory of moduli spaces of stable sheaves. Pioneered in works by Mukai on K3 surfaces, the theory recently developed also in the higher dimensional case, with the concept of atomic sheaf.
Realizing a hyper-K\"ahler manifold as a moduli space over a smaller dimensional hyper-K\"ahler (or better K-trivial) manifold has several advantages and applications, including the study of cycles and Hodge classes, singularities, ample cones, but also to construct Lagrangian fibrations.

Finally, the study of moduli spaces is naturally related to (triangulated or higher) categories.
In fact, one of the fundamental and classical techniques which are used to study the geometry of moduli spaces is the Fourier--Mukai transform.
This comes handy to construct examples and to show nonemptiness of moduli spaces, which is the key to many applications, including to construct Lagrangian fibrations. 
An observation here is that, in order to get a satisfactory geometric picture, one is forced to leave the `commutative world' of the bounded derived categories of actual projective varieties and delve into categorical noncommutative analogues of such varieties. 
In this generality, we will need to consider two additional structures on those categories: their higher categorical structure and a suitable notion of stability.

Our aim is to present these three developments in the theory of hyper-K\"ahler manifold, Lagrangian fibrations, atomic sheaves, and categories, with a view towards classification of hyper-K\"ahler manifolds.
These will be the content of Sections~\ref{sec:Lagrangian},~\ref{sec:atomic}, and~\ref{sec:categories}, respectively.

A second aspect of our presentation is that, from the viewpoint of the classification of algebraic varieties, we must allow some singularities as well. 
The incarnation of this basic idea in the theory of hyper-K\"ahler manifolds is a fairly recent research topic, which has now reached a complete maturity, at a similar level as the smooth case.
Hence, in Section~\ref{sec:HK}, we will review the basic definitions and main general results on hyper-K\"ahler varieties (like the Torelli Theorem and the Beauville--Bogomolov Decomposition Theorem) in the singular setting.
It is also important to consider deformations of hyper-K\"ahler varieties in the analytic context.
We will mostly present the theory in the projective setting (since also moduli spaces and categories work best for projective hyper-K\"ahler varieties), but we will point out results in the analytic setting throughout the paper, when needed.

The reader should be warned that this paper should not be considered as a comprehensive presentation of the existing literature: it is just a guided tour through some of the major recent results, which will naturally lead us towards the presentation of our contributions in this area.

We work over the complex numbers.
When considering objects or notions of topological nature, like fundamental groups and singular cohomology, or being simply connected, we will always mean with respect to the analytic topology.

\subsection*{Acknowledgements}
It is a great pleasure for us to thank Enrico Arbarello, Arend Bayer, Aaron Bertram, Alberto Canonaco, Olivier Debarre, Daniel Huybrechts, Alexander Kuznetsov, Chunyi Li, Mirko Mauri, Giovanni Mongardi, Amnon Neeman, Kieran O'Grady, Alexander Perry, Laura Pertusi, Antonio Rapagnetta, Giulia Sacc\`a, Claire Voisin, and Xiaolei Zhao for many discussions through the years. 
Daniel Huybrechts, Mirko Mauri, Kieran O'Grady, Alexander Perry, and Giulia Sacc\`a also provided valuable comments on a preliminary version of this paper which helped us improving it.
We are also very grateful to Renzo Cavalieri, Gavril Farkas, Angela Gibney, Christopher Hacon, Andrei Okounkov, Bjorn Poonen, Karen Smith, and Chenyang Xu for organizing the 2025 Summer Research Institute in Algebraic Geometry and making it such an inspiring event.

%%%%%%%%%%%%%%%%%%

\section{Hyper-K\"ahler varieties}\label{sec:HK}

In this section we recall and discuss the definition of hyper-K\"ahler variety, and present the first examples. 
The various definitions and comparison between them are in Section~\ref{subsec:Def}; in Section~\ref{subsec:BBdec} we will review the Beauville--Bogomolov Decomposition Theorem and in Section~\ref{subsec:Torelli} the Torelli Theorem.
Finally, examples, mostly coming from moduli spaces of sheaves (or complexes) on K3 surfaces, are in Section~\ref{subsec:Examples}: the reader willing to get a grasp on the definitions can quickly move to this section.

In what follows, all varieties will be integral, normal, and quasi-projective over $\C$. 
For such a variety $X$, we denote by $X_\mathrm{reg}\subset X$ the open subset of regular points, and by $X_\mathrm{sing}$ its complement.

We refer to~\cite{Huy:HKbook,Deb:HK} for a more in depth presentation of the theory.

\subsection{Definition}\label{subsec:Def}

The definition of irreducible symplectic variety appears in the recent literature in different variants. Throughout this paper we adopt the following.

\begin{defi}\label{def:HK}
Let $M$ be an integral projective variety over $\C$, with terminal $\Q$-factorial singularities.
We say that $M$ is a  {\sf projective hyper-K\"ahler variety} if
\begin{enumerate}[{\rm (i)}]
    \item $H^0(M_\mathrm{reg},\Omega^2_{M_\mathrm{reg}})\cong \C\cdot\sigma$, and $\sigma$ is nondegenerate;
    \item $M_\mathrm{reg}$ is simply connected.
\end{enumerate}
\end{defi}

When $M$ is smooth, this is the standard definition of \emph{projective hyper-K\"ahler manifold} (see, e.g.,~\cite{Calabi,Hitchin,Bea:c10,Huy:Basic}).
We should stress instead that in the singular case this is not a standard definition: we start by comparing this definition with others in the literature, before going into the basic properties of hyper-K\"ahler varieties (in Section~\ref{subsec:Torelli} and Section~\ref{subsec:BBdec}).
This part is technical and it can be skipped on first reading.
More details about these definitions are in~\cite[Section 8.B.4]{GKP:Singular},~\cite[Section 2]{Sch:Fujiki},~\cite[Section 3]{BL:GlobalModuli}, and~\cite[Section 2.1]{EFGMS:Boundedness}.

The definition of symplectic variety is due to Beauville~\cite{Bea:Symplectic}:

\begin{defi}\label{def:symplectic}
Let $M$ be an integral, normal, quasi-projective variety over $\C$ and let $\sigma\in H^0(M_\mathrm{reg},\Omega^2_{M_\mathrm{reg}})$ be a 2-form on $M_\mathrm{reg}$.
We say that $(M,\sigma)$ is {\sf symplectic} if $\sigma$ is closed, nondegenerate, and it extends to a regular 2-form on any resolution of singularities of $M$.
\end{defi}

A few comments on Definition~\ref{def:symplectic} are in order here.
In this paper, by \emph{resolution of singularities} we mean a proper birational morphism $f\colon\widetilde{M}\to M$ such that $\widetilde{M}$ is regular.
The condition on the extension of $\sigma$ on any resolution means that there exists a $2$-form $\widetilde{\sigma}\in H^0\big(\widetilde{M},\Omega^2_{\widetilde{M}}\big)$ such that 
\[
\widetilde{\sigma}|_{f^{-1}(M_\mathrm{reg})}=f|_{f^{-1}(M_\mathrm{reg})}^*\sigma
\]
Since any two resolutions can be dominated by a common one, the extension condition can be checked on a single resolution.
Notice that the 2-form $\widetilde{\sigma}$ need not to be nondegenerate.

An immediate consequence of Definition~\ref{def:symplectic} (see, e.g.,~\cite[Proposition 1.3]{Bea:Symplectic}) is that $M$ has rational Gorenstein singularities.
By~\cite[Corollary 1.8]{KS:Extension}, a converse holds: if $M$ has rational singularities, then every holomorphic form defined on $M_\mathrm{reg}$ extends uniquely to a holomorphic form on any resolution $\widetilde{M}\to M$.
Moreover, if $M$ is projective with rational singularities then, by~\cite[Theorem 1.13]{KS:Extension}, any holomorphic form on $M_\mathrm{reg}$ is closed.
Since, in the context of Definition~\ref{def:HK}, $M$ is projective with terminal (hence rational) singularities, this proves the following result.

\begin{lemm}\label{lem:HKsymplectic}
Let $M$ be a projective hyper-K\"ahler variety.
Then $(M,\sigma)$ is symplectic.
\end{lemm}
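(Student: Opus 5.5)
We need to check the three conditions of Definition~\ref{def:symplectic} for the form $\sigma$ from Definition~\ref{def:HK}: nondegenerate on $M_\mathrm{reg}$, closed, and extends to any resolution. Nondegeneracy is part of Definition~\ref{def:HK}(i), and $M$ is integral and normal (terminal singularities are by definition normal) and projective, hence quasi-projective. So only closedness and extension remain, and both reduce to knowing that $M$ has rational singularities.

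The first step is therefore to recall that terminal singularities are rational. Terminal implies klt, and klt singularities are rational by the theorem of Elkik and Kawamata--Matsuda--Matsuki (see also Kollár--Mori, Theorem 5.22). Note that $\Q$-factoriality ensures $K_M$ is $\Q$-Cartier, so the discrepancy condition defining terminal singularities makes sense.

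With rationality in hand, the remaining steps are direct:
\begin{itemize}
\item \emph{Extension.} Apply~\cite[Corollary 1.8]{KS:Extension}. Every holomorphic form on $M_\mathrm{reg}$, in particular $\sigma$, extends to a holomorphic $2$-form $\widetilde\sigma$ on any resolution $f\colon\widetilde M\to M$. Here we use the convention that the extension agrees with $f^*\sigma$ over $f^{-1}(M_\mathrm{reg})$. As remarked above, it suffices to check this on one resolution.
\item \emph{Closedness.} Since $M$ is projective with rational singularities,~\cite[Theorem 1.13]{KS:Extension} says every holomorphic form on $M_\mathrm{reg}$ is closed; hence $d\sigma=0$.
\end{itemize}
Alternatively, for closedness one can argue classically. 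The extension $\widetilde\sigma$ is a holomorphic form on a smooth projective (hence compact Kähler) variety $\widetilde M$, where holomorphic forms are closed by Hodge theory. Then $d\sigma=0$ on $M_\mathrm{reg}$, since $f$ is an isomorphism over a dense open subset and $d\sigma$ is a holomorphic $3$-form vanishing there. (We may choose $\widetilde M$ projective by Hironaka.)

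There is no real obstacle: the only nontrivial input is the implication terminal $\Rightarrow$ rational, which is a standard result we cite rather than prove.
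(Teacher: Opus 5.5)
Your proposal is correct and follows the paper's argument: terminal singularities are rational, so \cite[Corollary 1.8]{KS:Extension} gives the extension of $\sigma$ to any resolution, \cite[Theorem 1.13]{KS:Extension} gives closedness, and nondegeneracy is built into Definition~\ref{def:HK}. Your alternative closedness argument via Hodge theory on a projective resolution is also valid, because $f$ is an isomorphism over a dense open subset of $M_\mathrm{reg}$, so the holomorphic $3$-form $d\sigma$ vanishes there and hence everywhere.
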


%Moreover, it has Gorenstein singularities.
%In this case, having canonical singularities is actually equivalent to have rational singularities (see~\cite[Th\'eor\`eme 1]{Elk:Canonical}).

Let us recall that on a normal quasi-projective variety $M$, if we denote by $j\colon M_\mathrm{reg}\hookrightarrow M$ the open inclusion, then we have
\[
j_*\Omega^p_{M_\mathrm{reg}} \cong \Omega^{[p]}_M,
\]
for all $0\leq p\leq\dim(M)$, where $\Omega^{[p]}_M\coloneqq (\Lambda^p\,\Omega^1_M)^{\vee\vee}$ denotes the sheaf of \emph{reflexive differentials} on $M$ (see~\cite[Section 2.E]{GKKP:Differential}).
Moreover, by~\cite[Theorem 2.4]{GKP:Singular}, if $M$ has klt singularities, we have
\[
\Omega^{[p]}_M \cong f_*\Omega^p_{\widetilde{M}},
\]
for all resolutions of singularities $f\colon\widetilde{M}\to M$.
From this we deduce that, if $M$ is also projective,
\begin{equation}\label{eq:HodgeDuality}
H^0(M_\mathrm{reg},\Omega^p_{M_\mathrm{reg}})=H^0(M,\Omega_M^{[p]})\cong\overline{H^p(M,\cO_M)}.    
\end{equation}
for all $0\leq p\leq\dim(M)$ (see~\cite[Proposition 6.9]{GKP:Singular}).
By~\cite[Theorem 5.2]{Keb:Pullback}, we can pull-back reflexive differentials along surjective morphisms of normal quasi-projective varieties, as long as the base has klt singularities, in a way that is compatible with the usual pull-back of K\"ahler differentials on the regular locus.

Let us also recall that a morphism of integral, normal, quasi-projective varieties is a \emph{quasi-\'etale cover} if it is finite, surjective, and \'etale in codimension~1.
By the Zariski--Nagata Purity Lemma~\citestacks{0BMB}, for an integral normal quasi-projective variety $X$, there is an equivalence of categories between quasi-\'etale covers of $X$ and finite \'etale covers of $X_\mathrm{reg}$ (and thus with the category of finite sets with a transitive group action of the fundamental group of $X_\mathrm{reg}$).

\begin{defi}\label{def:MoreSymplectic}
Let $(M,\sigma)$ be a projective symplectic variety.
We say that $M$ is:
\begin{enumerate}[{\rm (i)}]
    \item\label{enum:MoreSymplectic1} {\sf primitive symplectic}, if $H^0(M,\Omega_M^{[1]})=0$ and $H^0(M,\Omega_M^{[2]})\cong \C\cdot\sigma$;
    \item\label{enum:MoreSymplectic2} {\sf irreducible symplectic}, if all quasi-\'etale covers $M'\to M$ are primitive symplectic.
\end{enumerate}
\end{defi}

Clearly, an irreducible symplectic variety is primitive symplectic.
The definition of irreducible symplectic variety is different from the one in~\cite[Definition 8.16]{GKP:Singular}, but equivalent (as a consequence of the Beauville--Bogomolov Decomposition Theorem; see Section~\ref{subsec:BBdec}).

The following results completes Lemma~\ref{lem:HKsymplectic}.

\begin{prop}\label{prop:HKisIHS}
Let $M$ be a projective hyper-K\"ahler variety.
Then $M$ is irreducible symplectic.
\end{prop}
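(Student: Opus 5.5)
The argument reduces, via the Zariski--Nagata correspondence, to the trivial cover $M$ itself. Since $M_\mathrm{reg}$ is simply connected (Definition~\ref{def:HK}(ii)), it has no nontrivial connected finite \'etale covers. By the equivalence recalled above between quasi-\'etale covers of $M$ and finite \'etale covers of $M_\mathrm{reg}$, every quasi-\'etale cover $M'\to M$ is therefore an isomorphism. So it is enough to show that $M$ itself is primitive symplectic.

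By Lemma~\ref{lem:HKsymplectic}, $(M,\sigma)$ is symplectic. Using $H^0(M,\Omega^{[2]}_M)=H^0(M_\mathrm{reg},\Omega^2_{M_\mathrm{reg}})$ and Definition~\ref{def:HK}(i), we get $H^0(M,\Omega^{[2]}_M)\cong\C\cdot\sigma$. It remains to prove the vanishing $H^0(M,\Omega^{[1]}_M)=0$.

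By \eqref{eq:HodgeDuality}, $H^0(M,\Omega^{[1]}_M)\cong\overline{H^1(M,\cO_M)}$. I would compute $H^1(M,\cO_M)$ through a resolution $f\colon\widetilde M\to M$. Since $M$ has rational singularities, $H^1(M,\cO_M)\cong H^1(\widetilde M,\cO_{\widetilde M})$, and Hodge theory on the smooth projective $\widetilde M$ gives $\dim H^1(\widetilde M,\cO_{\widetilde M})=\tfrac12 b_1(\widetilde M)$. Now $\pi_1(M_\mathrm{reg})\to\pi_1(\widetilde M)$ is surjective, because $f^{-1}(M_\mathrm{reg})\cong M_\mathrm{reg}$ is a dense Zariski open subset of the smooth variety $\widetilde M$. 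Hence $\pi_1(\widetilde M)=1$, so $b_1(\widetilde M)=0$ and $H^1(M,\cO_M)=0$.

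Alternatively, one can argue directly with a reflexive $1$-form $\alpha$. Its extension to $\widetilde M$ is closed, since holomorphic forms on a compact K\"ahler manifold are closed. Restricted to the simply connected $M_\mathrm{reg}$, it is therefore $dg$ for a holomorphic function $g$. This $g$ extends to $M$ by normality, hence is constant, so $\alpha=0$.

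The only delicate point is the surjectivity on fundamental groups, i.e.\ that removing a proper closed subvariety from a smooth connected variety does not destroy surjectivity of $\pi_1$. This is standard, because the removed set has real codimension at least $2$. Everything else follows directly from the results quoted before the statement.
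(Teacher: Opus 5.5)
Your proof is correct, and its skeleton matches the paper's:
\begin{itemize}
\item symplecticity comes from Lemma~\ref{lem:HKsymplectic};
\item $H^0(M,\Omega^{[2]}_M)=\C\cdot\sigma$ comes from the definition;
\item reflexive $1$-forms vanish by Hodge duality, rational singularities and $b_1(\widetilde M)=0$;
\item quasi-\'etale covers are trivial because $\pi_1(M_\mathrm{reg})=1$.
\end{itemize}

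The real difference is how you obtain $\pi_1(\widetilde M)=1$. The paper goes through $M$ itself. It uses Fulton--Lazarsfeld to get $\pi_1(M_\mathrm{reg})\twoheadrightarrow\pi_1(M)$ for normal $M$, hence $\pi_1(M)=1$. It then invokes Takayama's theorem that $\pi_1(M)\cong\pi_1(\widetilde M)$ for klt singularities. You instead view $M_\mathrm{reg}$ directly as a dense Zariski open subset of the manifold $\widetilde M$ and use the elementary real-codimension-two surjectivity on $\pi_1$. This avoids the deep klt input of Takayama's theorem.

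For this to work, you need $f$ to be an isomorphism over $M_\mathrm{reg}$, and you should say so explicitly. The paper's notion of resolution (any proper birational map from a regular variety) does not guarantee it. A Hironaka resolution does, and since $H^1(M,\cO_M)\cong H^1(\widetilde M,\cO_{\widetilde M})$ holds for every resolution, nothing is lost by choosing one.

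What the paper's route buys is the intermediate statement $\pi_1(M)=1$. This is quoted later in the paper to conclude that $H^2(M,\Z)$ is torsion-free. Your route would need the Fulton--Lazarsfeld step separately for that.

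Your alternative argument, writing a closed $1$-form as $\alpha=dg$ on $M_\mathrm{reg}$, is also valid and is even leaner. It needs no comparison of fundamental groups and no Hodge duality. It uses only two facts: reflexive forms are closed on $M_\mathrm{reg}$, which follows from the extension to $\widetilde M$ and can be checked on a dense open set, so any resolution works; and holomorphic functions extend across $M_\mathrm{sing}$ because $M$ is normal.
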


\begin{proof}
First of all, by Lemma~\ref{lem:HKsymplectic}, we know that $(M,\sigma)$ is symplectic.
Since $M$ is a normal variety, we have a surjective morphism between the fundamental groups of $M_\mathrm{reg}$ and the one of $M$ (see~\cite[0.7(B), p.~33]{FL:Connectivity}).
Since in our assumption $M_\mathrm{reg}$ is simply connected, we deduce that $M$ is also simply connected.
Since $M$ has terminal (hence klt) singularities, its fundamental group is isomorphic to the fundamental group of any resolution of singularities $\widetilde{M}\to M$ (see~\cite[Theorem 1.1]{Tak:FundamentalGroup}); in particular, $\widetilde{M}$ is also simply connected. 
Since $M$ (hence $\widetilde{M}$) is projective, we deduce that $H^1(\widetilde{M},\cO_{\widetilde{M}})=0$.
Since $M$ has rational singularities, we have then
\[
H^1(M,\cO_M)=H^1(\widetilde{M},\cO_{\widetilde{M}})=0.
\]
By~\eqref{eq:HodgeDuality}, we conclude that $H^0\big(M,\Omega_M^{[1]}\big)=0$, and so $M$ is primitive symplectic.
Finally, since $M_\mathrm{reg}$ is simply connected, there is no nontrivial quasi-\'etale cover, hence $M$ is irreducible symplectic, as we wanted.
\end{proof}

An important result in the theory is~\cite[Corollary 13.2]{GGK:klt}: the algebraic fundamental group of the regular locus of an irreducible symplectic variety is finite.
Hence, given a irreducible symplectic variety $M$, there exists a quasi-\'etale cover $M'\to M$ (the \emph{universal quasi-\'etale cover}) which is again irreducible symplectic and $M_\mathrm{reg}'$ is algebraically simply connected.
This is called a \emph{strict irreducible symplectic variety} in~\cite{EFGMS:Boundedness}.
We mention the following general expectation, which we learned from Rapagnetta (see~\cite[Sections 1.4 \& 13]{GGK:klt}, \cite[Remark 1.11]{PR:K3}, and \cite[Conjecture 1.3]{Wang}):

\begin{conj}\label{conj:Rapagnetta}
Let $M$ be a projective irreducible symplectic variety.
Then $M_\mathrm{reg}$ has finite fundamental group.
\end{conj}

If Conjecture~\ref{conj:Rapagnetta} is true, then $M_\mathrm{reg}'$ is actually simply connected.
We will briefly review in Section~\ref{subsec:EFGMS} that Conjecture~\ref{conj:Rapagnetta} follows from a general conjecture on Lagrangian fibrations for irreducible symplectic varieties.

As a final comment, we observe the following converse to Proposition~\ref{prop:HKisIHS}:

\begin{prop}\label{prop:IHSisHK}
Let $M$ be a projective primitive symplectic variety such that its regular locus $M_\mathrm{reg}$ is simply connected.
Then any $\Q$-factorial terminalization is a projective hyper-K\"ahler variety.
\end{prop}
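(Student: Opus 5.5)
Let $\pi\colon M'\to M$ be a $\Q$-factorial terminalization, i.e.\ a projective birational morphism with $M'$ terminal and $\Q$-factorial and $K_{M'}=\pi^*K_M$. Such a morphism exists by BCHM, since $M$ has canonical (indeed rational Gorenstein, with $K_M\sim 0$ given by $\sigma^{n}$) singularities. The plan is to check the two conditions of Definition~\ref{def:HK} for $M'$. Integrality, projectivity and the singularity requirements hold by construction.

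\emph{Condition (i).} The crepancy $K_{M'}=\pi^*K_M$ means that the pull-back of $\sigma$ to $\pi^{-1}(M_\mathrm{reg})\cap M'_\mathrm{reg}$ extends to a reflexive 2-form $\sigma'$ on $M'$. Indeed, $\sigma$ extends to any resolution of $M$ because $M$ is symplectic, and by \cite[Theorem 2.4]{GKP:Singular} forms on a resolution push down to reflexive forms on the klt variety $M'$. Alternatively one can use the pull-back of reflexive differentials \cite[Theorem 5.2]{Keb:Pullback}. The top power $\sigma'^{\,n}$ is a section of $\omega_{M'}=\pi^*\omega_M$ that is nowhere vanishing on the big open set where $\pi$ is an isomorphism. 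Since the divisor of $\sigma'^{\,n}$ is $\pi^*$ of the (zero) divisor of $\sigma^n$, it is nonvanishing on all of $M'_\mathrm{reg}$, so $\sigma'$ is nondegenerate there. For uniqueness, note that $M$ and $M'$ have a common resolution $\widetilde M$. By \eqref{eq:HodgeDuality} and rationality of both singularities,
\[
H^0(M',\Omega^{[2]}_{M'})\cong H^0(\widetilde M,\Omega^2_{\widetilde M})\cong H^0(M,\Omega^{[2]}_M)=\C\cdot\sigma .
\]

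\emph{Condition (ii).} This is the main obstacle: we must show that $M'_\mathrm{reg}$ is simply connected. Set $U=\pi^{-1}(M_\mathrm{reg})\subset M'$. Because $K_{M'}=\pi^*K_M$ and $M_\mathrm{reg}$ is smooth, the map $\pi$ is an isomorphism over $M_\mathrm{reg}$; any exceptional divisor there would have positive discrepancy. Hence $U\cong M_\mathrm{reg}$ is a simply connected open subset of $M'_\mathrm{reg}$. Since $M'_\mathrm{reg}$ is a connected complex manifold, the inclusion $U\hookrightarrow M'_\mathrm{reg}$ induces a surjection on fundamental groups, by the same fact used in the proof of Proposition~\ref{prop:HKisIHS}: the complement of a proper analytic subset of a normal space does not shrink $\pi_1$ beyond a surjection \cite[0.7(B)]{FL:Connectivity}. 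It follows that $\pi_1(M'_\mathrm{reg})$ is a quotient of $\pi_1(U)=1$, hence trivial.

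The delicate point is the identification $U\cong M_\mathrm{reg}$, that is, that a crepant terminalization does not modify smooth points. If this causes trouble, I would instead argue with the exceptional divisors $E_i$ of $\pi$. For each $i$, $\pi_1$ of $M'_\mathrm{reg}\setminus\bigcup E_i$ (an open subset of $M_\mathrm{reg}$) surjects onto $\pi_1(M'_\mathrm{reg})$. The meridian loops around the $E_i$ then need to be killed. Their images in $\pi_1(M_\mathrm{reg})$ are trivial, and they are also trivial in $\pi_1(M'_\mathrm{reg})$, since each $E_i$ meets $M'_\mathrm{reg}$ in a divisor of the manifold $M'_\mathrm{reg}$, so the meridians bound small discs there.
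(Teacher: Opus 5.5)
Your proof is correct. For condition (ii) it is exactly the paper's argument: $\pi^{-1}(M_\mathrm{reg})\cong M_\mathrm{reg}$ is a simply connected Zariski-open subset of the connected manifold $M'_\mathrm{reg}$. Then \cite[0.7(B)]{FL:Connectivity} makes $\pi_1(M'_\mathrm{reg})$ a quotient of its fundamental group, hence trivial.

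The difference is in condition (i). The paper just invokes \cite[Proposition 11]{Sch:Fujiki}: a $\Q$-factorial terminalization of a primitive symplectic variety is again primitive symplectic. You instead check by hand exactly what Definition~\ref{def:HK} asks for:
\begin{enumerate}[{\rm (1)}]
\item the extension of $\pi^*\sigma$ across the exceptional divisors, using that $M$ has symplectic singularities;
\item nondegeneracy on all of $M'_\mathrm{reg}$, from $\mathrm{div}(\pi^*\sigma^{n})=\pi^*\mathrm{div}(\sigma^n)+\sum a_iE_i$ with all $a_i=0$;
\item $h^0(M',\Omega^{[2]}_{M'})=1$, via a common resolution and \cite[Theorem 2.4]{GKP:Singular}.
\end{enumerate}
This gives a self-contained argument using only facts recalled in Section~\ref{subsec:Def}. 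The citation is shorter and yields the full primitive symplectic structure on $M'$.

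You also justify the isomorphism over $M_\mathrm{reg}$, which the paper merely asserts. Your discrepancy argument only excludes exceptional \emph{divisors} over $M_\mathrm{reg}$. To finish, add that the exceptional locus of a proper birational morphism onto a smooth variety has pure codimension one (van der Waerden purity). This rules out small exceptional loci over $M_\mathrm{reg}$ as well.

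With that in place, drop the fallback paragraph. It is not needed, and as written it is not sound. First, $M'_\mathrm{reg}\setminus\bigcup E_i$ need not embed into $M_\mathrm{reg}$ when $\pi$ is small over a non-$\Q$-factorial singular point of $M$. Contracting a Lagrangian $\mathbb{P}^2$ in a hyper-K\"ahler fourfold gives such an example, and it satisfies all the hypotheses. Second, knowing that the meridians of the $E_i$ die in $\pi_1(M'_\mathrm{reg})$ does not by itself kill the image of $\pi_1(M'_\mathrm{reg}\setminus\bigcup E_i)$.
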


We recall that, a \emph{$\Q$-factorial terminalization} of $M$ is a proper birational morphism $g\colon\widehat{M}\to M$ where $\widehat{M}$ is a projective variety with $\Q$-factorial terminal singularities such that $\omega_{\widehat{M}}=g^*\omega_M(\cong\cO_{\widehat{M}})$.
By~\cite[Corollary 1.4.3]{BCHM}, $\Q$-factorial terminalizations always exist.

\begin{proof}
Let $\widehat{M}\to M$ be a $\Q$-factorial terminalization.
By~\cite[Proposition 11]{Sch:Fujiki}, $\widehat{M}$ is again primitive symplectic.
Moreover, $\widehat{M}_\mathrm{reg}$ is again simply connected.
Indeed, $g$ induces an isomorphism $g^{-1}(M_\mathrm{reg})\xrightarrow{\cong} M_\mathrm{reg}$.
Then, by applying~\cite[0.7(B), p.~33]{FL:Connectivity} to the inclusion $g^{-1}(M_\mathrm{reg})\subset\widehat{M}_\mathrm{reg}$, we deduce that $\widehat{M}_\mathrm{reg}$ is simply connected as well.
\end{proof}

\subsection{Primitive symplectic varieties and the Torelli Theorem}\label{subsec:Torelli}

The notion of primitive symplectic variety in Definition~\ref{def:MoreSymplectic}\ref{enum:MoreSymplectic1} has been singled out in works by Schwald~\cite{Sch:Fujiki}\footnote{In \emph{loc.~cit.}, these are unfortunately called irreducible symplectic varieties.} and Bakker--Lehn~\cite{BL:GlobalModuli}.
It is the correct notion for proving the Torelli Theorem and studying Lagrangian fibrations.
We will briefly review deformations and the Torelli Theorem in this section.
The special case of hyper-K\"ahler varieties is discussed at the end.

The first remark is that, as in the smooth case, the definition can be extended to the analytic setting, where `projective' is replaced by `compact K\"ahler analytic space' (see~\cite[Section 2.3 \& Definition 3.1]{BL:GlobalModuli}).
We will not deal with the analytic case in this paper in detail, but it is necessary for many of the results we state in this and the next section (and in their proofs).

The second observation is that, since a symplectic variety $M$ has rational singularities, (the torsion-free part of) its second cohomology group $H^2(M,\Z)_\mathrm{tf}$ carries a pure weight~2 Hodge structure (see~\cite[Corollary 3.5]{BL:GlobalModuli}).
Moreover, we have the identifications
\[
H^{2,0}(M)=H^0\big(M,\Omega_M^{[2]}\big)\qquad H^{0,2}=H^2(M,\cO_M)
\]
and the Hodge duality~\eqref{eq:HodgeDuality}.

Let $M$ denote a compact K\"ahler primitive symplectic analytic variety (for simplicity we will simply refer in the sequel to a primitive symplectic variety, and stress when it is projective in the algebraic setting) of dimension~$2n$.
We first discuss deformations.
We recall that a deformation in our context is just a proper and flat morphism, and a deformation is \emph{locally trivial} if every point has an analytic neighborhood which is a trivial deformation (see, e.g.,~\cite[Definition 4.1]{BL:GlobalModuli}).
In particular, this condition guarantees that the singularities stay the same.
A key point is~\cite[Theorem 4.7]{BL:GlobalModuli} (extending to the singular case the Bogomolov--Tian--Todorov Theorem): if we denote by $\mathrm{Def}^\mathrm{lt}(M)$ the space of locally trivial deformations of $M$, then $\mathrm{Def}^\mathrm{lt}(M)$ is universal and smooth of dimension $h^{1,1}(M)$.
Moreover, by \cite[Corollary 4.11]{BL:GlobalModuli}, a small locally trivial deformation of a primitive symplectic variety is again primitive symplectic.
We say that two primitive symplectic varieties are {\sf deformation-equivalent} if they can be connected by a locally trivial deformation.

As in the smooth case, having a good theory of deformations has important consequences. The first one is that there is an integral, indivisible, nondegenerate quadratic form $q_M$ on $H^2(M,\Z)_\mathrm{tf}$, called the {\sf Beauville--Bogomolov--Fujiki form} (BBF form, for short), with signature $(3,b_2(M)-3)$, positive on ample/K\"ahler classes, compatible with the Hodge structure, and it is invariant under locally trivial deformations.
The BBF form generalizes the usual intersection form on surfaces.
In the smooth case, the existence and its basic properties were proved in~\cite{Bea:c10,Fuj:dR}, and the extension to the singular case is in~\cite{Nam:Extension,Kir:Period,Mat:Base,Sch:Fujiki,BL:GlobalModuli}; in particular, we refer to~\cite[Section 5.1]{BL:GlobalModuli} for all the above properties and for the local Torelli Theorem which is needed to prove them (see also~\cite{Nam:Def1,Nam:Def2}).
We will not recall here the definition of the BBF form, but we simply observe that it is characterized (up to sign) by the following property: there exists a constant $c_M\in\Q_{>0}$, called the {\sf Fujiki constant}, such that the {\sf Fujiki relation} holds
\[
\int_M \alpha^{2n} = (2n-1)!!\, c_M\, q_M(\alpha)^n,
\]
for all $\alpha\in H^2(M,\Z)$, where $(2n-1)!!$ denotes the double factorial.

This leads to a projectivity criterion: a primitive symplectic variety $M$ is projective if and only if there is a (1,1)-class in $\alpha\in H^2(M,\Q)$ such that $q_M(\alpha)>0$ (see~\cite[Theorem 1.2]{BL:GlobalModuli}; in the smooth case this is~\cite[Theorem 3.11]{Huy:Basic}).
In particular, every primitive symplectic analytic variety is deformation-equivalent to a projective primitive symplectic variety.

The second consequence is that we can glue the local pictures.
More precisely, let us fix a lattice $(\Lambda,q)$ or rank $b$ and signature $(3,b-3)$.
We can then speak about the moduli space $\mathfrak{M}_{\Lambda}$ of {\sf marked primitive symplectic varieties} $(M,\eta)$ (with respect to locally trivial deformations), where $\eta\colon(H^2(M,\Z)_\mathrm{tf},q_M)\xrightarrow{\cong}(\Lambda,q)$ is an isometry, which is then a not-necessarily-Hausdorff complex manifold of dimension~$b-2$.
Notice that $O(\Lambda)$ naturally acts on $\mathfrak{M}_{\Lambda}$.
We then have a period map
\[
P_\Lambda\colon\mathfrak{M}_{\Lambda}\longrightarrow\Omega_\Lambda\qquad (M,\eta)\longmapsto\eta(H^{2,0}(M)),
\]
where the period domain is defined by
\[
\Omega_\Lambda\coloneqq\left\{[x]\in\P(\Lambda_\C)\,:\, q(x)=0\text{ and }q(x,\overline{x})>0\right\}.
\]

The Torelli Theorem (see~\cite[Theorem 1.1]{BL:GlobalModuli}; in the smooth case this is~\cite{Ver:Torelli,Huy:Torelli}; see also~\cite{Menet:Torelli} for the orbifold case) can then be summarized as follows. 
First of all, the Hausdorff reduction $H\colon\mathfrak{M}_{\Lambda}\to\overline{\mathfrak{M}}_{\Lambda}$ exists, so that we have a factorization
\[
\xymatrix{
\mathfrak{M}_{\Lambda}\ar[rr]^{P_\Lambda}\ar[rd]_{H}&&\Omega_\Lambda\\
& \overline{\mathfrak{M}}_{\Lambda}\ar[ur]_{\overline{P}_\Lambda} &
},
\]
where all maps are holomorphic.
Moreover, nonseparated points give rise to bimeromoprhic primitive symplectic varieties.
Let us now fix a connected component $\mathfrak{M}^0\subset\mathfrak{M}_{\Lambda}$ and assume that $b\geq5$.
Then the stabilizer subgroup $\Stab_{O(\Lambda)}(\mathfrak{M}^0)\subset O(\Lambda)$ of $\mathfrak{M}^0$ has finite index.\footnote{If we define the {\sf monodromy group} $\mathrm{Mon}(M)$ of a primitive symplectic variety $M$ as the subgroup of $O(H^2(M,\Z)_\mathrm{tf})$ generated by all monodromies induced by locally trivial deformations of $M$, then we have $\Stab_{O(\Lambda)}(\mathfrak{M}^0)=\eta\, \mathrm{Mon}(M)\,\eta^{-1}$, for $(M,\eta)\in\mathfrak{M}^0$.}
Moreover, if one point in $\mathfrak{M}^0$ corresponds to a primitive symplectic variety with $\Q$-factorial terminal singularities\footnote{Some attention is needed in the analytic setting. For instance, the existence of a terminalization, or even the definition of $\Q$-factoriality, is not completely clear (see~\cite[Section 2.12]{BL:GlobalModuli}).}, then the same is true for all points in $\mathfrak{M}^0$ and the restriction
\[
\overline{P}_{\Lambda}|_{\overline{\mathfrak{M}^0}}\colon\overline{\mathfrak{M}^0}\longrightarrow\Omega_\Lambda
\]
is an isomorphism.\footnote{In \emph{loc.~cit.}, there is a version of the Torelli Theorem for general primitive symplectic varieties, not necessarily with $\Q$-factorial terminal singularities.}

Everything we saw in this section holds true for hyper-K\"ahler varieties. 
In fact, the definition of hyper-K\"ahler varieties can be given in the analytic setting as a primitive symplectic analytic variety, with $\Q$-factorial terminal singularities, and having regular part which is simply connected.
This last condition is preserved by locally trivial deformations: hence, if one point in a connected component $\mathfrak{M}^0$ of $\mathfrak{M}_\Lambda$ corresponds to a hyper-K\"ahler variety, then all points are hyper-K\"ahler varieties, and the Torelli Theorem holds in this context in the same form as above.
In particular, deformation-equivalence is well-defined for hyper-K\"ahler varieties, and a hyper-K\"ahler variety is deformation-equivalent to a projective one.
In fact, the projectivity criterion implies that two deformation-equivalent projective hyper-K\"ahler varieties can be connected by a projective locally trivial deformation over a connected quasi-projective base.

Properties that are more specific to hyper-K\"ahler varieties are the following.
First of all, since hyper-K\"ahler varieties are simply connected (as observed in the proof of Proposition~\ref{prop:HKisIHS}), the second cohomology group is torsion-free.
Moreover, two birational projective hyper-K\"ahler varieties are deformation-equivalent (see~\cite[Theorem 6.16]{BL:GlobalModuli}; in the smooth case this is~\cite[Theorem 4.6]{Huy:Basic}).

\subsection{Irreducible symplectic varieties and the Decomposition Theorem}\label{subsec:BBdec}

The notion of irreducible symplectic variety was introduced in~\cite{GKP:Singular}, in the context of the Beauville--Bogomolov Decomposition Theorem.
We first recall the original definition (see~\cite{BGL:Decomposition} for the analytic version).

\begin{defi}\label{def:MoreCY}
Let $X$ be a compact K\"ahler variety with canonical singularities of dimension $n\geq1$.
We say that $X$ is
\begin{enumerate}[{\rm (i)}]
    \item\label{enum:MoreCY1} {\sf irreducible Calabi--Yau} if the algebra $H^0(X',\Omega_{X'}^{[\bullet]})$ is generated  by a nowhere vanishing reflexive form in degree~n, for all quasi-\'etale covers $X'\to X$;
    \item\label{enum:MoreCY2} {\sf irreducible symplectic} if the algebra $H^0(X',\Omega_{X'}^{[\bullet]})$ is generated by a symplectic form $\sigma'\in H^0(X',\Omega_{X'}^{[2]})$, for all quasi-\'etale covers $X'\to X$.
\end{enumerate}
\end{defi}

In Definition~\ref{def:MoreCY}\ref{enum:MoreCY2}, the condition on the exterior algebra means that
\[
H^0\big(X',\Omega_X^{[p]}\big)=\begin{cases}
    \C\cdot f^*\sigma^{\wedge p/2},&\text{if $p$ is even}\\
    0,&\text{if $p$ is odd}
\end{cases}
\]
for all quasi-\'etale covers $f\colon X'\to X$.
Similarly, in Definition~\ref{def:MoreCY}\ref{enum:MoreCY1}, we have that $H^0\big(X',\Omega_X^{[p]}\big)=0$, for $0<p<n$.

Clearly, an irreducible symplectic variety in the sense of Definition~\ref{def:MoreCY}\ref{enum:MoreCY2} is irreducible symplectic in the sense of Definition~\ref{def:MoreSymplectic}\ref{enum:MoreSymplectic2}.
The converse follows immediately from the following fundamental result (see~\cite{GKKP:Differential,Dru:Decomposition,DG:Decomposition,Gue:Stability,GGK:klt,HP:Integrability,Cam:BB,BGL:Decomposition}; in the smooth case, this is~\cite[Th\'eor\`eme 1]{Bea:c10}).

\begin{theo}[Beauville--Bogomolov Decomposition Theorem]\label{thm:BB}
Any compact K\"ahler space with numerically trivial canonical class and klt singularities admits a quasi-\'etale cover which can be decomposed into a product of complex tori, irreducible Calabi--Yau varieties, and irreducible symplectic varieties, in the sense of Definition~\ref{def:MoreCY}.
\end{theo}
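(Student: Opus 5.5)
The plan follows the classical route of Beauville and Bogomolov, with the singular-case inputs supplied by the cited works; it is a sketch of the strategy, not a self-contained argument. Let $X$ be a compact K\"ahler space with klt singularities and $K_X\equiv 0$.

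\textbf{Step 1: reduce to $K_X\sim 0$ and canonical singularities.} By abundance for numerically trivial klt spaces, $K_X$ is torsion. The index-one cover is quasi-\'etale, and on it the canonical class is trivial and the singularities are canonical. So we may assume $\omega_X\cong\cO_X$.

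\textbf{Step 2: get a holonomy decomposition on $X_\mathrm{reg}$.}
\begin{itemize}
\item Choose a K\"ahler class. Use the existence of singular Ricci-flat K\"ahler metrics (Eyssidieux--Guedj--Zeriahi) that are smooth on $X_\mathrm{reg}$.
\item Show that the tangent sheaf $T_X$ is polystable with respect to that class. This follows from the Ricci-flat metric, as in Guenancia's work.
\item Decompose $T_X$ accordingly. Apply de Rham's decomposition to the restricted holonomy of the incomplete manifold $X_\mathrm{reg}$, using Berger's classification for each factor. Each irreducible factor then has holonomy $\mathrm{SU}$, $\mathrm{Sp}$, or is flat.
\end{itemize}

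\textbf{Step 3: from holonomy data to global forms.} Parallel forms on $X_\mathrm{reg}$ correspond to holomorphic reflexive forms via a Bochner principle. This is where the extension theorems of Greb--Kebekus--Kov\'acs--Peternell and Kebekus--Schnell, quoted before~\eqref{eq:HodgeDuality}, enter.
\begin{itemize}
\item The $\mathrm{Sp}$-factors give symplectic reflexive forms.
\item The $\mathrm{SU}$-factors give nowhere-vanishing top forms.
\item The flat part gives reflexive $1$-forms.
\end{itemize}

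\textbf{Step 4: integrate the splitting into a product.} This is the main obstacle. After a further quasi-\'etale cover, the splitting $T_X=\bigoplus F_i$ must be made algebraically integrable and realized by an actual product of spaces. The flat part must also become a torus after passing to an Albanese-type cover. For the flat part I would use Kawamata's Albanese splitting together with finiteness of the algebraic fundamental group of the remaining part; that finiteness is \cite[Corollary 13.2]{GGK:klt} in the symplectic case and Greb--Guenancia--Kebekus in general. For the non-flat summands I would use the algebraic integrability criteria for foliations with nef/semipositive conormal (H\"oring--Peternell). That these leaves are compact and split off as a product uses the fact that the holonomy is finite modulo its connected component after a quasi-\'etale cover. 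In the K\"ahler, non-projective case I would reduce to the projective case via locally trivial deformations and the projectivity criterion, as in Bakker--Guenancia--Lehn.

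\textbf{Step 5: identify each factor's type.} Each remaining factor has irreducible holonomy $\mathrm{SU}(m)$ or $\mathrm{Sp}(r)$, and this persists under every further quasi-\'etale cover. Then the Bochner principle of Step 3 computes the algebra $H^0(X',\Omega^{[\bullet]}_{X'})$ exactly as required in Definition~\ref{def:MoreCY}.
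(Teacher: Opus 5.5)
Your outline has the same architecture as the works the paper cites for this theorem; the paper gives no argument of its own. That architecture is: index-one cover, singular Ricci-flat metric and polystability of $T_X$, holonomy decomposition and Bochner principle, algebraic integrability of the summands, and algebraic approximation in the K\"ahler case. The gap is in Step~4, exactly where you locate the difficulty.

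There is no algebraic-integrability criterion for foliations with nef or semipositive conormal, and none could work here. In the splitting $T_X=\bigoplus_i F_i$, every summand, flat or not, has conormal sheaf $\bigoplus_{j\neq i}F_j^\vee$, which is polystable of degree zero. Yet flat summands typically have non-algebraic leaves, as for a linear foliation on a complex torus. Semipositivity of the conormal bears on Frobenius integrability, which is automatic here since the $F_i$ are parallel; it says nothing about algebraicity of leaves.

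The missing input is what separates the $\mathrm{SU}$ and $\mathrm{Sp}$ summands from the flat ones: they are strongly stable and \emph{not flat}, which shows up as $c_2(F_i)\cdot H^{n-2}\neq 0$. H\"oring--Peternell's theorem is precisely that a sufficiently stable foliation with $K_F\equiv 0$ and nonvanishing second Chern class is algebraically integrable. They prove it through a flatness criterion for reflexive sheaves, showing that a non-algebraic such foliation would be flat. The product structure after a quasi-\'etale cover then comes from Druel's argument for an algebraically integrable summand that admits a complement.

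Finiteness of the holonomy group modulo its identity component is not what makes the leaves compact. It is needed earlier, in Step~2. Only on the resulting quasi-\'etale holonomy cover do the summands of $T_X$ coincide with the restricted-holonomy factors. Otherwise $\pi_1(X_\mathrm{reg})$ can permute several such factors inside a single stable summand.

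Your treatment of the flat part should also be replaced. The finiteness results of Greb--Guenancia--Kebekus (e.g.\ their Corollary~13.2) concern varieties already known to be irreducible symplectic, or irreducible Calabi--Yau of even dimension. Invoking them for the `remaining part' before it has been decomposed is circular, and they do not cover odd-dimensional Calabi--Yau factors. They are also unnecessary:
\begin{enumerate}[{\rm (i)}]
\item Since $q(X')\le\dim X'$ for every quasi-\'etale cover $X'$, you can choose one of maximal irregularity.
\item Kawamata's theorem then writes it, up to an \'etale cover, as $A\times Z$ with $A$ abelian. Maximality forces the augmented irregularity $\widetilde{q}(Z)$ to vanish.
\item On the holonomy cover of $Z$, a flat summand of the tangent sheaf would have trivial holonomy.
\item It would therefore give parallel vector fields and, dually, nonzero reflexive $1$-forms, contradicting $\widetilde{q}(Z)=0$.
\end{enumerate}
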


We could state a version of Theorem~\ref{thm:BB} completely within the projective category. 
The irreducible Calabi--Yau or irreducible symplectic factors are not unique.
By~\cite[Corollary 13.3]{GGK:klt}, when the dimension is even, they are simply connected.
As observed in Section~\ref{subsec:Def}, we can choose the irreducible symplectic factor to be strictly irreducible and, if Conjecture~\ref{conj:Rapagnetta} is true, we could choose it so that its regular locus is simply connected.
In such a case, by Proposition~\ref{prop:IHSisHK}, any $\Q$-factorial terminalization is a projective hyper-K\"ahler variety.
Hence, from the viewpoint of classification of projective varieties, Theorem~\ref{thm:BB} gives one of the main evidence to study hyper-K\"ahler varieties and their classification.
In the smooth case, irreducible symplectic manifolds are hyper-K\"ahler manifolds.

\subsection{Examples}\label{subsec:Examples}

A hyper-K\"ahler variety in dimension~$2$ is always smooth and it is nothing but a {\sf K3 surface}.\footnote{In this paper, to be coherent with the main definition, all K3 surfaces are smooth and projective.}
By a celebrated result by Kodaira~\cite{Kod:DefoK3}, all K3 surfaces are deformation-equivalent. They have $b_2=22$.

To get other examples of hyper-K\"ahler varieties, starting from a K3 surface $S$, we can look at its Hilbert scheme of points $S^{[n]}$, for $n\geq1$: these give examples of projective\footnote{Analytic as well, if we allow the K3 surface to be analytic.} hyper-K\"ahler manifolds of dimension $2n$~\cite{Bea:c10}. They are all deformation-equivalent: a hyper-K\"ahler manifold deformation-equivalent to a Hilbert scheme of points on a K3 surface will be called of K3$^{[n]}$-{\sf type}.
These examples have $b_2=23$ and Fujiki constant~1.
Similarly, in \emph{loc.~cit.}, another class of examples of hyper-K\"ahler manifolds in all dimension has been constructed by starting from an abelian surface $A$, considering its Hilbert scheme of points $A^{[n+1]}$, and then the kernel $K_n(A)$ of the summation morphism $A^{[n+1]}\to A$.
These varieties, called {\sf generalized Kummer varieties}, have again dimension $2n$, they are all deformation-equivalent, but in a different deformation class with respect to those of K3$^{{[n]}}$-type. We call them of Kum$_{n}$-{\sf type}.
They have $b_2=7$ and Fujiki constant~$n+1$.

These two constructions are difficult to generalize directly, unless we reinterpret them as moduli spaces of stable sheaves (on lower dimensional K-trivial varieties), or look at their Lagrangian fibration structure, or as group quotients.
In fact, these generalizations have been the main strategy to construct examples of hyper-K\"ahler varieties in the past years.
In this section we will analyze in detail moduli spaces on K3 surfaces; the first steps towards a general theory in higher dimension are in Section~\ref{sec:atomic}.
Compactifications of Lagrangian fibrations will be treated in detail in Section~\ref{sec:Lagrangian}.
Group quotients have been studied in detail in lower dimension, the idea being to find a symplectic group action on a hyper-K\"ahler variety and then take its terminalization. 
This approach has been first developed in~\cite{Fuj:Primitive}, and then extended in works by many people (see, e.g.,~\cite{FM:Betti,BGMM:Terminalization} for the latest results in dimension~$4$).
We will not touch on these constructions in this paper, and we refer to~\cite{Per:Survey} for a more detailed survey.
A 20-dimensional (singular) new example has been recently announced in~\cite{LY:Quotient}; it would be interesting to compare this with the constructions in Section~\ref{sec:Lagrangian}.

\subsubsection*{The Mukai structure}
Let $S$ be a smooth projective K3 surface over $\C$.
We start by recalling the Mukai Hodge structure on the full singular cohomology of $S$
\begin{equation}\label{eq:CohomK3}
H^*(S,\Z)=H^0(S,\Z)\oplus H^2(S,\Z)\oplus H^4(S,\Z).    
\end{equation}
This is a torsion-free abelian group of rank~24.
We often denote an element $\bv=(v_0,v_1,v_2)\in H^*(S,\Z)$ with respect to the decomposition~\eqref{eq:CohomK3}. 
We define the {\sf Mukai pairing} on $H^*(S,\Z)$ by
\[
(\bv,\bv')\coloneqq\int_S \left(v_1\cup v_1' - v_0v_2'-v_2v_0'\right)\in\Z,\qquad \bv,\bv'\in H^*(S,\Z).
\]
We denote
\[
\widetilde{\Lambda}_\mathrm{K3}\coloneqq \left(H^*(S,\Z),(-,-)\right)
\]
the corresponding lattice; it is isometric to $U^{\oplus 4}\oplus E_8(-1)^{\oplus 2}$ as a lattice, but we will not need this fact in this paper except that it is \emph{even}.
Finally, we endow this with the weight~2 Hodge structure coming from the one of $S$:
\[
H^{2,0}(S)\longhookrightarrow H^2(S,\C)\longhookrightarrow H^*(S,\C). 
\]
The {\sf Mukai structure} is the triple
\[
\widetilde{H}(S,\Z)\coloneqq \left(H^*(S,\Z),(-,-),H^{2,0}(S)\right).
\]
We notice that, with respect to this Hodge structure, we have
\[
\widetilde{H}_\mathrm{alg}(S,\Z)\coloneqq\widetilde{H}^{1,1}(S)=H^0(S,\Z)\oplus \NS(S)\oplus H^4(S,\Z),
\]
where $\NS(S)$ denotes the N\'eron--Severi group of $S$.

Let us denote by $\Db(S)\coloneqq\Db(\coh(S))$ the bounded derived category of coherent sheaves on $S$. For an object $E\in\Db(S)$, we can define its {\sf Mukai vector}
\[
v(E)\coloneqq\ch(E).\sqrt{\td_S}\in \widetilde{H}_\mathrm{alg}(S,\Z).
\]
This extends to a group homomorphism $v\colon K(S)\to \widetilde{H}_\mathrm{alg}(S,\Z)$, form the Grothendieck group of $S$.
The Hirzebruch--Riemann--Roch Theorem then becomes
\begin{equation}\label{eq:HRR}
\chi_S(E,E')\coloneqq\sum_j (-1)^j \dim_\C \Hom_{\Db(S)}(E,E'[j])=-(v(E),v(E')),
\end{equation}
for all $E,E'\in\Db(S)$.

\begin{rema}
The Mukai structure naturally appears in the so called \emph{Derived Torelli Theorem} which detects when two K3 surfaces have equivalent derived categories.
We will not need this, but it is important in the development of the theory.
We refer to~\cite{Muk:Moduli,Orl:K3,HMS:Orientation} for more details.
\end{rema}

\subsubsection*{Bridgeland stability conditions and moduli spaces}
The setting to state the result about moduli spaces of stable sheaves on K3 surfaces is actually Bridgeland stability~\cite{Bri:Stab} on $\Db(S)$.
We will not need the full definition.
For us a {\sf Bridgeland stability condition} on $\Db(S)$ is a pair $\sigma=(Z,\cP)$, where
\[
Z\colon \widetilde{H}_\mathrm{alg}(S,\Z) \longrightarrow \C
\]
is a group homomorphism (called the {\sf central charge}) and
\[
\cP=\bigcup_{\phi\in\R}\cP(\phi)\subset\Db(S)
\]
is a full subcategory with the compatibility
\[
Z(v(E))\in\R_{>0}\,e^{i\pi\phi}, \quad\text{ for all }\phi\in\R.
\]
Objects in $\cP(\phi)$ are called $\sigma$-{\sf semistable} of phase $\phi$.
Bridgeland stability conditions are required to satisfy several properties, including the existence of Harder--Narasimhan filtrations in semistable objects, the support property (which in particular implies that $\sigma$-{\sf stable} objects are well defined and behave as expected)\footnote{The category $\cP(\phi)$ is an abelian category, once we add the trivial object. The support property implies that it is Artinian and Noetherian; stable objects are the simple objects in this category. See~\citestacks{0FCD} for the basic definitions regarding abelian categories and the Jordan--H\"older property. The support property was introduced in~\cite{KS:Stability}, complementing the original definition of locally finite stability condition in~\cite[Definition 5.7]{Bri:Stab} (see also~\cite[Proposition B.4]{BM:LocalP2}).}, and boundedness and openness for an object to be in $\cP(\phi)$.
We do not need the explicit form of these conditions, for which we refer to~\cite{Bri:Stab,BLMNPS:Family,MS:Lectures}.
The main consequence of the definition is that the set of stability conditions $\Stab(S)$ has naturally a topology for which the forgertful morphism $(Z,\cP)\mapsto Z$ is a local homeomorphism. In particular, $\Stab(S)$ has the structure of complex manifold of dimension $\rk(\widetilde{H}_\mathrm{alg}(S,\Z))=\rho(S)+2$ and we can explicitly describe the image of the forgetful morphism (see~\cite{Bri:Stab,BB:Pic1,Bay:ShortProof}).

Another key point is that, by fixing $\phi\in\R$ and a vector $\bv\in\widetilde{H}_\mathrm{alg}(S,\Z)$, the set of (S-equivalence classes of) $\sigma$-semistable objects in $\cP(\phi)$ with Mukai vector $\bv$ has a natural structure of proper algebraic space over $\C$ (see~\cite[Theorem 1.3]{BM:Projectivity} and~\cite{Alper:GoodModuli,AHLH:Moduli}). 
We will denote this by $M_{S,\sigma}(\bv)$, by dropping the reference to the phase.
The open subscheme parameterizing $\sigma$-stable objects will be denoted by $M_{S,\sigma}^\mathrm{st}(\bv)$.

We denote by $\Stab^\dagger(S)$ the connected component of the space of Bridgeland stability conditions on $\Db(S)$ described in~\cite{Bri:K3}.
The main result is the following.
It comes out of the work of many people, starting from Beauville, Mukai, Kuleshov, O'Grady, Huybrechts, Yoshioka, and Perego--Rapagnetta in the case of stable sheaves~\cite{Bea:c10,Muk:Moduli,OG:Moduli,Huy:Basic,Yos:Abelian,OG10,PR:K3}.
The last version on stability conditions has been established in~\cite{Toda:K3,BM:Projectivity,BM:MMP,AS:Update}.
A direct proof, by using Bridgeland stability conditions from the beginning, is in~\cite{Bot:Yoshioka}.

Let us fix a vector $\bv=m\bv_0\in\widetilde{H}_\mathrm{alg}(S,\Z)$, for $m\geq1$, where $\bv_0$ is primitive.
The first part of the result is a nonemptiness statement.
First of all, we observe that an elementary application of~\eqref{eq:HRR} and Serre duality shows that if $E\in M_{S,\sigma}^\mathrm{st}(\bv)$, then $\bv^2\geq-2$:
\begin{equation}\label{eq:SerreDualityChi}
\bv^2 = -\chi_S(E,E) = -\underbrace{\hom_S(E,E)}_{=1}+\underbrace{\ext^1_S(E,E)}_{\geq0}-\underbrace{\ext^2_S(E,E)}_{=\hom_S(E,E)} \geq -2.    
\end{equation}

The key point is that a converse holds (see, e.g.,~\cite[Theorem 2.15]{BM:MMP}):

\begin{theo}[Mukai--Kuleshov--O'Grady--Yoshioka]\label{thm:NonEmpty}
If $\bv_0^2\geq-2$, then $M_{S,\sigma}(\bv)\neq\emptyset$, for all $\sigma\in\Stab^\dagger(S)$. 
\end{theo}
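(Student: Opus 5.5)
It suffices to find a single $\sigma$-semistable object of class $\bv$. The plan is to build it as a direct sum $E_0^{\oplus m}$, where $E_0$ is $\sigma$-semistable of Mukai vector $\bv_0$. This works because $\cP(\phi)$ is closed under direct sums. So we reduce to the primitive case $m=1$ and prove $M_{S,\sigma}(\bv_0)\neq\emptyset$ for every $\sigma\in\Stab^\dagger(S)$ and every primitive $\bv_0$ with $\bv_0^2\geq -2$.

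For the primitive case, the strategy is deformation plus wall-crossing.

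\emph{Step 1 (independence of $\sigma$).} Fix $\bv_0$. The set of $\sigma\in\Stab^\dagger(S)$ for which there is a $\sigma$-semistable object of class $\bv_0$ is closed, by closedness of the semistable locus in families of stability conditions. It is also open: the support property gives a locally finite wall-and-chamber structure for $\bv_0$, and semistable objects of class $\bv_0$ persist across walls. At a wall, a semistable object has a Jordan--H\"older filtration. On the other side, a semistable object of the same class can be produced by iterated extensions of the JH factors in reverse order. These extensions are nonzero because $\chi(A,B)<0$ for the relevant factors, which follows from~\eqref{eq:HRR} and the positivity of the pairings of classes of aligned phases. Since $\Stab^\dagger(S)$ is connected, nonemptiness at one $\sigma$ gives nonemptiness everywhere.

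\emph{Step 2 (large volume limit).} Choose $\sigma$ in the Gieseker chamber. There, for $\bv_0$ of positive rank (after shifting and possibly applying a derived dual or spherical twist to reduce to that case), $\sigma$-semistable objects are exactly the twisted Gieseker semistable sheaves. We must then show that the moduli space of Gieseker semistable sheaves with primitive $\bv_0$, $\bv_0^2\geq-2$, and generic polarization is nonempty.

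\emph{Step 3 (deformation to a model case).} This is Yoshioka's argument. The moduli space $M_H(\bv_0)$ deforms in a smooth proper family over deformations of $(S,H)$ keeping $\bv_0$ algebraic, with relative dimension $\bv_0^2+2$. Nonemptiness is therefore invariant under such deformations, since the relative moduli space is proper and, by the $\ext^2$ computation in~\eqref{eq:SerreDualityChi}, smooth over the base. So we may deform to an elliptic K3 with a section, or to a K3 of Picard rank one, where $\bv_0$ becomes a convenient vector. We then apply Fourier--Mukai transforms, namely relative Fourier--Mukai on the elliptic fibration composed with twists by line bundles. These preserve the Mukai pairing and identify $M(\bv_0)$ with $M(\bv_1)$, where $\bv_1=(1,0,1-n)$ and $n=\bv_0^2/2+1$. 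The identification holds for a suitable polarization, and transformed stable sheaves stay stable for generic polarization. Finally, $M(1,0,1-n)\cong S^{[n]}\times\Pic^0(S)=S^{[n]}$ is nonempty. For $\bv_0^2=-2$ the target is $\cO_S$, a point.

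The main obstacle is Step 3. The hard part is checking that the relative Fourier--Mukai transform sends stable sheaves to stable sheaves, up to shift, for suitable polarizations. This is where the genericity of the polarization and the elliptic structure are used. The first approach to try is Yoshioka's reduction by induction on the rank via the elliptic fiber degree. Step 1 is comparatively formal given the support property.
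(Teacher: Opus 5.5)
There is a genuine gap, and it is in the openness half of Step~1, not in Step~3. The paper itself only cites \cite[Theorem~2.15]{BM:MMP} for this statement. Your other steps are fine: the reduction to $m=1$ via $E_0^{\oplus m}$, the closedness half of Step~1, and Steps~2--3, which together are Yoshioka's theorem in the Gieseker chamber and can simply be quoted. You call openness formal, but it is exactly where the non-formal content lies. Your argument there never uses $\bv_0^2\geq-2$, and without that hypothesis openness is false.

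Here is a counterexample. Take $S$ with two disjoint $(-2)$-curves $C_1,C_2$ and an ample class $\omega$ with $\omega\cdot C_1=\omega\cdot C_2$; for instance, the resolution of a double plane branched along a two-nodal sextic. Let $\sigma_0\in\Stab^\dagger(S)$ be a large-volume stability condition with $B$-field $0$ and polarization $t\omega$, $t\gg0$.
\begin{itemize}
\item $\cO_{C_1}$ and $\cO_{C_2}$ are $\sigma_0$-stable (cf.\ Remark~\ref{rmk:Gieseker}) with equal central charges.
\item So $\cO_{C_1}\oplus\cO_{C_2}$ is $\sigma_0$-semistable of class $\bv_0=(0,C_1+C_2,2)$. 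This class is primitive, since $\bv_0^2=-4$ and the Mukai lattice is even.
\item For generic $\sigma$ near $\sigma_0$ there is no semistable object of class $\bv_0$: it would be stable, contradicting \eqref{eq:SerreDualityChi}.
\end{itemize}
What breaks is your claim that $\chi(A,B)<0$. Here $\chi(\cO_{C_1},\cO_{C_2})=-C_1\cdot C_2=0$, so $\Ext^1$ vanishes in both directions. Classes aligned at a wall span a hyperbolic lattice that can contain spherical classes, and they need not pair positively; for instance $(v(\cO_{C_1}),\bv_0)=-2$.

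The same problems persist when $\bv_0^2\geq-2$:
\begin{itemize}
\item A given semistable object may have a spherical JH factor orthogonal to the others.
\item With repeated factors, say $A^{\oplus 2}$ and $B$ with $\ext^1(A,B)=1$, every extension of $A^{\oplus 2}$ by $B$ has $A$ as a direct summand, so it is not $\sigma_+$-semistable.
\item At a totally semistable wall, every $\sigma_-$-stable object of class $\bv_0$ is $\sigma_+$-unstable. The objects on the far side must therefore be produced by other means.
\end{itemize}

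What is missing is a genuine input for passing between chambers. The standard one is Toda's theorem \cite{Toda:K3}: the counting invariants of $\sigma$-semistable objects do not depend on $\sigma\in\Stab^\dagger(S)$. This holds because Joyce's wall-crossing formula becomes trivial when the Euler form is symmetric, as it is on a K3 surface. For primitive $\bv_0$ and generic $\sigma$ the invariant computes, up to sign, the Euler characteristic of $M_{S,\sigma}(\bv_0)$. It therefore equals its value in the Gieseker chamber, the Euler characteristic of $S^{[n]}$, which is nonzero.

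Alternatively, you can run an honest wall-crossing induction on $\bv_0^2$, as in the direct proof \cite{Bot:Yoshioka}. There, the hypothesis $\bv_0^2\geq-2$ and nonemptiness for the smaller classes appearing on each wall are what produce objects on the far side. Once nonemptiness is known for all generic $\sigma$ and primitive $\bv_0$, your closedness and direct-sum arguments do give the statement as written.
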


We can be more precise about stable objects as well.
Let us assume from now on that $\bv_0^2\geq2$ (the cases $\bv_0^2=-2,0$ are slightly special: see Remark~\ref{rmk:-20}).
Let $\sigma\in\Stab^\dagger(S)$ be a $\bv$-\emph{generic} stability condition, namely with the property that $M_{S,\sigma}(\bv_0)=M_{S,\sigma}^\mathrm{st}(\bv_0)$ and the only properly semistable objects in $M_{S,\sigma}(\bv_0)$ are S-equivalent to direct sums of objects in $M_{S,\sigma}(\bv_0)$.
Under our assumption, there is an open subset of $\Stab(S)$ of stability conditions which are $\bv$-generic.
Then a precise form of Theorem~\ref{thm:NonEmpty} is: $M_{S,\sigma}^\mathrm{st}(\bv)\neq\emptyset$, for $\sigma\in\Stab^\dagger(S)$ a $\bv$-generic stability condition.

The second part of the result is that these moduli spaces give examples of projective hyper-K\"ahler varieties in any dimension.\footnote{They also come with a natural polarization $\ell_\sigma(\bv)$ (see~\cite[Theorem 1.3]{BM:Projectivity}).}
There are three cases to consider: the case in which $m=1$, O'Grady's celebrated example, and all the rest.

We first deal with the case $m=1$.
This is~\cite[Thoerem 8.1]{Yos:Abelian} (see~\cite[Theorem 6.10 \& Section 7]{BM:Projectivity}, for the statement below in the context of Bridgeland stability conditions).

\begin{theo}[Beauville--Mukai--Huybrechts--O'Grady--Yoshioka]\label{thm:Yoshioka}
Let us assume $\bv=\bv_0$, namely $m=1$, and $\bv^2\geq2$.
Let $\sigma\in\Stab^\dagger(S)$ be a $\bv$-generic stability condition.
Then $M\coloneqq M_{S,\sigma}(\bv_0)=M_{S,\sigma}^\mathrm{st}(\bv_0)$ is a smooth projective hyper-K\"ahler manifold of K3$^{[n]}$-type, where $n\coloneqq(\bv^2+2)/2$.
Moreover, we have an Hodge isometry
\[
H^2(M,\Z)\xlongrightarrow{\cong}\bv^{\perp}\subset\widetilde{H}(S,\Z),
\]
where on the left hand side we are considering $H^2(M,\Z)$ with its natural weight $2$ Hodge structure and the Beauville--Bogomolov--Fujiki form, and on the right hand side $\bv^{\perp}$ with the induced Mukai structure from $\widetilde{H}(S,\Z)$.
\end{theo}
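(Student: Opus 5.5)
The proof will follow the classical Mukai--Huybrechts--O'Grady--Yoshioka strategy, adapted to Bridgeland stability: first establish the local structure of $M$, then identify its deformation type, and finally compute $H^2$ via the Mukai homomorphism.

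\emph{Smoothness and the symplectic form.} Since $\sigma$ is $\bv$-generic and $\bv=\bv_0$ is primitive, there are no strictly semistable objects, so $M=M_{S,\sigma}^\mathrm{st}(\bv)$. By~\cite{BM:Projectivity}, $M$ is a projective coarse moduli space; it is a $\mathbb{G}_m$-gerbe quotient of the moduli stack. For $E\in M$, stability gives $\hom(E,E)=1$, and Serre duality gives $\ext^2(E,E)=1$. The trace map $\Ext^2(E,E)\to H^2(S,\cO_S)$ is then an isomorphism, and the obstructions lie in its kernel by Mukai's argument. Hence $M$ is smooth of dimension
\[
\ext^1(E,E)=\bv^2+2=2n,
\]
by~\eqref{eq:SerreDualityChi}. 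The Yoneda pairing $\Ext^1(E,E)\times\Ext^1(E,E)\to\Ext^2(E,E)\cong\C$ defines a nondegenerate $2$-form on $T_EM$. It is closed because it can be written via the Atiyah class as $\Tr(\mathrm{At}^2)$ composed with $H^{2,0}(S)$, following Mukai. At this stage $M$ is only holomorphic symplectic; simple connectedness and $h^{2,0}=1$ remain to be proved.

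\emph{Reduction to Hilbert schemes.} Both remaining properties, and the fact that $M$ is of K3$^{[n]}$-type, follow once $M$ is deformation-equivalent to $S'^{[n]}$ for some K3 surface $S'$. I would prove this in three steps:
\begin{enumerate}[(1)]
\item Deform $(S,\bv,\sigma)$ in families of K3 surfaces over $\Stab^\dagger$, using relative moduli spaces as in~\cite{BLMNPS:Family}. Wall-crossing inside $\Stab^\dagger(S)$ changes $M$ only by birational maps between smooth projective holomorphic symplectic varieties (\cite{BM:MMP}). Such varieties are deformation-equivalent, by Huybrechts' result once we know the target is hyper-K\"ahler, which is an inductive consistency check.
\item Deform $S$ until $\bv$ becomes the vector of a Gieseker-type chamber for which a Fourier--Mukai autoequivalence, or a derived equivalence with another K3 surface, sends $\bv$ to $(1,0,1-n)$. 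This uses that the orthogonal group of $\widetilde{\Lambda}_\mathrm{K3}$ acts transitively on primitive vectors of fixed square, by Eichler's criterion, together with the realizability of Hodge isometries by derived equivalences on a suitable deformation (\cite{Muk:Moduli,Orl:K3}).
\item Equivalences map $\sigma$-stable objects to $\Phi(\sigma)$-stable ones, giving an isomorphism of moduli spaces. For $(1,0,1-n)$ in the large volume limit, the moduli space is the moduli of ideal sheaves, namely $S^{[n]}$.
\end{enumerate}
This is Yoshioka's argument, recast as in~\cite{Bot:Yoshioka}.

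\emph{Hodge isometry.} Using a (quasi-)universal family $\cE$ on $S\times M$, define the Mukai homomorphism
\[
\theta(\bw)=\Bigl[p_{M*}\bigl(\ch(\cE)\sqrt{\td_S}\,\bw^\vee\bigr)\Bigr]_1,
\]
suitably normalized for a quasi-universal family. It is a morphism of Hodge structures. For $S^{[n]}$ one checks directly (Beauville) that $\theta|_{\bv^\perp}$ is an isometry onto $H^2$ with the BBF form. Compatibility of $\theta$ with derived equivalences, wall-crossing and deformation then transports this to $M$.

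\emph{Main obstacle.} The delicate point is the deformation/wall-crossing step: one must ensure that every $\bv$-generic chamber is connected to the Hilbert scheme chamber through smooth moduli spaces. One must also check that $\theta$ is integral, which requires handling twisted quasi-universal families, and that it remains an isometry across walls. I would first try to rely on~\cite[Theorem 1.2]{BM:MMP}, where the birational models are identified via $\theta$ itself.
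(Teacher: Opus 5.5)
\paragraph{Verdict.} The local part of your proposal is fine. The global reduction to $S^{[n]}$, as you have justified it, has a genuine gap: it is circular.

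\paragraph{The local part.} Your argument is in substance what the paper itself sketches. The paper phrases it through the quadratic obstruction $f\mapsto f[1]\circ f$, which the trace kills by skew-symmetry. You instead use that obstructions are traceless while $\Tr\colon\Ext^2(E,E)\to H^2(S,\cO_S)$ is an isomorphism. Either works. For everything global the paper only refers to \cite{Yos:Abelian}, \cite[Theorem~6.10 \& Section~7]{BM:Projectivity} and \cite{Bot:Yoshioka}.

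\paragraph{Where the circularity sits.} The problem is step~(1) together with your ``main obstacle'' paragraph. The wall-crossing results of \cite{BM:MMP} include two things you use. The first is the birationality of $M_{\sigma_+}(\bv)$ and $M_{\sigma_-}(\bv)$ across a wall. The second, a fortiori, is Theorem~1.2 there, which identifies chambers and birational models through $\theta$. Both take as input exactly the Bridgeland version of the statement you are proving: for every generic $\sigma$, $M_\sigma(\bv)$ is irreducible of dimension $\bv^2+2$ and $\theta\colon\bv^\perp\to H^2$ is a Hodge isometry. They also use nonemptiness, both for $\bv$ and for the classes in the rank-two lattice attached to the wall.

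Your reduction needs wall-crossing in two places. First, to reach a chamber where the deformation and the equivalence of step~(2) can be applied. Second, afterwards, to move $\Phi(\sigma)$ into the Gieseker chamber of $(1,0,1-n)$ on $S'$. So the whole passage to $S^{[n]}$ currently rests on the theorem itself.

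\paragraph{What is missing, and how to supply it.} The point you call an ``inductive consistency check'' is harmless. Once $M_{\sigma_-}(\bv)$ is smooth, projective and birational to a hyper-K\"ahler manifold, it is hyper-K\"ahler, because $\pi_1$ and $H^0(\Omega^2)$ are birational invariants. Then \cite[Theorem~4.6]{Huy:Basic} applies.

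What is missing is an independent proof of birationality across walls. Two routes are available.
\begin{itemize}
\item \emph{Self-contained wall-crossing.} Set up an induction and bound the dimension of the loci destabilized at a wall using the moduli of the destabilizing factors. Their Mukai vectors lie in the rank-two lattice of the wall, and you must arrange the induction so that the result is available for them. Since $M_{\sigma_\pm}(\bv)$ are smooth of pure dimension $2n$ by your local argument, bounds $<2n$ on both sides give a common dense open subset. Then irreducibility and birationality pass through every wall that is not totally semistable. Totally semistable walls need a separate treatment, e.g.\ by spherical or isotropic twists. This is the kind of analysis a direct Bridgeland-theoretic proof such as \cite{Bot:Yoshioka} has to supply.
\item \emph{Yoshioka's original route, avoiding walls.} Stay in Gieseker chambers. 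Deform $(S,H)$, keeping $\bv$ algebraic and $H$ generic, to an elliptic K3 surface with a section. Reduce the rank by relative Fourier--Mukai transforms down to $S^{[n]}$. Only then pass to arbitrary generic $\sigma$ as in \cite[Section~7]{BM:Projectivity}.
\end{itemize}

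\paragraph{A further point in step~(2).} Whichever route you take, you must keep the stability condition $\bv$-generic on the special fibre, where the isotropic class $\mathbf{w}$ with $(\bv,\mathbf{w})=-1$ becomes algebraic. New walls for $\bv$ appear there, and otherwise the relative moduli space need not be smooth and proper over the base. Granting these points, your transport of $\theta$ along isomorphisms, smooth proper families and birational maps is fine.
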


The smoothness of the moduli space $M=M_{S,\sigma}(\bv)$ at a point $E\in\Db(S)$ is controlled by its tangent space, $\Ext^1(E,E)$, and by the obstruction morphism into $\Ext^2(E,E)$.
In the K3 surface case, this morphism is quadratic and coincides with the Yoneda composition
\[
\Ext^1(E,E)\longrightarrow\Ext^2(E,E)\qquad f\longmapsto f[1]\circ f.
\]
Hence, if $E$ is $\sigma$-stable, then $\Ext^2(E,E)$ is $1$-dimensional, as remarked already in~\eqref{eq:SerreDualityChi}, and it is a general fact that the Yoneda pairing is skew-symmetric, when composed with the trace morphism
\[
\Tr\colon\Ext^2(E,E)\xlongrightarrow{\cong} H^2(S,\cO_S)\cong\C
\]
(more details about the trace morphism will be in Section~\ref{subsec:Hyperholomorphic}).
Hence, the fact that $M$ is smooth and symplectic in Theorem~\ref{thm:Yoshioka} follows from general deformation theory and Serre duality (see~\cite{Muk:Symplectic}).
In general, the same approach allows us to study the singularities of the moduli space as well, in Theorem~\ref{thm:OG10} and Theorem~\ref{thm:PR} below. In higher dimensions, this is more complicated: we will see how to deal with this in Section~\ref{sec:atomic}.

The next case is O'Grady's exceptional example.
This is~\cite{OG10} (the version for stability conditions is~\cite[Theorem 1.1]{LPZ:Elliptic}\footnote{To be precise in \emph{loc.~cit.~}the authors prove the result for the Kuznetsov component associated to a cubic fourfold. Their proof in particular adapts to Bridgeland stability conditions on K3 surfaces.}).

\begin{theo}[O'Grady]\label{thm:OG10}
Let us assume that $\bv=2\bv_0$ and $\bv_0^2=2$.
Then, for a $\bv$-generic stability condition $\sigma\in\Stab^\dagger(S)$, the moduli space $M_{S,\sigma}(\bv)$ admits a resolution of singularities $\widetilde{M}$ which is a smooth projective hyper-K\"ahler manifold of dimension~10. 
\end{theo}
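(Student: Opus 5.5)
The plan follows O'Grady's strategy: first describe the singularities of $M\coloneqq M_{S,\sigma}(\bv)$ via deformation theory and Kaledin--Lehn--Sorger type local models, then blow up the singular locus, and finally check that the result is symplectic and irreducible.

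\emph{Stratification.} Since $\sigma$ is $\bv$-generic and $\bv=2\bv_0$, a point of $M$ is either a stable object $E$ with $v(E)=\bv$, or polystable of the form $F_1\oplus F_2$ with $F_i\in M_{S,\sigma}(\bv_0)$. By Theorem~\ref{thm:Yoshioka}, $M_{S,\sigma}(\bv_0)$ is a smooth hyper-K\"ahler fourfold $Y$ of K3$^{[2]}$-type. This gives a stratification
\[
M^\mathrm{st}\sqcup\big(\Sym^2Y\setminus\Delta_Y\big)\sqcup\Delta_Y.
\]
Here $\dim M=\bv^2+2=10$, the singular locus is $\Sigma\cong\Sym^2 Y$ of dimension $8$, and the deepest stratum is $\Delta_Y\cong Y$, of dimension $4$.

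\emph{Local analysis.} By Theorem~\ref{thm:NonEmpty} and its precise form, the stable locus is nonempty. There, $M$ is smooth and symplectic by Mukai's argument recalled after Theorem~\ref{thm:Yoshioka}. At a polystable point $F$, I would use that the moduli space is formally modeled by the quotient $\kappa^{-1}(0)/\!\!/\Aut(F)$, where $\kappa\colon\Ext^1(F,F)\to\Ext^2_0(F,F)$ is the quadratic Yoneda map (formality of the dg algebra $\mathrm{RHom}(F,F)$ for polystable objects on a K3 surface). This yields two local models:
\begin{enumerate}[{\rm (i)}]
\item for $F=F_1\oplus F_2$ with $F_1\not\cong F_2$, a transversal $A_1$ singularity along $\Sigma$;
\item for $F=F_0^{\oplus 2}$, the model $\kappa^{-1}(0)/\!\!/\mathrm{PGL}_2$ with $\Ext^1=\mathfrak{gl}_2\otimes\C^4$, which is O'Grady's and Kaledin--Lehn--Sorger's model $\overline{\mathcal{O}}\subset\mathfrak{sp}_4$-type singularity.
\end{enumerate}
Kaledin--Lehn--Sorger show that blowing up the reduced singular locus, $\widetilde{M}\coloneqq\mathrm{Bl}_{\Sigma}M$, gives a symplectic resolution locally. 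Hence $\widetilde{M}$ is smooth, and the pullback of the symplectic form on $M^{\mathrm{st}}$ extends to a nondegenerate form on $\widetilde{M}$. Here one uses Lemma~\ref{lem:HKsymplectic}-type extension (rational singularities plus Kebekus--Schnell) together with a crepancy check in both local models.

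\emph{Global properties.} It remains to show that $\widetilde{M}$ is simply connected and $h^{2,0}=1$. My approach would be to reduce to a special K3 surface by deformation. Locally trivial deformations of $(S,\sigma)$ inside $\Stab^\dagger$ keep $M$ and its blow-up in a smooth family, so any one choice suffices. I would then take $S$ to be a degree $2$ K3 surface and $\sigma$ Gieseker stability, as in O'Grady's original computation. There $M$ is birational to a fibration whose general fibre is an abelian fivefold (a compactified Jacobian of curves in $|2H|$), or one relates it directly to O'Grady's space.

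Simple connectivity then follows by showing $\pi_1(M^{\mathrm{st}})=1$ via the fibration, plus codimension of the exceptional loci. For $h^{2,0}=1$, the stable locus carries a unique $2$-form induced by $H^{2,0}(S)$ via the Mukai construction, and $h^{1,0}=0$ follows from simple connectivity. This global step is the main obstacle: the local analysis is formal, while the topology of $\widetilde{M}$ requires a concrete geometric model, and transporting it along deformations of stability conditions requires wall-crossing control (via~\cite{BM:MMP}) showing that the birational models are connected by locally trivial families.
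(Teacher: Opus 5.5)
\textbf{Local part.} Your local analysis is sound. It consists of formality at polystable points, the two Kaledin--Lehn--Sorger local models, Lehn--Sorger's single blow-up of the reduced singular locus, and crepancy to make the extended Mukai form nondegenerate. This is the standard input; the paper itself gives no argument and only cites \cite{OG10} and, for stability conditions, \cite{LPZ:Elliptic}.

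\textbf{The gap: the global step.} This step is exactly the content of O'Grady's theorem, and your sketch does not establish it.

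Your argument for $h^{2,0}(\widetilde M)=1$ only proves existence. Mukai's construction produces one holomorphic symplectic form, but nothing you say excludes further ones, for instance $\widetilde M$ splitting as a product of two hyper-K\"ahler manifolds.

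Likewise, `$\pi_1(M^{\mathrm{st}})=1$ via the fibration' stands in for a monodromy/vanishing-cycle computation that you do not give. It also aims at more than you need. Since $M^{\mathrm{st}}=\widetilde M\setminus E$ is the complement of a proper analytic subset, $\pi_1(M^{\mathrm{st}})$ surjects onto $\pi_1(\widetilde M)$, so $\pi_1(M^{\mathrm{st}})=1$ would suffice. But proving it would also require killing the meridians around $\Sigma$, which have order $2$ locally because of the transversal $A_1$ singularities. The codimension of the exceptional locus plays no role here; in fact $E$ is a divisor.

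\textbf{Closing the gap.} Either quote O'Grady's theorem for a degree-$2$ K3 surface and $\bv=(2,0,-2)$ as the base case, or argue on the model $\bv=(0,2H,2j)$. There $\widetilde M\to M\to|2H|\cong\P^5$ is a regular fibration whose general fibre is the Jacobian of a genus-$5$ curve. Once $\pi_1(\widetilde M)=1$ is proved, the argument runs as follows.
\begin{enumerate}[{\rm (a)}]
\item Beauville--Bogomolov makes $\widetilde M$ a product of irreducible hyper-K\"ahler manifolds. There is no Calabi--Yau factor, because the $2$-form is nondegenerate.
\item A product $X_1\times X_2$ cannot fibre onto $\P^5$ with abelian general fibre. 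The pulled-back line bundle would be $L_1\boxtimes L_2$, so the base would be $B_1\times B_2$. Since $\rho(\P^5)=1$, some $L_i$ is trivial, and then a positive-dimensional hyper-K\"ahler factor $X_i$ lies inside every fibre, which is impossible.
\end{enumerate}

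\textbf{Transport to arbitrary $(S,\sigma)$.} Moving $\sigma$ across a wall is a birational modification, not a deformation. You do not need `locally trivial families connecting birational models'; for these non-terminal $M$ the birational-implies-deformation-equivalent result of \cite{BL:GlobalModuli} does not apply anyway. What you need is:
\begin{itemize}
\item $\pi_1$ and $h^{p,0}$ are birational invariants of smooth projective varieties, so it suffices that the moduli spaces on the two sides of a wall are birational. For $\bv=2\bv_0$ this is the O'Grady-type extension of \cite{BM:MMP} due to Meachan--Zhang.
\item Deformation invariance along locally trivial families of moduli spaces over deformations of $S$, as in Perego--Rapagnetta's work on deformations of O'Grady moduli spaces. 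There the relative blow-ups of the singular locus form smooth proper families.
\end{itemize}
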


The second cohomology group of $\widetilde{M}$ has been described in~\cite{Rap:OG10}; in particular, $b_2(\widetilde{M})=24$ and $c_{\widetilde{M}}=1$.
There is also a similar description of this, as in Theorem~\ref{thm:Yoshioka} and Theorem~\ref{thm:PR}, in terms of the extended Mukai lattice (see Section~\ref{sec:AtomicObjects} and Theorem~\ref{thm:Bottini}).
Hyper-K\"ahler manifolds arising from Theorem~\ref{thm:OG10} are all deformation equivalent; we call them of OG10-{\sf type}.

Finally, in all other cases, we get examples of projective hyper-K\"ahler varieties, which are not smooth.
This is~\cite[Theorem 1.10]{PR:K3} (based on~\cite{KLS:K3}; for the statement about stability conditions, see~\cite[Sections 5 and 6]{AS:Update}).

\begin{theo}[Perego--Rapagnetta]\label{thm:PR}
Let us assume that $\bv=m\bv_0$, with $m\geq2$ and $\bv_0^2\geq2$, excluding the case in which $m=2$ and $\bv_0^2=2$.
Then, for a $\bv$-generic stability condition $\sigma\in\Stab^\dagger(S)$, the moduli space $M_{S,\sigma}(\bv)$ is a projective hyper-K\"ahler variety of dimension $2n\coloneqq\bv^2+2$.
Moreover, we have an Hodge isometry
\[
H^2(M,\Z)\xlongrightarrow{\sim}\bv^{\perp}\subset\widetilde{H}(S,\Z),
\]
where on the left hand side we are considering $H^2(M,\Z)$ with its natural weight 2 Hodge structure and the Beauville--Bogomolov--Fujiki form, and on the right hand side $\bv^{\perp}$ with the induced Mukai structure from $\widetilde{H}(S,\Z)$.
\end{theo}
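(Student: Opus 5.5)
The approach is to reduce, by deformation and wall-crossing, to a single well-understood model, and then verify the singularity and topological conditions of Definition~\ref{def:HK} by a local and a global argument.

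\textbf{Step 1: reduction to a model case.} Using the connectedness of $\Stab^\dagger(S)$ and deformation of the pair $(S,\bv)$ in families of K3 surfaces, I would first reduce to a convenient model. Varying $\sigma$ among $\bv$-generic stability conditions only changes $M_{S,\sigma}(\bv)$ by birational maps (wall-crossing, as in~\cite{BM:MMP}). Relative moduli spaces over a family of polarized K3 surfaces with a Hodge-class deformation of $\bv_0$ give locally trivial deformations, since the local analytic type of the singularities stays constant. This is the same strategy that proves Theorem~\ref{thm:Yoshioka}. For these reductions I would appeal to the fact, recalled in Section~\ref{subsec:Torelli}, that birational projective hyper-K\"ahler varieties are deformation-equivalent. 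One can therefore take $S$ elliptic or of Picard rank one and $\sigma$ of Gieseker type, so that $M$ is a moduli space of Gieseker-semistable sheaves.

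\textbf{Step 2: local structure and singularities.} Points of $M$ correspond to polystable objects $E=\bigoplus E_i^{\oplus n_i}$ with $v(E_i)=m_i\bv_0$. By the formality results of Budur--Zhang / Arbarello--Saccà (or Kaledin--Lehn--Sorger~\cite{KLS:K3}), the analytic germ of $M$ at $E$ is the germ at $0$ of the quiver variety / symplectic reduction
\[
\Ext^1(E,E)/\!\!/\Aut(E).
\]
Here the quadratic obstruction map is computed as in the smooth case, via Yoneda composition and the trace. From this description:
\begin{itemize}
\item the stable locus is smooth and symplectic by Mukai's argument;
\item the singular locus has codimension at least $4$, except in the excluded case $m=2$, $\bv_0^2=2$;
\item the singularities are symplectic, and they are $\Q$-factorial and terminal, as proved in~\cite{KLS:K3}.
\end{itemize}
Terminality follows from the codimension bound, by a theorem of Namikawa. $\Q$-factoriality follows from local factoriality of these quiver-variety germs, established by Perego--Rapagnetta via a computation of the local class group. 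The symplectic form on $M_\mathrm{reg}$ is the Mukai form. Its uniqueness, $h^{2,0}=1$, will come from Step~3.

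\textbf{Step 3: cohomology and simple connectedness.} I would establish the Hodge isometry $\bv^\perp\cong H^2(M,\Z)$ through the Mukai/Donaldson morphism $\theta_\bv$. Pushing forward along a quasi-universal family gives a morphism $\bv^\perp\to H^2(M_\mathrm{reg},\Z)=H^2(M,\Z)$, the equality holding since $\codim M_\mathrm{sing}\ge4$. To show it is an isometry, compare with the case $\bv_0$ via the stratification $M_{S,\sigma}(\bv_0)^{(m)}\subset M$ given by polystable objects $E_1\oplus\dots\oplus E_m$. Alternatively, deform to a Lagrangian-fibered model: for $S$ elliptic and $\bv_0=(0,f,\chi)$, the moduli space fibers over $|mf|$ and one can compute there. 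Simple connectedness of $M_\mathrm{reg}$ should come from the same model. There, $M$ contains an open set fibered in (compactified) Jacobians of integral curves, and a Lefschetz-type argument on the complement of a codimension-$\ge2$ locus shows $\pi_1(M_\mathrm{reg})$ is generated by loops around the discriminant, which are killed.

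\textbf{Main obstacle.} I expect the hardest parts to be simple connectedness of $M_\mathrm{reg}$ together with integrality of the isometry. The difficulty is controlling $H^2(M,\Z)$ and $\pi_1$ across the singular strata, rather than just rationally. My first attempt would be to use the Lagrangian-fibration model and the theory of~\cite{PR:K3} on $\theta_\bv$ for non-primitive $\bv$.
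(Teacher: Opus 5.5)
The paper gives no argument of its own here. It cites \cite[Theorem 1.10]{PR:K3}, which rests on the local analysis of \cite{KLS:K3}, and \cite{AS:Update} for the Bridgeland version. Your Steps 1--2 reproduce that architecture: Ext-quiver local model, codimension count, Namikawa's criterion, local factoriality, and reduction along locally trivial families. One caveat there: constancy of the analytic type of the fibres does not by itself make the relative moduli space locally trivial; that is something \cite{PR:K3} has to prove.

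The genuine gap is Step~3, which is precisely what \cite{PR:K3} adds to \cite{KLS:K3}. Your Lagrangian model does not satisfy the hypotheses. On an elliptic K3 surface with fibre class $f$, the vector $\bv_0=(0,f,\chi)$ has $\bv_0^2=0$, so $M_{S,\sigma}(m\bv_0)\cong\Sym^m M_{S,\sigma}(\bv_0)$ by Remark~\ref{rmk:-20}. Moreover, every member of $|mf|$ is a sum of $m$ fibres, so there is no open set of sheaves on integral curves.

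A model that does work is $\Pic(S)=\Z\cdot H$ with $H^2=\bv_0^2=2k$ and $\bv_0=(0,H,\chi)$. Then $H$ is $\bv$-generic, and sheaves on integral curves of $|mH|$ are stable, hence smooth points of $M$. The non-integral locus has codimension $2(m-1)k-1$ in $|mH|$, which is at least $2$ exactly when $(m,k)\neq(2,1)$; this is another place where the excluded case enters.

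Even in this model, your $\pi_1$ argument targets the wrong loops. Meridians of the discriminant are harmless, since the fibres over its generic point are reduced. As the base is simply connected, $\pi_1$ of the integral locus is therefore the image of $\pi_1$ of a smooth fibre, i.e.\ of $H_1(C,\Z)$. What must be shown is that this image vanishes. It does, because vanishing cycles of a Lefschetz pencil generate $H_1(C,\Z)$ (as $H_1(S,\Z)=0$), and each one dies in a neighbourhood of the compactified Jacobian of the corresponding one-nodal curve.

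For the isometry, nothing you propose computes the Beauville--Bogomolov--Fujiki form. Restricting to the stratum $\Sym^m M_{S,\sigma}(\bv_0)$, of dimension $m(\bv_0^2+2)<\bv^2+2$, can at best give injectivity of $\theta_\bv$. But $q_M$ is determined through the Fujiki relation by $\int_M\alpha^{2n}$, a top-degree computation on $M$ that this stratum cannot see. You also need $b_2(M)=23$ and surjectivity over $\Z$, which you do not address.

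Moreover, $H^2(M,\Z)=H^2(M_\mathrm{reg},\Z)$ does not follow from $\codim M_\mathrm{sing}\geq4$ alone; it depends on the cohomology of the links of the singular strata. For example, the symplectic singularity $\C^4/\pm1$ has codimension $4$, but its link is $\R\P^7$ with $H^2(\R\P^7,\Z)=\Z/2$. So an additional local argument, using the specific local models, is needed here as well.
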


When fixing the pair $(m,n)$, by~\cite[Theorem 1.7]{PR:K3}, all such moduli spaces are (locally trivially) deformation-equivalent; we call them of PR$_{m,n}$-{\sf type}.
They have $b_2=23$ and Fujiki constant~1.

\begin{rema}\label{rmk:-20}
The case $\bv_0^2=0$ is slightly different.
Indeed, Theorem~\ref{thm:Yoshioka} still holds: the moduli space $M$ is a K3 surface in this case, but $H^2(M,\Z)$ is now Hodge isometric to $\bv^\perp/\bv$.
If $m\geq2$, then the moduli space $M_{S,\sigma}(\bv)$ is isomorphic to $\Sym^m(M_{S,\sigma}(\bv_0))$ (see, e.g., \cite[Lemma 7.2]{BM:Projectivity}).
If $\bv_0^2=-2$, the moduli space (in both Theorem~\ref{thm:Yoshioka} and Theorem~\ref{thm:PR}) is just a point.
\end{rema}

\begin{rema}\label{rmk:Gieseker}
As observed in~\cite[Proposition 14.1]{Bri:K3} and \cite[Section 6]{Toda:K3}, if we fix an ample divisor $H$ on $S$, for a \emph{positive} vector $\bv$\footnote{A vector $\bv=(r,D,s)\in\widetilde{H}_\mathrm{alg}(S,\Z)$ is positive if, either $r>0$, or $r=0$ and $D$ is an effective divisor, or $r=D=0$ and $s>0$.}, we can choose a Bridgeland stability condition for which semistable objects in $\Db(S)$ with Mukai vector $\bv$ coincide with \emph{$H$-Gieseker semistable} sheaves of Mukai vector $\bv$.
When the ample divisor $H$ is $\bv$-generic, the stability condition $\sigma$ is also $\bv$-generic.
Hence, the Bridgeland stability statements include the case of $H$-Gieseker semistable sheaves as a particular case.
We will denote moduli spaces of $H$-Gieseker semistable sheaves by $M_{S,H}(\bv)$.
\end{rema}

\begin{rema}\label{rmk:twisted}
There are analogous results for the twisted derived category $\Db(S,\alpha)$, where $S$ is a projective K3 surface and $\alpha\in\Br(S)$ is a Brauer class (see~\cite{HS:Twisted,Yos:Twisted}).
We will see a further generalization in Section~\ref{sec:categories} where actual K3 surfaces (or their twisted versions) will be replaced by more abstract triangulated categories with special homological properties making them resemble the derived category of a K3 surface (or of a family of K3 surfaces).
\end{rema}

%%%%%%%%%%%%%%%%%%

\section{Lagrangian fibrations}\label{sec:Lagrangian}

The goal of this section is to review methods for the construction and classification of hyper-K\"ahler varieties, by using their structure of Lagrangian fibration.
More precisely, we will review the fundamental conjecture on the existence of Lagrangian fibrations (in Section~\ref{subsec:SYZ}), see how this would have quite strong consequences on the classification of topological types of hyper-K\"ahler varieties (in Section~\ref{subsec:EFGMS}), and finally state a result which allows us to produce interesting examples of hyper-K\"ahler varieties by compactifying a quasi-projective variety with a Lagrangian fibration (in Section~\ref{subsec:Sacca}).
Examples are in Section~\ref{subsec:ExamplesLagrangian}.

We refer to~\cite{Saw:Survey,HM:Lagrangian,MSY:Abelian} for more exhaustive references on the subject.

\subsection{Definition and the main conjecture}\label{subsec:SYZ}

%In this section we work again with quasi-projective varieties.
We start by reviewing the notion of Lagrangian fibration in the algebraic setting.
Recall that a morphism $f\colon X\to Y$ between integral, normal, quasi-projective varieties is called a \emph{fibration} if it is surjective and with connected fibers.
The general fiber $F$ is then normal and integral, with $F_\mathrm{sing}=F\cap X_\mathrm{sing}$.
Moreover, when $X$ has canonical (resp., terminal) singularities, $F$ has canonical (resp., terminal) singularities as well (see, e.g.,~\cite[Lemma 28]{Sch:Fujiki}).

\begin{defi}\label{def:Lagrangian}
Let $(M,\sigma)$ be a quasi-projective symplectic variety.
\begin{enumerate}[{\rm (i)}]
    \item An integral closed subvariety $Z\subset M$ is called {\sf Lagrangian} if
    \begin{itemize}
        \item $Z\not\subset M_\mathrm{sing}$,
        \item $\dim(Z)=\frac 12 \dim(M)$,
        \item $\sigma|_{Z_\mathrm{reg}\cap M_\mathrm{reg}}\equiv0$.
    \end{itemize} 
    \item A fibration $f\colon M\to B$ is called a {\sf Lagrangian fibration} if it is proper and the general fiber is a Lagrangian subvariety.
\end{enumerate}
\end{defi}

The surprising result is that for primitive symplectic varieties any fibration is a Lagrangian fibration and the general fiber is actually an abelian variety.
This is~\cite{Mat:Base,Mat:Addendum,Mat:Equidimensionality,Hwa:Pn,ShenYin} in the smooth case, extended to the singular case in~\cite{Mat:Base2},~\cite[Theorems 3 \& 4]{Sch:Fujiki}, and~\cite[Theorem 1.1 \& Remark 1.2]{FSY:Intersection}.
The statement in the general analytic setting is~\cite[Theorem 2.10]{KL:NonHyperb}.

\begin{theo}[Matsushita, Hwang, Schwald]\label{thm:Matsushita}
Let $M$ be a projective primitive symplectic variety of dimension $2n$.
Let $f\colon M\to B$ be a fibration onto a normal projective variety $B$ with $0<\dim(B)<2n$.
Then:
\begin{enumerate}[{\rm (a)}]
    \item $B$ is a $\Q$-factorial klt variety of dimension~$n$ with Picard number $\rho(B)=1$.
    \item $f(M_\mathrm{sing})\subset B$ is a proper closed subset; in particular, the general fiber of $f$ is smooth and entirely contained in $M_\mathrm{reg}$.
    \item $f$ is a Lagrangian fibration and the general fiber is an abelian variety of dimension $n$.
    \item Every irreducible component of any fiber of $f$ (with the reduced induced scheme structure) is a Lagrangian subvariety of $M$.
\end{enumerate}
If we further assume that $M$ is irreducible symplectic, then
\begin{enumerate}[{\rm (a)}, resume]
    \item $B$ is Fano (with $H^*(B,\Q)\cong H^*(\P^n,\Q)$ when $b_2(M)\geq5$\footnote{If $M$ is smooth, then the assumption on $b_2(M)$ is not necessary, by~\cite[Theorem 0.4]{ShenYin}.}) and
    \item if $B$ is smooth, then $B\cong\P^n$.
\end{enumerate}
\end{theo}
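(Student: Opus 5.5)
The strategy follows Matsushita's original argument: work with the pull-back $L\coloneqq f^*A$ of an ample class $A$ on $B$, and feed it into the Fujiki relation and the Beauville--Bogomolov--Fujiki form $q_M$.

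\textbf{Dimension of $B$ and the fibres.} Set $k\coloneqq\dim(B)$. Then $L^{k}\neq0$ and $L^{k+1}=0$ in cohomology. Polarizing the Fujiki relation gives
\[
\int_M L^{2n}=(2n-1)!!\,c_M\,q_M(L)^n.
\]
Since $0<k<2n$, the left side vanishes, so $q_M(L)=0$. Expanding $\int_M (L+t\alpha)^{2n}$ for a Kähler class $\alpha$ shows that $L^j\neq0$ exactly for $j\le n$. Hence $k=n$.

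For the Lagrangian property, evaluate the polarized Fujiki relation on $(L^{n}\cdot\sigma\bar\sigma\cdot\alpha^{n-2})$. The only contributing terms involve $q_M(L)$ and $q_M(L,\sigma)$, and both are zero; the second vanishes because $L$ is of type $(1,1)$. Therefore $\int_F\sigma\bar\sigma\,\alpha^{n-2}=0$ on a general fibre $F$, and positivity forces $\sigma|_F=0$. This gives (c) except for the abelian-variety claim.

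For (b), I would argue that a general fibre is disjoint from $M_\mathrm{sing}$, because $\operatorname{codim}M_\mathrm{sing}\ge 4$ for terminal symplectic singularities (Kaledin/Namikawa), so it does not dominate $B$ under the equidimensionality coming from the Lagrangian property.

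Given (b), a general fibre $F$ is a smooth Lagrangian in $M_\mathrm{reg}$, so $N_F\cong\Omega_F$ via $\sigma$. Since $N_F$ is trivial (as a fibre), $T_F$ is trivial, and therefore $F$ is a complex torus. It is projective, hence an abelian variety.

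\textbf{Base properties, (a) and (d).} For (a), $\rho(B)=1$ holds because $f^*\colon N^1(B)\to H^2(M)$ lands in the isotropic subspace of $q_M$ meeting the $(1,1)$-classes, and that subspace has dimension $\le1$ by the signature $(1,\rho-1)$ on $\NS$. For $\Q$-factoriality and klt, use the canonical bundle formula (Ambro/Fujino) for $K_M\sim0$. For (d), apply the same Fujiki computation to a component of a special fibre, using local Lagrangian-ness via Matsushita's equidimensionality.

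\textbf{Irreducible symplectic case, (e) and (f).} Under irreducibility, $B$ is Fano: $-K_B$ equals the discriminant plus moduli parts, and $B$ is rationally connected with $H^{p,0}(B)=0$ since it injects into $H^{p,0}(M)$. The cohomology of $\P^n$ comes from Shen--Yin's perverse/Lefschetz argument, which uses $b_2\ge5$ via Looijenga--Lunts--Verbitsky. Part (f) is Hwang's theorem via varieties of minimal rational tangents.

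\textbf{Main obstacle.} I expect the main difficulty to be (e) and (f), together with extending (b) and (d) to singular $M$. I would simply invoke the cited works for these rather than reprove them.
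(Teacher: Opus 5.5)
\textbf{Verdict: genuine gap in (b).} Most of your sketch follows Matsushita's standard route. The paper itself gives no proof here and only cites that literature. The following parts are fine:
\begin{itemize}
\item the Fujiki relation forcing $q_M(f^*A)=0$ and $\dim B=n$;
\item $\rho(B)=1$ from the total isotropy of $f^*N^1(B)$ in the hyperbolic space $\NS(M)_\R$;
\item $\sigma|_F=0$ from $\int_M L^n\sigma\bar\sigma\alpha^{n-2}=0$ (for singular $M$, run the positivity on a resolution, where $\sigma$ extends);
\item the torus argument for a smooth fibre inside $M_\mathrm{reg}$.
\end{itemize}
The gap is your argument for (b), and the abelian-variety part of (c) depends on it.

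\textbf{Why your argument for (b) fails.} First, $M$ is only assumed primitive symplectic, not terminal, so $M_\mathrm{sing}$ can have codimension~$2$. For example, for an elliptic K3 surface $S\to\P^1$, the fibration $\Sym^2S\to\Sym^2\P^1\cong\P^2$ has singular locus the diagonal $\cong S$. Second, even granting $\codim M_\mathrm{sing}\geq 4$, a dimension count only rules out a dominant $f|_{M_\mathrm{sing}}$ when $2n-4<n$. For $n\geq 4$, $f|_{M_\mathrm{sing}}$ could be dominant with fibres of dimension $\leq n-4$. Equidimensionality of $f$ says nothing about how the fibres meet $M_\mathrm{sing}$.

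\textbf{The missing idea is Poisson-theoretic.}
\begin{itemize}
\item The bracket defined by $\sigma$ on $\cO_{M_\mathrm{reg}}$ extends to $\cO_M$ by normality.
\item For local functions $a,b$ on $B$, the regular function $\{f^*a,f^*b\}$ vanishes wherever $f$ is smooth with Lagrangian fibre. There the Hamiltonian vectors of pulled-back functions lie in $(T_xF)^{\perp_\sigma}=T_xF$. Hence the bracket vanishes identically.
\item By Kaledin, $M_\mathrm{sing}$ is a finite union of smooth locally closed Poisson subvarieties (symplectic leaves), each carrying a symplectic induced structure.
\item Suppose a leaf $S$ dominated $B$. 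Take a general point $s\in S$ and local coordinates $b_1,\dots,b_n$ at a smooth point of $B$. The differentials of $f^*b_1|_S,\dots,f^*b_n|_S$ would span an $n$-dimensional isotropic subspace of the symplectic space $T^*_sS$. This forces $\dim S\geq 2n$, whereas $\dim S<2n$.
\end{itemize}
Hence $f(M_\mathrm{sing})\neq B$.

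\textbf{Two smaller inaccuracies.}
\begin{itemize}
\item The canonical bundle formula gives a klt pair $(B,\Delta_B)$ and pseudo-effectivity of $-K_B$, but not $\Q$-factoriality of $B$. That needs a separate descent argument, using equidimensionality of $f$ and $\Q$-factoriality upstairs, e.g.\ on a $\Q$-factorial terminalization of $M$.
\item In (e), injectivity of pull-back of forms from $B$ gives no vanishing in even degrees, since $H^0(M,\Omega_M^{[2k]})\neq 0$. What is needed is to exclude $K_B\equiv0$, the only alternative to ampleness of $-K_B$ once $\rho(B)=1$. Do this by pulling a reflexive $n$-form back from the index-one cover of $B$ to the induced quasi-\'etale cover of $M$. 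There it is killed by vertical vectors, whereas the only reflexive forms on an irreducible symplectic variety are powers of the symplectic form, which have no kernel. This is exactly where irreducibility enters.
\end{itemize}
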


The main open problem in the theory of symplectic varieties is the so called \emph{SYZ Conjecture}: Every primitive symplectic variety with $b_2\geq5$ is deformation-equivalent to a primitive symplectic variety with a Lagrangian fibration.
The precise version of this conjecture is the following, which is also a special case of the \emph{Generalized Abundance Conjecture} (see~\cite{Kaw:Abundance}).

\begin{conj}[SYZ Conjecture]\label{conj:SYZ}
Let $M$ be a projective primitive symplectic variety and let $L$ be a nef line bundle on $M$. Then $L$ is semiample.    
\end{conj}

If $q_M([L])>0$, then Conjecture~\ref{conj:SYZ} holds, by the Kawamata--Viehweg Vanishing Theorem.
By Theorem~\ref{thm:Matsushita}, if we take a nonzero nef line bundle with $q_M([L])=0$, Conjecture~\ref{conj:SYZ} gives a Lagrangian fibration $f\colon M\to B$ associated to (multiples of) $L$.
There are more precise versions of the above conjecture in the hyper-K\"ahler case (for instance, in the smooth case, there is the expectation that $B$ is always smooth as well, there are no multiple fibers in codimension~1, etc.).
We simply remark here that Conjecture~\ref{conj:SYZ} is true (in the strongest possible form) for all known examples of deformation classes of smooth projective hyper-K\"ahler manifolds (see~\cite{BM:MMP,Markman:Lagrangians,Yos:MMP,MO:OG10,MR:OG6}).

Conjecture~\ref{conj:SYZ} has strong implications.
We will discuss briefly some of these in Section~\ref{subsec:EFGMS}.
An idea of Markman is also that we can conversely construct examples of symplectic varieties by compactifying special Lagrangian fibrations.
This is the content of the next section.

\subsection{The Sacc\`a Theorem}\label{subsec:Sacca}

It was first observed in~\cite{LSV:OG10} that once we have a symplectic variety with a morphism onto a `large' open subset of a projective base, then we can compactify this to another symplectic variety.
We will see this example, the Laza--Sacc\`a--Voisin system, in Section~\ref{subsec:ExamplesLagrangian}.
The general result is quite strong in fact and it is proved in~\cite{Sac:Lagrangian}.

\begin{theo}[Sacc\`a]\label{thm:Sacca}
Let $B$ be an integral normal projective variety, and let $\emptyset\neq U\subset V\subset B$ be nonempty open subsets.    
Let $(M_V,\sigma_V)$ be an integral quasi-projective symplectic variety with $\Q$-factorial, terminal singularities, and let $f_V\colon M_V\to V$ be a fibration.
Let us assume:
\begin{enumerate}[{\rm (i)}]
    \item\label{enum:Sacca1} $\codim_B(V)\geq2$;
    \item\label{enum:Sacca2} $\sigma_V$ extends to a regular 2-form on a regular compactification of $M_{V,\mathrm{reg}}$;
    \item\label{enum:Sacca3} if $M_U\coloneqq f_V^{-1}(U)$, then the restriction $f_U\colon M_U\to U$ is a Lagrangian fibration.
\end{enumerate}
Then there exists an integral projective symplectic variety $(M,\sigma)$ with $\Q$-factorial, terminal singularities, and a Lagrangian fibration $f\colon M\to B$ such that:
\begin{itemize}
    \item $M_V\subset M$ is an open subset and $\sigma$ is an extension of $\sigma_V$;
    \item $f$ is an extension of $f_V$.
\end{itemize}
\end{theo}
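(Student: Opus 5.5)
The plan is to first compactify arbitrarily and then run the Minimal Model Program relative to $B$, using the symplectic form to control the process.

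Concretely, since $f_V\colon M_V\to V$ is a projective-type morphism of quasi-projective varieties, we first choose some projective compactification $\overline{M}$ of $M_V$ together with a morphism $\bar f\colon\overline{M}\to B$ extending $f_V$. One way is to take the closure of the graph of $M_V\to V\subset B$ inside $\P^N\times B$. We then pass to a log resolution $Y\to\overline{M}$ which is an isomorphism over $M_{V,\mathrm{reg}}$. By hypothesis~\ref{enum:Sacca2}, $\sigma_V$ extends to a regular $2$-form $\sigma_Y$ on $Y$; here we use that the extension property is independent of the chosen regular compactification, since any two are dominated by a common one. Since $\sigma_V$ is nondegenerate on $M_{V,\mathrm{reg}}$, the form $\sigma_Y^{\wedge n}$ is a section of $\omega_Y$ which does not vanish over $f^{-1}(V)\cap M_{V,\mathrm{reg}}$. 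Hence $K_Y\sim E$ with $E\geq0$ effective, supported on the boundary $Y\setminus M_{V,\mathrm{reg}}$. This is the first key step.

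Next we run a relative MMP for $Y$ over $B$, in the form of \cite{BCHM}. Because $K_Y$ is effective, it is pseudo-effective. Over $U$ the general fibre is Lagrangian, hence an abelian variety by the argument of Theorem~\ref{thm:Matsushita}, so $K_Y$ is trivial on the general fibre. Consequently the components of $E$ dominating $B$ should be exceptional over $M_V$, while the remaining components map into $B\setminus U$. We aim to show that an MMP over $B$, possibly with scaling, contracts all components of $E$ and terminates with a relatively minimal model $M\to B$ on which $K_M\equiv_B 0$.

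The main obstacle is this step: a general relative MMP need not terminate, and one must show that every component of $E$ is eventually contracted. For components lying over the codimension-$\geq2$ locus $B\setminus V$, hypothesis~\ref{enum:Sacca1} lets us treat them as \emph{very exceptional} divisors in the sense of Lai / Birkar, for which the MMP contracts them. For vertical components over $V\setminus U$ and for the exceptional divisors over $M_V$, we use that $M_V$ itself is already $\Q$-factorial terminal with $K_{M_V}\sim0$. Negativity and the equality of discrepancies then show that the MMP does not modify the part over $V$, so $M_V\subset M$ survives as an open subset. One expects to need that fibres over $V$ carry no divisor degenerating multiply. I would first try to prove the very-exceptionality directly from $\codim_B(B\setminus V)\geq2$, arguing that any divisor of $Y$ mapping into $B\setminus V$ is very exceptional over $B$.

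Finally, once $K_M\sim0$ on the output $M$, which is $\Q$-factorial and terminal, the $2$-form extends from $M_V$ to $M_\mathrm{reg}$. Since it is nondegenerate away from a divisor on which $\sigma^{\wedge n}$ would vanish, and $K_M\sim0$ forces that divisor to be empty, $\sigma$ is symplectic. The extension to resolutions comes from the rational singularities of $M$, via~\cite{KS:Extension}. Because $f$ restricts to the Lagrangian fibration $f_U$ over $U$, the morphism $f$ is a Lagrangian fibration.
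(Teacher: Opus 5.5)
The paper gives no proof of this statement; it quotes it from \cite{Sac:Lagrangian}. Your outline is the right one: compactify over $B$, resolve, use $\sigma_Y^{\wedge n}$ to get $K_Y\sim E\geq 0$ with $E$ supported off $M_{V,\mathrm{reg}}$, then run a $K_Y$-MMP over $B$ with scaling that contracts $E$. Your first two steps and your last paragraph are fine. However, the two statements that actually need proof are left unproved, and the mechanisms you suggest for them are not the right ones.

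\emph{Contracting $E$.} The Lai/Birkar result contracts $E$ only if \emph{all} of $E$ is very exceptional over $B$, and you check this only for components lying over $B\setminus V$. The missing observation is the following. By hypothesis~\ref{enum:Sacca1}, every prime divisor $P\subset B$ meets $V$. Since $f_V$ is surjective, $f_V^{-1}(P\cap V)$ contains a prime divisor $Q$ of $M_V$ dominating $P$. As $Q\not\subset M_{V,\mathrm{sing}}$, the form $\sigma_V$ is nondegenerate at the generic point of $Q$, so the strict transform of $Q$ is not in $\mathrm{Supp}(E)$. Hence over every divisor of $B$ the support of $E$ misses a component of the fibre, which is exactly very exceptionality for vertical divisors.

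This is what handles the components of $E$ over divisors in $V\setminus U$. These have no centre on $M_V$, so ``negativity and equality of discrepancies'' with $M_V$ says nothing about them. No condition on multiple fibres is needed either, since very exceptionality only involves supports.

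You also cannot leave components of $E$ dominating $B$ as ``exceptional over $M_V$'': horizontal divisors are never very exceptional, so you must show there are none. Such components have centre in $M_{V,\mathrm{sing}}$, so what you need is $f_V(M_{V,\mathrm{sing}})\neq V$. This is true, e.g.\ via Kaledin's stratification by symplectic leaves, but it must be argued.

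\emph{$M_V\subset M$.} The MMP does modify the part over $V$: it contracts the components of $E$ over $V\setminus U$ and the exceptional divisors over $M_{V,\mathrm{sing}}$.

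What you get for free is the following:
\begin{itemize}
\item Each step is an isomorphism off the support of the transform of $E$, because the $K$-negative loci lie there. So $M_{V,\mathrm{reg}}$ embeds openly in $M$.
\item Discrepancies over $M_V$ and over $M$ are both $\mathrm{ord}(\sigma^{n})$. So $M_V\dashrightarrow M$ contracts no divisor of $M_V$, and no divisor of $M$ with centre on $M_V$ is exceptional over $M_V$.
\end{itemize}

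Concluding that this map is a morphism at points of $M_{V,\mathrm{sing}}$ (hence an open immersion by Zariski's Main Theorem) needs a further argument using the $\Q$-factoriality of $M_V$. The usual negativity-lemma argument is not directly available, because $M_V\to V$ is not proper.

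One way around this is to start the MMP from a projective $\Q$-factorial klt model over $B$ that already contains $M_V$ as an open subset. Then $\mathrm{Supp}(E)$ is disjoint from $M_V$, and the MMP is an isomorphism on $M_V$. At the end, take a $\Q$-factorial terminalization, which is an isomorphism over the $\Q$-factorial terminal locus.
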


Some remarks are in order about Theorem~\ref{thm:Sacca}.
First of all, we stress the fact that $f_V$ is a fibration (hence surjective, with connected fibers), but it is not assumed to be proper.
The assumptions only ask for a symplectic form on $M_V$ and a Lagrangian fibration structure (in particular, proper) over a possibly smaller open subset $U\subset V$.
The important assumption is that $V$ is `large' in $B$, namely it has complement of codimension at least~2.
The fact that $\sigma_V$ extends to a regular 2-form on a (hence all) regular compactification seems difficult to check, but in the examples is often automatic: the 2-forms are generally constructed by using cycles, hence they extend to any compactification.

Theorem~\ref{thm:Sacca} can be improved, to obtain irreducible symplectic varieties, or even hyper-K\"ahler varieties as compactifications.
Several extensions were proved in~\cite{LLX:Lagrangian}.
In particular, we mention~\cite[Theorem~3.6]{LLX:Lagrangian}: by using the Beauville--Bogomolov Decomposition Theorem, they show that if we further assume, in Theorem~\ref{thm:Sacca}, that the very general fiber of $f_U$ is a simple abelian variety and that $f_U$ is not isotrivial, then the compactification $M$ is an irreducible symplectic variety.

\subsection{Examples}\label{subsec:ExamplesLagrangian}

There are two key examples of Lagrangian fibrations on projective hyper-K\"ahler manifolds: the Beauville--Mukai system and the Laza--Sacc\`a--Voisin system.
We discuss them briefly in this section, and then present as well new singular examples based directly on Sacc\`a's Theorem.

\subsubsection*{The Laza--Sacc\`a--Voisin 2-form}
In all examples, the 2-form is constructed by a general procedure, explained in~\cite[Section 1.1]{LSV:OG10}.
These forms have the property that they always extend and they are closed (see~\cite[Theorem 1.2(iii),(iv)]{LSV:OG10}), namely assumption~\ref{enum:Sacca2} in Theorem~\ref{thm:Sacca} is always satisfied.
We just give an idea of their construction, since it is quite intuitive.
The general setting is the following.
We fix an integer $k\geq1$ and we let $(X,H)$ be a smooth projective variety of dimension~$2k$, with a very ample divisor $H$.
We consider the open subset $U\subset |H|$ parameterizing smooth hyperplane sections and $g\colon\cY_U\to U$ the associated smooth projective fibration.
We assume the following: for all $[Y]\in U$, $h^{p,q}(Y)=0$, when $p+q=2k-1$ and $(p,q)\neq(k,k-1),(k-1,k)$.
We can then consider the smooth projective fibration $f_U\colon \cJ_U\to U$ whose fibers are the intermediate Jacobians $J^{2k-1}(Y)\coloneqq H^{k-1}(Y,\Omega_Y^k)^\vee/H_{2k-1}(Y,\Z)$.
Then, for any $\eta\in H^{k+1,k-1}(X)$ with $\eta |_Y\equiv 0$, we get a 2-form $\sigma_\eta$ on $\cJ_U$ whose restriction on the fibers of $f_U$ is trivial.

The idea is the following.
For simplicity we assume that there exists a codimension-$k$ cycle $\cZ\in \mathrm{CH}^k(\cJ_U\times_U\cY_U)_\Q$ such that the projection $\alpha_0$ of its class $[\cZ]\in H^{2k}(\cJ_U\times_U\cY_U)$ to $H^0(U,R^{2k}(f,g)_*\Q)$ has the property that
\[
\alpha_0^*\colon R^{2k-1}g_*\Q \xlongrightarrow{\sim}R^1f_*\Q
\]
is an isomorphism.
Then, since the morphism $q\colon \cJ_U\times_U\cY_U \to\cJ_U\times X$ is proper, we can define $\cZ_q\coloneqq q_*\cZ\in\mathrm{CH}^{k+1}(\cJ_U\times X)_\Q$, take its Dolbeault cohomology class $[\cZ_q]\in H^{k+1}(\cJ_U\times X, \Omega^{k+1}_{\cJ_U\times X})$, and finally define
\[
\sigma_\eta\coloneqq [\cZ_q]^*\eta\in H^0(\cJ_U,\Omega^2_{\cJ_U})
\]
as our 2-form.

The first issue is to show that this 2-form is in fact symplectic.
This becomes a cohomology computation (see~\cite[Theorem 1.2(ii)]{LSV:OG10}).
More precisely, for all $[Y]\in U$, by using the assumptions, the class $\eta$ induces a class $\widetilde{\eta}\in H^{k-1}(Y,\Omega_Y^k(-H_Y))$, where $H_Y$ denotes the restriction of $H$ to $Y$.
Then the 2-form $\sigma_\eta$ is nondegenerate if the morphism given by multiplication by $\widetilde{\eta}$
\begin{equation}\label{eq:SymplecticCheck}
H^0(Y,\cO_Y(H_Y)) \longrightarrow H^{k-1}(Y,\Omega_Y^k)    
\end{equation}
is an isomorphism.
The second issue, which is more important, is that there is a larger open subset $V\supset U$ where the extension of $\sigma_\eta$ is also symplectic (in general, we must allow some singularities of $Y$; for instance, nodal singularities).

\subsubsection*{The Beauville--Mukai system}
Let $(S,H)$ be a polarized K3 surface of genus $g=1+H^2/2\geq2$.
Let us assume, for simplicity, that the Picard group of $S$ is of rank~1, $\Pic(S)=\Z\cdot H$.
The Beauville--Mukai system can be constructed in two ways, either by compactification or by moduli spaces of torsion sheaves on $S$.

The easiest way is to apply the previous construction: here $X=S$ satisfies all the assumptions (with $k=1$). Hence, we get a 2-form $\sigma_U$ on the relative Jacobian fibration $f_U\colon\cJ_U\to U$, which in this case it is not too difficult to check it is actually symplectic.
To apply Theorem~\ref{thm:Sacca}, the idea is to consider the Jacobian fibration over the locus $V$ of curves $C\in |H|$ with nodal singularities.
The open subset $V$ has indeed codimension~2 in $|H|$, and the fact that the extension $\sigma_V$ is symplectic comes as in~\cite[Section 1.4]{LSV:OG10}, by a similar isomorphism as in~\eqref{eq:SymplecticCheck} with logarithmic forms on the resolution of $Y=C$.
In the end, the symplectic compactification $\cJ_S\to|H|\cong\P^g$ which comes out of Theorem~\ref{thm:Sacca} is actually a smooth projective hyper-K\"ahler manifold of K3$^{[g]}$-type.

Another way to see this is to use moduli spaces of sheaves on the K3 surface $S$, as in Section~\ref{subsec:Examples}.
In fact, we can consider the Mukai vector $\bv=(0,H,1-g)$.
Then the moduli space of torsion sheaves of rank~1 supported on the curves $C\in |H|$ is a projective hyper-K\"ahler manifold of K3$^{[n]}$-type, by Theorem~\ref{thm:Yoshioka} (by taking a stability condition which gives Gieseker stability on sheaves).
It is not difficult to see that this also gives a compactification of $\cJ_U$, and they are actually isomorphic (given our assumption that the K3 surface has Picard rank~1).

\subsubsection*{The Laza--Sacc\`a--Voisin system}
Let $W\subset\P^5$ be a smooth projective cubic fourfold\footnote{In~\cite{LSV:OG10} the cubic is assumed to be general; see~\cite{Sac:IntermediateJac} for the extension to all cubics.}, and let us choose $H\coloneqq\cO_W(1)$ the hyperplane section.
We apply the Laza--Sacc\`a--Voisin construction to $(W,H)$ (here $k=2$), and obtain a $2$-form $\sigma_U$ on the intermediate Jacobian fibration $f_U\coloneqq\cJ_U\to U$. 
Again, it is not so difficult to prove that $\sigma_U$ is symplectic.
Similarly to the Beauville--Mukai system, we can extend this fibration to a larger $V$ by including nodal cubic hypersurfaces in $W$ and check that the extended 2-form $\sigma_V$ is again symplectic.
The symplectic compactification $\cJ_W$ which arises from Theorem~\ref{thm:Sacca} is a smooth projective hyper-K\"ahler manifold of OG10-type.
This proves a conjecture by Donagi--Markman and Markushevich.

It is more difficult in this situation to reinterpret this construction in terms of moduli spaces, but it is true. 
This is Theorem~\ref{thm:Bottini} below, and it will be the subject of Section~\ref{sec:atomic}.
A posteriori, this is similar to the Beauville--Mukai system: it is isomorphic to a moduli space of atomic sheaves on the variety of lines on the cubic fourfold (which is a projective hyper-K\"ahler manifold of K3$^{[2]}$-type, by~\cite{BD:Cubic}), when $W$ is general.

\subsubsection*{Further examples}
A slightly different series of examples still comes from geometry.
In \cite{LLX:Lagrangian}, the authors still fix a cubic fourfold $W$ as in the previous section.
The system of intermediate Jacobians is now obtained by looking at cubic fivefolds $Y$ containing $W$, but the construction is more complicated (based on~\cite{IM:Cubic,Marku:Integrable}).
They finally obtain an example of \emph{singular} projective irreducible symplectic variety $M_{\mathrm{LLX}}$, with $\Q$-factorial terminal singularities, $b_2(M_{\mathrm{LLX}})\geq24$, dimension~42, and with a Lagrangian fibration
\[
f\colon M_{\mathrm{LLX}}\longrightarrow \P(1^{15},2^6,3)
\]
over a weighted projective space.
We do expect that $b_2(M_{\mathrm{LLX}})=24$ and that it is a projective hyper-K\"ahler variety.

Similarly, in~\cite{MOS:GM} (based on~\cite{IM:GM}), we consider a Gushel--Mukai sixfold $W$ and the very ample divisor $H=\cO_W(1)$.
The construction of Laza--Sacc\`a--Voisin applies again, but the 2-form is now degenerate. This can be fixed by quotienting by the natural $\mu_2$-action coming from $X$. Finally, we obtain another example of \emph{singular} projective irreducible symplectic variety $M_{\mathrm{MOS}}$, with $\Q$-factorial terminal singularities, $b_2(M_{\mathrm{MOS}})\geq24$, dimension~20, and with a Lagrangian fibration
\[
f\colon M_{\mathrm{MOS}}\longrightarrow \P(1^{10},2)
\]
over a weighted projective space.
Again, we expect that $b_2(M_{\mathrm{MOS}})=24$ and that it is a projective hyper-K\"ahler variety.

These examples should be thought as singular analogues of hyper-K\"ahler manifolds of OG10-type.
There is yet no interpretation of these as moduli spaces, although work on this direction is in~\cite{GL:Immersed}, again by using atomic sheaves.
We will come back to this in Sections~\ref{subsec:ExamplesAtomic} and~\ref{subsec:ModuliSpaces}.

\subsection{Recent major advances}\label{subsec:EFGMS}

We finish this section on Lagrangian fibrations, by reviewing two very recent results from~\cite{EFGMS:Boundedness}.
The main result is that, if we assume that Conjecture~\ref{conj:SYZ} holds, namely that Lagrangian fibrations exist, then many other conjectures in the theory of hyper-K\"ahler varieties hold as well.

More precisely, let us assume that the SYZ Conjecture holds for all primitive symplectic varieties. Then, as proved in~\cite[Corollary 1.3]{EFGMS:Boundedness}, the number of (locally trivial) deformation classes of primitive symplectic varieties, of fixed dimension and $b_2\geq5$, is finite.
This is the first general result towards a possible classification of projective hyper-K\"ahler varieties.
Another striking consequence of the SYZ Conjecture is the proof of Conjecture~\ref{conj:Rapagnetta}~\cite[Theorem F]{EFGMS:Boundedness}.
More precisely, if $M$ is a projective irreducible symplectic variety admitting a Lagrangian fibration, then the regular locus $M_\mathrm{reg}$ has finite fundamental group.

We conclude by mentioning that, when the Lagrangian fibration is isotrivial, stronger classification results are also available (see~\cite{KLM:Isotrivial}).

%%%%%%%%%%%%%%%%%%

\section{Sheaves on hyper-K\"ahler manifolds}\label{sec:atomic}

On higher dimensional hyper-K\"ahler manifolds, the theory of moduli spaces of sheaves is significantly more complicated than what we saw in Section~\ref{subsec:Examples} in the K3 surface case. 
Here are some bad phenomena one might encounter:
\begin{enumerate}[{\rm (a)}]
    \item For a stable sheaf $F$, the group $\Ext^2(F,F)$ might be large. 
     Consequently, the deformation theory of $F$ requires a much more careful analysis than in the K3 surface setting.
\item A moduli space of stable sheaves might be reducible, non-reduced, and possibly disconnected, even when fixing the topological invariants.
    \item Even if a connected component of a moduli space is smooth, it might not admit a symplectic form.
\end{enumerate}
We will describe explicit examples of these pathologies in Section \ref{subsec:ModuliSpaces}.
However, there are important, distinguished classes of sheaves for which we at least know that a symplectic form exists on the smooth locus of their moduli spaces.
Namely, line bundles on Lagrangian subvarieties, and projectively hyperholomorphic bundles. These two cases were studied independently in the 1990s by Donagi--Markman \cite{DM} and Verbitsky \cite{Verbitsky:Hyperholomorphic}. 

In this section, we review these foundational examples and their associated symplectic forms. 
We then explain how these two seemingly distinct classes of sheaves can be elegantly unified via the recent notion of an atomic (or cohomologically 1-obstructed) object. 
We provide several examples of atomic objects and discuss their moduli spaces, culminating in the modular construction of the OG10 geometry on hyper-Kähler fourfolds.

A highly relevant class of sheaves in this context is that of modular sheaves, which we will encounter but not discuss in full detail. 
They can be roughly thought of as projectively hyperholomorphic (see Definition \ref{defi:Hyperholomorphic}) with respect to every K\"ahler class.
For a comprehensive overview on this topic, we refer to \cite{OG:Survey}.

We work with hyper-K\"ahler manifolds.
We will mostly consider the projective setting, but briefly discuss some analytic concepts as well (e.g., in Section~\ref{subsec:Hyperholomorphic}); there is no known extension yet of the theory to the singular case.

\subsection{Lagrangian subvarieties}\label{subsec:AtomicLagrangians}

The first case we will address is that of the \emph{higher dimensional Beauville--Mukai systems} parameterizing line bundles on deformations of a Lagrangian submanifold. 
Let $X$ be a projective hyper-K\"ahler manifold of dimension $2n$.
Let $i\colon Z\hookrightarrow X$ be a smooth Lagrangian subvariety, and let $L\in\Pic(Z)$ be a line bundle. 
The sheaf $i_*L$ has been extensively studied in~\cite{DM}, where it is shown that $\Def(i_*L)$ is smooth and symplectic.

In fact, Voisin~\cite{Voisin:Lagrangians} showed, via the $T^1$-lifting property, that the Hilbert scheme $\mathrm{Hilb}_X^P$ parametrizing deformations of $Z$ in $X$ is smooth at the point $[Z]$.
Therefore, the (connected) open locus $U \subset \mathrm{Hilb}_X^P$ parameterizing \emph{smooth} deformations of $Z$ is smooth itself. 
Let
\[
M_U\coloneqq\Pic^{0}(\cZ/U)\longrightarrow U
\]
be the identity component of the relative Picard scheme of the universal family $\cZ \subset X \times U$. 
The following was proved in~\cite[Theorem 8.1]{DM}:

\begin{theo}[Donagi--Markman]\label{thm:DM}
There is a canonical symplectic form on $M_U$, which depends only on a polarization on $X$, such that the support morphism $f_U\colon M\to U$ is a Lagrangian fibration. 
\end{theo}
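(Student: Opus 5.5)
Let $g\colon\cZ\to U$ be the universal smooth Lagrangian and $f_U\colon M_U=\Pic^0(\cZ/U)\to U$ the relative Picard. The symplectic form will come from a cycle-theoretic correspondence, in the style of the Laza--Sacc\`a--Voisin 2-form recalled in Section~\ref{subsec:ExamplesLagrangian}. On $\cZ\times_U M_U$ take the Poincar\'e line bundle $\cP$. Push $c_1(\cP)$ forward along the proper map $\cZ\times_U M_U\to X\times M_U$ to get a class $[\cZ_q]\in H^{n+1}(X\times M_U,\Omega^{n+1})$. Fix an ample class $h$ on $X$ with Kähler class $\omega$, and define
\[
\sigma_{M}\coloneqq [\cZ_q]^*\big(\sigma_X\wedge\omega^{n-1}\big)\in H^0(M_U,\Omega^2_{M_U}),
\]
suitably normalized. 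This uses $\sigma_X\wedge h^{n-1}\in H^{n+1,n-1}(X)$. The form is closed because it is induced by a correspondence from a closed class, and it visibly depends only on the polarization.

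The main work is the tangent-space computation at a point $(Z,L)$. The Lagrangian condition gives the isomorphism $N_{Z/X}\cong\Omega^1_Z$ via $\sigma_X$. Hence the tangent space to $U$ at $[Z]$ is $H^0(N_{Z/X})\cong H^0(\Omega^1_Z)$, and the tangent space to the fibre is $H^1(\cO_Z)$. This gives the exact sequence
\[
0\to H^1(Z,\cO_Z)\to T_{(Z,L)}M_U\to H^0(Z,\Omega^1_Z)\to0 .
\]
By Voisin's $T^1$-lifting and smoothness of $U$, every first-order deformation of $Z$ is unobstructed, and $L$ deforms along it. Indeed, the obstruction to deforming $L$ lies in $H^2(\cO_Z)$ and is given by pairing $c_1(L)$ with the Kodaira--Spencer class; for $L$ in the identity component $c_1(L)$ is torsion, so this obstruction vanishes. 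This justifies the sequence.

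Unwinding the correspondence, I expect $\sigma_M$ to be the following pairing on these two pieces. It vanishes on $H^1(\cO_Z)\times H^1(\cO_Z)$, since $\cP$ restricted to a fibre is a family of line bundles of degree zero and the form pulls back to zero there. On $H^1(\cO_Z)\times H^0(\Omega^1_Z)$ it is
\[
(a,b)\longmapsto \int_Z a\wedge b\wedge \omega|_Z^{\,n-1}.
\]
By Hard Lefschetz on $Z$ for $\omega|_Z$, together with Hodge symmetry $H^1(\cO_Z)\cong\overline{H^0(\Omega^1_Z)}$, this pairing is perfect. Therefore $\sigma_M$ is nondegenerate, and the fibres $\Pic^0(Z)$ are $n$-dimensional isotropic, hence Lagrangian. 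Properness of $f_U$ is clear, since the fibres are abelian varieties, so $f_U$ is a Lagrangian fibration.

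The hardest step is identifying the correspondence-induced form with this explicit pairing. In particular, one must check that the term pairing two base directions does not spoil nondegeneracy. It does not have to vanish: a block-triangular form with a perfect off-diagonal block is still nondegenerate. So I would compute only the fibre–fibre and fibre–base blocks, doing the computation at the Dolbeault level via Kodaira--Spencer and the isomorphism $N\cong\Omega^1_Z$, following Donagi--Markman's Atiyah-class argument. An alternative, which avoids this identification, is to obtain the form intrinsically from Serre duality on $X$ via Mukai's construction on $\Ext^1(i_*L,i_*L)$, twisted by $\omega^{n-1}$, and then compare.
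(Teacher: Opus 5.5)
Your Laza--Sacc\`a--Voisin-type construction is a viable route, and it differs from the paper's. As written, though, it has a gap at the decisive step. Nondegeneracy of $\sigma_M$, which is the real content of the statement, rests entirely on the fibre--base formula that you only \emph{expect}. In your setup this is a short computation, and you need to carry it out.

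Put $\theta=\sigma_X\wedge\omega^{n-1}$, let $i\colon\cZ\times_U M_U\to X$ be the natural map, and let $\mathrm{pr}\colon\cZ\times_U M_U\to M_U$ be the proper submersion with fibres $Z$. By the projection formula, $\sigma_M=\mathrm{pr}_*\bigl(i^*\theta\wedge c_1(\cP,h)\bigr)$. Hence $\sigma_M(\xi_1,\xi_2)=\pm\int_Z\iota_{\tilde\xi_2}\iota_{\tilde\xi_1}\bigl(i^*\theta\wedge c_1(\cP,h)\bigr)$ for arbitrary lifts $\tilde\xi_j$. This description also gives two things your proposal leaves implicit:
\begin{itemize}
\item closedness on the noncompact $M_U$, since fibre integration of closed forms yields a closed $(2,0)$-form;
\item independence of $h$ and of the normalization of $\cP$, because the changes produce fibre integrals of type $(2,-1)$ or $(1,-1)$. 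This matters because $\cP$ exists only \'etale-locally over $U$.
\end{itemize}

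Now evaluate the three kinds of terms.
\begin{itemize}
\item A vertical vector $a\in H^1(Z,\cO_Z)$ lifts with zero component along $\cZ$. So $\iota_{\tilde a}i^*\theta=0$. For the standard metric, $(\iota_{\tilde a}c_1(\cP,h))|_Z$ is the harmonic $(0,1)$-form representing $a$, up to a universal constant.
\item Let $\xi\in T_mM_U$ have image $v\in T_{[Z]}U=H^0(Z,N_{Z/X})$. The vector field $\nu=di(\tilde\xi)$ along $Z$ has normal component $v$. Since $\sigma_X|_Z=0$, and $(\iota_\nu\sigma_X)|_Z$ depends only on $\nu$ modulo $T_Z$, you get $(\iota_\nu\theta)|_Z=(\iota_v\sigma_X)|_Z\wedge\omega|_Z^{n-1}$.
\item Finally, $\theta|_Z=0$, being an $(n+1,n-1)$-form on the $n$-fold $Z$.
\end{itemize}
Only one term survives: $\sigma_M(\xi,a)=\pm\int_Z b\wedge\omega|_Z^{n-1}\wedge a$ with $b=(\iota_v\sigma_X)|_Z\in H^0(Z,\Omega^1_Z)$. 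This is exactly your predicted block. It is perfect by hard Lefschetz, $\omega|_Z^{n-1}\colon H^{1,0}(Z)\xrightarrow{\sim}H^{n,n-1}(Z)$, together with Serre duality. The same expansion kills the vertical--vertical block, and the reason is $\theta|_Z=0$; the degree-zero property you invoke plays no role.

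Also, the fibres of $f_U$ have dimension $h^{0,1}(Z)=h^{1,0}(Z)=\dim U$, not $n$. For $F(W_H)\subset F(W)$ this is $5$ versus $2$. So $\dim M_U=2h^{0,1}(Z)$, and the fibres are Lagrangian because they are isotropic of half that dimension.

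On the comparison of routes: the paper does not reprove the theorem; it cites Donagi--Markman. It only describes the form pointwise on $\Ext^1(i_*L,i_*L)$, as $\Tr(\mathrm{obs}_E(\eta)^{n-1}\circ\alpha\circ\beta)$, or to first approximation as $\int_Z\omega|_Z^{n-1}\cup\alpha\cup\beta$ on $H^1(Z,\C)$. This is essentially the Mukai-type alternative you mention at the end. In that picture nondegeneracy is immediate from hard Lefschetz, while closedness and the comparison of the two descriptions are delicate (Remark~\ref{rem:Multiplicativity}). Your cycle-theoretic definition reverses the trade-off. Closedness and canonicity are automatic, nondegeneracy costs the first-order computation above, and the result agrees with the $H^{1,0}\times H^{0,1}$ part of the paper's naive pairing.
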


The following question, raised in \cite{DM}, is still open in general. 

\begin{ques}
Does $f_U\colon M_U \to U$ admit a (possibly singular) hyper-K\"ahler compactification $M$ with a Lagrangian fibration $f\colon M\to B$? 
Can we give a modular description of $M$?  
\end{ques}

The first question is exactly the context where Theorem~\ref{thm:Sacca} should apply, once we can extend Theorem~\ref{thm:DM} to a `larger' open subset (in fact, if $\cZ$ comes from a Lagrangian fibration with integral fibers, this is~\cite[Corollary 6.5]{Sac:Lagrangian}).
In the examples discussed in Section~\ref{subsec:ExamplesLagrangian}, the symplectic form of Theorem~\ref{thm:DM} does coincide with the Laza--Sacc\`a--Voisin $2$-form and this is indeed the case.
The modular description, however, is much more complicated.
Only in one case, the variety of lines on a cubic fourfold, a complete answer is known (see Theorem~\ref{thm:Bottini} below).

The symplectic form is easily described at the level of tangent spaces, generalizing the case of curves on K3 surfaces. 
In fact, since $i_*L$ is a stable sheaf, the tangent space to $M_U$ at a point $i_*L$ is given by $\Ext^1(i_*L,i_*L)$ according to standard deformation theory. 
It sits in a short exact sequence 
\[
0 \longrightarrow H^1(Z,\cO_Z) \longrightarrow \Ext^1(i_*L,i_*L) \longrightarrow H^0(Z,\Omega^1_Z) \longrightarrow 0,
\]
where the last map is the differential of $f_U$ at the point $[i_*L]$. 
Of course, this sequence splits, but the splitting is not canonical unless the line bundle $L$ extends uniquely to $X$.
Still, as a first approximation, we construct a symplectic form via the identification $\Ext^1(i_*L,i_*L) \cong H^1(Z,\C)$.
See Example \ref{ex:SympFormLagrangians} for the precise shape of the symplectic form on $M_U$.

\begin{rema}\label{rem:Multiplicativity}
In many cases, the local-to-global spectral sequence 
\begin{equation}\label{eq:Local-to-Global}
E_2^{p,q} = H^{p}(Z,\Omega^q_Z) \implies \Ext^{p+q}(i_*L,i_*L)
\end{equation}
degenerates at $E_2$.
This happens, for instance, if $L \in \Pic^0(Z)$ or if $\dim X = 4$, see \cite{Mladenov:Degeneration} for these results and even more general conditions.  
This gives a (non-canonical) linear isomorphism of graded vector spaces
\[
\Ext^*(i_*L,i_*L) \cong H^*(Z,\C)
\]
which is \emph{not multiplicative}\footnote{This was pointed out to us by Riccardo Carini.} in general, despite this being often claimed in the literature, e.g.,~\cite[Proposition 7.6]{Beckmann:Atomic}.
However, the filtration induced by~\eqref{eq:Local-to-Global} on the algebra $\Ext^*(i_*L,i_*L)$ is compatible with the Yoneda pairing, and the degeneration implies that the associated graded algebra is isomorphic to $H^*(Z,\C)$.
Knowing this suffices for most purposes.
\end{rema}

Choosing a K\"ahler class $\omega \in H^{1,1}(X)$, we can define a symplectic form on $H^1(Z,\C)$ by
\begin{equation}\label{eq:SympFormNaive}
H^1(Z,\C) \times H^1(Z,\C) \longrightarrow \C, \qquad (\alpha,\beta) \longmapsto \int_Z\omega|_Z^{n-1}\cup\alpha \cup \beta. 
\end{equation}
This is skew-symmetric because the cup product is graded commutative and nondegenerate by Poincar\'e duality and the hard Lefschetz theorem.
We see that there are two main differences compared to the case of K3 surfaces:
\begin{itemize}
    \item The symplectic form depends on the choice of a K\"ahler class $\omega\in H^{1,1}(X)$; more generally, we can also choose a class with the hard Lefschetz property. 
    \item Proving that the form is symplectic requires the hard Lefschetz theorem in addition to Serre duality.
\end{itemize}

Since our goal is to obtain a hyper-K\"ahler manifold, we would like the symplectic form to be unique. 
Therefore, it is natural to consider atomic Lagrangians. 

\begin{defi}\label{def:AtomicLagrangians}
A smooth Langrangian subvariety $i \colon Z \hookrightarrow X$ is called {\sf atomic} if 
\begin{enumerate}[{\rm (i)}]
    \item\label{enum:AtomicLagrangians1} The restriction $i^*\colon H^2(X,\Q)\to H^2(Z,\Q)$ has rank $1$.
    \item\label{enum:AtomicLagrangians2} The first Chern class lies in the image $c_1(Z) \in \operatorname{Im}(i^*)$.
\end{enumerate}
\end{defi}

The relation between the second condition and the symplectic form on $M_U$ will be clearer in Section \ref{subsec:ModuliSpaces}.
As we will recall in Section \ref{subsection:defo}, this definition was originally motivated by deformation theory (specifically, deformation theory of the pair $(X,Z)$) and not the uniqueness of the symplectic form.
We should also point out that, while it seems natural to believe that under the atomicity condition there should only be one $2$-form on $M_U$, or better a hyper-K\"ahler compactification, no general result in this direction is known.

\subsection{Hyperholomorphic bundles}\label{subsec:Hyperholomorphic}

Let $X$ be a compact analytic hyper-K\"ahler manifold.
In this section we freely use several analytic concepts (see, e.g.,~\cite{Kob:Stable}).
Another class of sheaves for which we can describe a symplectic form is that of \emph{hyperholomorphic bundles}. 
They were introduced by Verbitsky in~\cite{Verbitsky:Hyperholomorphic}, and the definition depends on the choice of a K\"ahler class $\omega \in H^{1,1}(X)$.

Recall that, to the class $\omega$, one can associate a complex manifold (which is not K\"ahler) with a holomorphic fibration $T_{\omega}(X) \to \P_{\omega}^{1}$ called the {\sf twistor line}.
The base of the fibration $\P_\omega^{1}$ parametrizes complex structures on $X$, which are naturally associated to $\omega$ after choosing a Ricci-flat representative (this can always be achieved thanks to Yau's theorem).
In fact, to a hyper-K\"ahler metric one can associate an action $\H \subset \End(T_{\R}X)$ of the algebra of the quaternions on the real tangent bundle of $X$.
The space $\P_{\omega}^1$ can be identified with the sphere of purely imaginary quaternions with unit norm, each of which defines a complex structure. 

\begin{defi}\label{defi:Hyperholomorphic}
A vector bundle $E$ on $X$ is called $\omega$-{\sf hyperholomorphic} if there exists a hermitian metric $h$ on $E$ such that the Chern connection associated to $h$ is compatible with all the complex structures in $\P_{\omega}^1$. 
\end{defi}

A hyperholomorphic bundle $E$ deforms along the twistor line $T_{\omega}(X)$, but the converse is not true. 
Indeed, in Definition~\ref{defi:Hyperholomorphic} we ask for the existence of the metric $h$, which requires $E$ to be $\mu_{\omega}$-polystable by the Donaldson–Uhlenbeck–Yau Theorem.  
However, a bundle $E$ can deform along the twistor line $T_{\omega}(X)$ without being polystable, for instance if it admits a non-hermitian autodual connection, cf.~\cite{KaledinVerbitsky:Autodual}.

An immediate consequence of the definition is that all tensor operations preserve hyperholomorphic bundles. 
Since $X$ has a Ricci-flat metric, the tangent bundle $T_X$ is stable, and thus it is hyperholomorphic, along with all its tensor powers.
However, finding more examples is a challenging problem, that was open for many years, and was really only solved with the introduction of modular sheaves by O'Grady \cite{OG:Modular}.
The following characterization (see~\cite{Verbitsky:Hyperholomorphic}) is extremely useful to find examples: A vector bundle $E$ on $X$ is $\omega$-hyperholomorphic if and only if it is $\mu_{\omega}$-polystable, and for every stable summand $F \subset E$ we have that $c_1(F) = 0$ and $c_2(F)$ stays of type $(2,2)$ along the twistor line.

If the K\"ahler class $\omega$ is general enough, the general fiber of the twistor line has trivial Picard group. 
This forces the first Chern class of any $\omega$-hyperholomorphic vector bundle to be trivial. 
To circumvent this restriction, which is too strong in practice, we often work with projectively hyperholomorphic bundles.

\begin{defi}
A vector bundle $E$ on $X$ is {\sf projectively hyperholomorphic} if it is $\mu_{\omega}$-polystable, and for every stable summand $F \subset E$ the endomorphism sheaf $\mathscr{E}nd(F)$ is hyperholomorphic.
\end{defi}

We now explain how Verbitsky constructs the symplectic form on $\Ext^1(E,E)$. 
Fix a stable projectively hyperholomorphic vector bundle $E$, and a symplectic form $\sigma \in H^{0}(X,\Omega^2_X)$ on the underlying hyper-K\"ahler manifold.
The natural map $\cO_X \to \mathscr{E}nd(E)$ induces maps in cohomology $H^i(X,\cO_X) \to \Ext^i(E,E)$.
If $\overline{\sigma} \in H^2(X,\cO_X)$ denotes the Hodge conjugate of the symplectic form $\sigma$, we get natural maps 
\[
\Ext^i(E,E) \longrightarrow \Ext^{i+2}(E,E), \qquad \alpha \longmapsto \overline{\sigma} \circ \alpha.
\]
The following result (see~\cite[Theorem 4.2]{Verbitsky:Hyperholomorphic}) is an analogue of the hard Lefschetz theorem, for the operator given by multiplication by $\overline{\sigma}$.
It plays a crucial role in constructing the symplectic form on moduli spaces. 

\begin{theo}[Verbitsky]\label{thm:VerHardLefschetz}
Let $E$ be a projectively hyperholomorphic vector bundle on a hyper-K\"ahler manifold $X$ of dimension $2n$. 
Then, the map
\[
\Ext^i(E,E) \xlongrightarrow{\cong} \Ext^{2n-i}(E,E),\qquad \alpha \longmapsto \overline{\sigma}^{n-1} \circ \alpha 
\]
is an isomorphism.
\end{theo}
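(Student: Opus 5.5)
The plan is to reduce the statement to a hard Lefschetz theorem for an $\mathfrak{sl}_2$-action on the Dolbeault cohomology of the endomorphism bundle, following Verbitsky's analytic approach.

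Set $\cA\coloneqq\mathscr{E}nd(E)$. Then $\Ext^i(E,E)\cong H^i(X,\cA)$, and the map $\alpha\mapsto\overline{\sigma}\circ\alpha$ is cup product with the class $[\overline{\sigma}]\in H^{0,2}(X)$, acting on $H^{0,i}(X,\cA)$. By definition (after passing to stable summands and using that $\mathscr{E}nd$ of a direct sum decomposes into $\mathscr{H}om$'s between stable summands), $\cA$ is an $\omega$-hyperholomorphic bundle. Its Hermite--Einstein (Yang--Mills) connection $\nabla$ is therefore compatible with all complex structures $I,J,K$ of the twistor family, and its curvature is of type $(1,1)$ with respect to each of them.

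\textbf{Step 1: harmonic representatives.} Fix the hyper-K\"ahler metric associated to $\omega$. By Hodge theory for the Dolbeault Laplacian $\Delta_{\bar\partial}$ of $(\cA,\nabla)$, identify $H^{0,i}(X,\cA)$ with the space of $\Delta_{\bar\partial}$-harmonic $\cA$-valued $(0,i)$-forms. The key point is that, because $\nabla$ is compatible with the whole quaternionic structure, the Kähler identities hold simultaneously for $I,J,K$. This yields that $\Delta_{\bar\partial}$ is proportional to the full Laplacian $\Delta_\nabla$. It also yields that $\Delta_\nabla$ commutes with the operators $L_{\bar\sigma}$ (wedge with $\overline{\sigma}$) and its adjoint $\Lambda_{\bar\sigma}$. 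Here $\overline{\sigma}$ is parallel, being built from the parallel Kähler forms $\omega_J,\omega_K$.

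\textbf{Step 2: $\mathfrak{sl}_2$-triple.} The operators $L_{\bar\sigma}$, $\Lambda_{\bar\sigma}$ and $H=[L_{\bar\sigma},\Lambda_{\bar\sigma}]$ act pointwise on $\Lambda^{0,\bullet}T^*X\otimes\cA$. A linear-algebra computation on the symplectic vector space $(T^{0,1}_x)^\vee$ with form $\overline{\sigma}$ shows that $H$ acts on $(0,i)$-forms by the scalar $i-n$; this is the standard Lefschetz $\mathfrak{sl}_2$ for a symplectic form on a $2n$-dimensional space. Since the triple commutes with the Laplacian by Step 1, it preserves harmonic forms. Hence $\bigoplus_i H^{0,i}(X,\cA)$ becomes a finite-dimensional $\mathfrak{sl}_2$-representation with weights $i-n$.

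\textbf{Step 3: conclusion.} Representation theory of $\mathfrak{sl}_2$ gives that $L_{\bar\sigma}^{\,n-i}$ is an isomorphism from weight $i-n$ to weight $n-i$. On cohomology this is cup with $\overline{\sigma}^{\,n-i}$, from $\Ext^i$ to $\Ext^{2n-i}$. This is the stated isomorphism; the exponent written as $n-1$ in the statement corresponds to the case $i=1$ relevant for the symplectic form on $\Ext^1$.

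\textbf{Main obstacle.} The main difficulty lies in Step 1, namely showing that the Kähler-type identities $[\Lambda_{\bar\sigma},\partial_\nabla]\sim\bar\partial^*_\nabla$ hold with a twisted connection. This is exactly where the $(1,1)$-type of the curvature for all complex structures, i.e.\ hyperholomorphicity, is needed. I would first verify it by writing $\overline{\sigma}$ in terms of $\omega_J-i\omega_K$ and using the Kähler identities for $J$ and $K$. These hold for $(\cA,\nabla)$ because $\nabla$ is a Chern connection for a holomorphic structure in each of these complex structures.
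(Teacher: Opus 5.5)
The paper gives no proof of this statement; it only cites Verbitsky. Your overall architecture is the standard one: represent $\Ext^\bullet(E,E)=H^{0,\bullet}(X,\cA)$ by $\Delta_{\bar\partial}$-harmonic forms, let an $\mathfrak{sl}_2$-triple built from $L_{\bar\sigma}$ act on them, and conclude by representation theory. You are also right that the exponent should be $n-i$.

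\textbf{The gap.} Step~1 rests on a false claim, and it is the claim that makes the argument work: $\Delta_{\bar\partial}$ is \emph{not} proportional to $\Delta_\nabla$ unless $\cA$ is flat.
\begin{itemize}
\item For a unitary connection with curvature $F$ of type $(1,1)$ one has $\Delta_\nabla=\Delta_\partial+\Delta_{\bar\partial}$, and by Bochner--Kodaira--Nakano $\Delta_{\bar\partial}-\Delta_\partial=[iF,\Lambda_I]$.
\item On $(0,q)$-forms this zeroth-order term is $\alpha\mapsto-\Lambda_I(iF\wedge\alpha)$. For $1\le q\le 2n-1$ it is, up to the trace term $\Lambda_IF$, the derivation of $\Lambda^{0,q}$ induced by $F$, viewed as an $\End(\cA)$-valued endomorphism of $\Lambda^{0,1}$.
\item Hyperholomorphicity kills $\Lambda_IF$ but not $F$ (think of $E=T_X$).
\end{itemize}
Hence the true statement $[\Delta_\nabla,L_{\bar\sigma}]=[\Delta_\nabla,\Lambda_{\bar\sigma}]=0$ only gives an $\mathfrak{sl}_2$-action on $\ker\Delta_\nabla\cap\Omega^{0,\bullet}(\cA)=\ker\Delta_\partial\cap\ker\Delta_{\bar\partial}$. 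A $\Delta_{\bar\partial}$-harmonic $(0,q)$-form $\alpha$ lies in this space iff $\partial_\nabla\alpha=0$, i.e.\ iff $\langle[iF,\Lambda_I]\alpha,\alpha\rangle=0$. This fails in general: already on a K3 surface, harmonic first-order deformations $a$ of a non-flat instanton typically have $d_\nabla a\neq0$. So in degrees $1\le i\le 2n-1$ you have no $\mathfrak{sl}_2$-action on $H^{0,\bullet}(X,\cA)$. This includes $i=1$, the case used afterwards for the symplectic form.

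Relatedly, the identity you plan to check, $[\Lambda_{\bar\sigma},\partial_\nabla]\sim\bar\partial_\nabla^*$, cannot hold, since the two sides have bidegrees $(1,-2)$ and $(0,-1)$. Also, the Kähler identities hold for any unitary connection, so that is not where hyperholomorphicity enters.

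\textbf{The repair.} Prove directly that $\Delta_{\bar\partial}$ commutes with $L_{\bar\sigma}$ and $\Lambda_{\bar\sigma}$.
\begin{itemize}
\item Since $\overline{\sigma}$ is parallel, $[\bar\partial_\nabla,L_{\bar\sigma}]=0$, hence $[\Lambda_{\bar\sigma},\bar\partial_\nabla^*]=0$.
\item A local computation, valid for any unitary connection, gives $[\Lambda_{\bar\sigma},\bar\partial_\nabla]=c\,\bar\partial_{J,\nabla}^*$ with $c\neq0$. Here $\bar\partial_{J,\nabla}\coloneqq J^{-1}\circ\partial_\nabla\circ J$, which has $I$-bidegree $(0,1)$.
\item Therefore $[\Lambda_{\bar\sigma},\Delta_{\bar\partial}]=c\,(\bar\partial_\nabla\bar\partial_{J,\nabla}+\bar\partial_{J,\nabla}\bar\partial_\nabla)^*$.
\item Write $\bar\partial_\nabla$ and $\bar\partial_{J,\nabla}$ as linear combinations of $\nabla$, $I^{-1}\nabla I$, $J^{-1}\nabla J$, $K^{-1}\nabla K$. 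These pairwise anticommute exactly when $F$ is of type $(1,1)$ for $I$, $J$ and $K$, so the anticommutator vanishes. This is where hyperholomorphicity is really used.
\item Taking adjoints gives $[L_{\bar\sigma},\Delta_{\bar\partial}]=0$.
\end{itemize}
With this, your Steps~2--3 go through on $\Delta_{\bar\partial}$-harmonic forms.

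\textbf{A smaller point.} For polystable $E=\bigoplus F_i$, hyperholomorphicity of the off-diagonal blocks $\mathscr{H}om(F_i,F_j)$ does not follow from that of the $\mathscr{E}nd(F_i)$ alone. You need the equality $\mu_\omega(F_i)=\mu_\omega(F_j)$ coming from polystability. It makes the trace part of the curvature a primitive harmonic $(1,1)$-form, and on a compact hyper-K\"ahler manifold such forms are $SU(2)$-invariant.
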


Recall that there are trace morphisms 
\[
\Tr\colon\Ext^i(E,E)\longrightarrow H^i(X,\cO_X),   
\]
defined by taking cohomology of the usual trace $\mathscr{E}nd(E) \to \cO_X$. 
They satisfy a linearity relation:
\begin{equation*}\label{eq:LinearityTraces}
\Tr(\eta \circ \alpha) = \eta \cup \Tr(\alpha),\quad\text{ for all }\eta \in H^i(X,\cO_X), \alpha \in \Ext^j(E,E).
\end{equation*}

\begin{coro}
The pairing 
\begin{equation}\label{eq:SympFormHyperholomorphic}
\Ext^1(E,E)\times\Ext^1(E,E)\longrightarrow H^2(X,\cO_X)\cong\C,\qquad (\alpha,\beta) \longmapsto\Tr(\alpha \circ \beta)
\end{equation}
is skew-symmetric and nondegenerate. 
\end{coro}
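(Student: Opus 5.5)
Two properties of the pairing need proof: skew-symmetry and nondegeneracy. I treat them separately, using Verbitsky's hard Lefschetz theorem together with Serre duality on $X$.

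\emph{Skew-symmetry.} This is the usual graded commutativity of the trace on the Ext-algebra. For $\alpha\in\Ext^i(E,E)$ and $\beta\in\Ext^j(E,E)$ one has
\[
\Tr(\alpha\circ\beta)=(-1)^{ij}\Tr(\beta\circ\alpha).
\]
To see this, represent $\alpha$ and $\beta$ by Dolbeault forms with values in $\mathscr{E}nd(E)$. Pointwise the trace is cyclic on the endomorphism factor, and swapping the two form factors produces the Koszul sign $(-1)^{ij}$. Taking $i=j=1$ gives $\Tr(\alpha\circ\beta)=-\Tr(\beta\circ\alpha)$. Finally $H^2(X,\cO_X)\cong\C\cdot\overline{\sigma}$, because $X$ is a hyper-K\"ahler manifold and \eqref{eq:HodgeDuality} applies.

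\emph{Nondegeneracy.} Put $\dim X=2n$. Fix $\alpha\neq0$ in $\Ext^1(E,E)$; I need some $\beta$ with $\Tr(\alpha\circ\beta)\neq0$.
\begin{enumerate}[{\rm (1)}]
\item Apply Theorem~\ref{thm:VerHardLefschetz} with $i=1$. The element $\overline{\sigma}^{n-1}\circ\alpha\in\Ext^{2n-1}(E,E)$ is then nonzero.
\item Use Serre duality on $X$, with $\omega_X\cong\cO_X$. The pairing
\[
\Ext^{2n-1}(E,E)\times\Ext^1(E,E)\to H^{2n}(X,\cO_X)\cong\C,\qquad (\gamma,\beta)\mapsto\Tr(\gamma\circ\beta)
\]
is perfect. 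So there is $\beta$ with $\Tr(\overline{\sigma}^{n-1}\circ\alpha\circ\beta)\neq0$.
\item Apply the linearity of traces. It gives
\[
\Tr(\overline{\sigma}^{n-1}\circ\alpha\circ\beta)=\overline{\sigma}^{n-1}\cup\Tr(\alpha\circ\beta).
\]
Hence $\Tr(\alpha\circ\beta)\neq0$ in $H^2(X,\cO_X)$.
\end{enumerate}

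Equivalently, the pairing \eqref{eq:SympFormHyperholomorphic}, followed by cup product with $\overline{\sigma}^{n-1}$ into $H^{2n}(X,\cO_X)$, is the Serre duality pairing precomposed with the isomorphism of Theorem~\ref{thm:VerHardLefschetz}.

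Two points need checking. First, $\overline{\sigma}^{n-1}\colon H^2(X,\cO_X)\to H^{2n}(X,\cO_X)$ must be an isomorphism. I would deduce this from Hodge symmetry, since $\sigma^n$ is nowhere vanishing; in any case it is not needed for the implication used in step (3). Second, the Serre duality pairing must really be given by the trace of the Yoneda composition. I expect this second point to be the main obstacle, and the place where some care with conventions is needed.
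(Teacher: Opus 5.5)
Your proof is correct and follows essentially the same route as the paper. Skew-symmetry comes from the vanishing of the trace on graded commutators. Nondegeneracy comes from Verbitsky's isomorphism $\overline{\sigma}^{n-1}\colon\Ext^1(E,E)\to\Ext^{2n-1}(E,E)$, combined with Serre duality and the linearity $\Tr(\overline{\sigma}^{n-1}\circ\alpha\circ\beta)=\overline{\sigma}^{n-1}\cup\Tr(\alpha\circ\beta)$.

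The only difference is that the paper states the reduction as an equivalence, using that $\overline{\sigma}^{n-1}\colon H^2(X,\cO_X)\to H^{2n}(X,\cO_X)$ is an isomorphism. You rightly note that only one implication is needed.
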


\begin{proof}
    The skew-symmetry is a simple consequence of the fact that $\Tr([\alpha,\beta]) = 0$, where $\alpha,\beta \in \Ext^*(E,E)$ and $[\alpha,\beta]$ denotes the graded bracket. 
    In turn, the equality $\Tr([\alpha,\beta]) = 0$ follows from the familiar statement that the trace of a bracket vanishes at the level of sheaves. 

    On the other hand, cupping with $\overline{\sigma}$ has the hard Lefschetz property on the algebra $H^{\bullet}(X,\cO_X)$, in other words: $\overline{\sigma}^{n-i} \colon H^i(X,\cO_X) \to H^{2n-i}(X,\cO_X)$ is an isomorphism. 
    Therefore, \eqref{eq:SympFormHyperholomorphic} is nondegenerate if and only if 
    \[
        \overline{\sigma}^{n-1} \cup\Tr(a \circ b) = \Tr(\overline{\sigma}^{n-1} \circ a \circ b)
    \]
    is nondegenerate. 
    This follows from Theorem \ref{thm:VerHardLefschetz} and Serre duality. 
\end{proof}

\subsection{Atomic objects}\label{sec:AtomicObjects}

Atomic (or cohomologically $1$-obstructed) objects were introduced independently by Beckmann \cite{Beckmann:Atomic} and Markman \cite{Markman:Stable} to produce more examples of modular bundles (introduced by O'Grady in~\cite{OG:Modular}) and hyperholomorphic bundles. 
This notion strengthens that of a projectively hyperholomorphic bundle by requiring it to be invariant under derived equivalences.

There are several equivalent ways to approach the definition.
The first approach aims at isolating the objects in $\Db(X)$ for which there is a well-defined Mukai vector, with values in a small dimensional vector space, thus mimicking the Mukai Hodge structure in the case of K3 surfaces.
Following Verbitsky \cite{Verbitsky:Thesis} and Looijenga--Lunts \cite{LL:LieAlgebra}, we define the {\sf extended Mukai lattice}
\[
\widetilde{H}(X,\Q) = H^2(X,\Q) \oplus \Q \mathbf{e} \oplus \Q \mathbf{f},    
\]
equipped with the quadratic form $\tilde{q}$, which restricts to the Beauville--Bogomolov--Fujiki form $q_X$ on $H^2(X,\Q)$, and $\mathbf{e},\mathbf{f}$ are isotropic vectors with $q(\mathbf{e},\mathbf{f}) = -1$.  
We denote an element in $\widetilde{H}(X,\Q)$ either as $r\mathbf{e} + \Delta + s \mathbf{f} $ or, in accordance with the notation for K3 surfaces, as $(r,\Delta,s)$. 
We endow it with the Hodge structure determined by $\widetilde{H}(X)^{2,0} = H(X)^{2,0}$ and compatibility with $q$. 
In other words, we simply declare the two classes $\mathbf{e},\mathbf{f}$ to be of type $(1,1)$.
This space, called the {\sf extended Mukai structure}, serves as the ambient space for the extended Mukai vector

To relate this back to the usual Mukai vector, we recall that there is an isomorphism
\[
\mathfrak{so}(\widetilde{H}(X,\Q),q) \cong \mathfrak{g}(X),
\]
where $\mathfrak{g}(X) \subset \End(H^*(X,\Q))$ is the LLV algebra of $X$. 
It is defined as the sub-Lie algebra of $\End(H^*(X,\Q))$ generated by all the $\mathfrak{sl}_2$-triples associated with all possible K\"ahler classes on all possible complex structures on $X$. 

A foundational result by Taelman \cite{Taelman} establishes that the LLV algebra, together with its actions on $H^*(X,\Q)$ and $\widetilde{H}(X,\Q)$, is an invariant of the derived category.

\begin{theo}[Taelman]\label{thm:Taelman}
Let $\Phi \colon \Db(X) \cong \Db(Y)$ a derived equivalence between two hyper-K\"ahler varieties. Then:
\begin{enumerate}[{\rm (a)}]
        \item There exists an isomorphism of lie algebras $\Phi^{\mathfrak{g}} \colon \mathfrak{g}(X) \xlongrightarrow{\sim} \mathfrak{g}(Y)$ making the morphism $\Phi^{H} \colon H^*(X,\Q) \xlongrightarrow{\sim} H^*(Y,\Q)$ equivariant. 
        \item here exists a Hodge isometry 
        \[
          \Phi^{\widetilde{H}} \colon \widetilde{H}(X,\Q) \xlongrightarrow{\sim}\widetilde{H}(Y,\Q) 
        \]
        which makes $ \Phi^\mathfrak{g}$ equivariant. 
\end{enumerate}
\end{theo}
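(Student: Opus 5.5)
Our plan follows Taelman's strategy: first show that the correspondence $\Phi^H$ conjugates the LLV algebra into the LLV algebra, and then extract $\widetilde{H}$ from $\mathfrak{g}$. Write $\Phi=\Phi_{\mathcal{E}}$ as a Fourier--Mukai functor with kernel $\mathcal{E}\in\Db(X\times Y)$, which exists by Orlov's theorem. Set $\Phi^H(x)=p_{Y*}\big(p_X^*x\cdot v(\mathcal{E})\big)$, using the Mukai vector $v(\mathcal{E})=\ch(\mathcal{E})\sqrt{\td_{X\times Y}}$. This is an isomorphism $H^*(X,\Q)\to H^*(Y,\Q)$ whose inverse is induced by the kernel of $\Phi^{-1}$.

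\textbf{Step 1: the Hochschild Lie algebra.} Define $\mathfrak{g}'(X)\subset\End(H^*(X,\Q))$ as the Lie algebra generated by cupping with classes $\lambda\in H^2(X,\Q)$ together with their $\mathfrak{sl}_2$-partners where these exist. Equivalently, use the operators $e_\lambda$ for $\lambda\in H^2$, and their conjugates by the cohomological actions of autoequivalences and of the hyperholomorphic twistor deformations. A cleaner route is to use that $\mathfrak{g}(X)$ is generated by the $e_\lambda$ with $\lambda\in H^2$ and the $\Lambda_\mu$ dual to them. We show that $\mathfrak{g}(X)\otimes\C$ is generated by the action of $\HH^2(X)=H^0(\wedge^2T)\oplus H^1(T)\oplus H^2(\cO)$ through the Hochschild--Kostant--Rosenberg isomorphism twisted by $\sqrt{\td}$, acting on $\HH_*(X)\cong H^*(X,\C)$, together with its Serre-dual counterpart. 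The cap action of $\HH^*$ on $\HH_*$ is functorial under equivalences, by Căldăraru's work together with the twisted HKR compatibility of Calaque--Rossi--Van den Bergh. Hence $\Phi^H$ conjugates $\HH^2(X)$ to $\HH^2(Y)$. The grading operator $h$ is recovered from the $\mathfrak{sl}_2$-triples, and $\Phi^H$ intertwines the Mukai pairing. On $H^2$ of a hyper-K\"ahler manifold, the operators coming from $\HH^2$ span $e_\lambda$ for $\lambda\in H^{1,1}\oplus H^{2,0}\oplus H^{0,2}$, after using the contraction with $\sigma$ to identify $H^1(T)\cong H^{1,1}$ and $\wedge^2T\cong\cO$. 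Closing these operators under the $\mathfrak{sl}_2$-duals gives all of $\mathfrak{g}(X)_\C$. The argument is symmetric under $\Phi^{-1}$, so $\Phi^{\mathfrak g}(\xi)\coloneqq\Phi^H\circ\xi\circ(\Phi^H)^{-1}$ maps $\mathfrak g(X)_\C$ isomorphically onto $\mathfrak g(Y)_\C$. Since $\Phi^H$ is rational, this map is defined over $\Q$. This gives part (a).

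\textbf{Step 2: extracting $\widetilde{H}$.} The space $\widetilde{H}(X,\Q)$ is recovered intrinsically from $\mathfrak{g}(X)\cong\mathfrak{so}(\widetilde H,\tilde q)$ as its standard representation, and the form is determined up to scalar. To avoid the ambiguity of the spin cover and of the choice of scalar, we realize $\widetilde{H}$ inside $H^*$ as follows. By Verbitsky's theorem, $\Sym^n\widetilde{H}(X,\Q)$ contains the Verbitsky component $SH(X,\Q)\subset H^*(X,\Q)$, which is the $\mathfrak g$-submodule generated by $1$. The equivariance from Step 1 gives an equivariant isomorphism between the irreducible $\mathfrak g$-modules appearing in $H^*(X)$ and $H^*(Y)$. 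We would then use the fact that $\mathfrak{so}$-equivariant isomorphisms of these representations come from isometries of the standard representation up to $\pm$ and up to the scalar ambiguity. Here $\mathfrak g$ is the full LLV algebra, and $\Phi^{\mathfrak g}$ is induced by conjugation. To fix signs and scalars, we show that $\Phi^{\mathfrak g}$ is an isomorphism of $\mathfrak{so}(\widetilde H_X)\to\mathfrak{so}(\widetilde H_Y)$ preserving the grading element $h$ up to conjugacy. Taelman fixes this by considering the action of the derived monodromy group, which lands in $O(\widetilde H)$, possibly twisted by a character. This yields an isometry $\Phi^{\widetilde H}$, unique up to sign, inducing $\Phi^{\mathfrak g}$.

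\textbf{Hodge property and main obstacle.} The Hodge property follows because $\Phi^H$ respects Hodge structures, since $v(\mathcal E)$ is algebraic. Hence $\Phi^{\mathfrak g}$ commutes with the Hodge grading operators, which lie in $\mathfrak g$ as the Weil operator $h'$. An $\mathfrak{so}$-isometry that intertwines the Weil operators is a Hodge isometry. The main obstacle is Step 1, namely showing that conjugation by $\Phi^H$ preserves $\mathfrak g$. This requires carefully matching the twisted HKR isomorphisms so that the $\HH^2$ action corresponds exactly to cup product with $H^2$-type operators, including the $\sqrt{\td}$ correction. It also requires checking that the dual operators $\Lambda$ are preserved, which we would obtain from compatibility with the Mukai pairing via adjunction.
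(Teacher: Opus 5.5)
Your overall architecture, transporting the action of $\HH^2$ on $\HH_*$ through the $\sqrt{\td}$-twisted HKR isomorphisms, is the right one. It is Taelman's, and the paper only cites his result. However, Step 1 rests on a false identification.

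Under HKR, a class $\xi\in H^a(X,\wedge^bT_X)$ with $a+b=2$ acts on $H^*(X,\C)=\bigoplus H^q(X,\Omega^p_X)$ by contraction $H^q(X,\Omega^p_X)\to H^{q+a}(X,\Omega^{p-b}_X)$. Only $H^2(X,\cO_X)$ acts by a cup product, and this gives $e_{\bar\sigma}$. The classes in $H^1(X,T_X)$ act in cohomological degree $0$, through $\mathfrak{so}(H^2)\subset\mathfrak g_0$. The Poisson bivector acts in degree $-2$: it is the $\mathfrak{sl}_2$-partner of $e_\sigma$. The isomorphism $\mathrm{HT}^2(X)\cong H^2(X,\C)$ induced by $\sigma$ identifies vector spaces, not operators. (Also, $\wedge^2T_X\cong\Omega^2_X$, not $\cO_X$, once $\dim X>2$.)

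What these operators share is their Hodge weight: each lowers $p-q$ by $2$. The image of $\HH^2(X)$ is precisely the $(-2)$-eigenspace $\mathfrak g(X)^{-2}_\C$ of $\mathrm{ad}(h')$, where $h'$ acts by $p-q$ on $H^{p,q}$. Under $\mathfrak g_\C\cong\mathfrak{so}(\widetilde H_\C)$ this is $\bar\sigma\wedge\widetilde H^{1,1}$, an abelian subalgebra.

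The rest of Step 1 therefore cannot work, because $\Phi^H$ preserves neither the $e_\lambda$, nor $h$, nor the $\Lambda$'s. For the spherical twist by $\cO_S$ on a K3 surface, $\Phi^H$ is the reflection in $(1,0,1)$. It fixes $H^2$, swaps $H^0$ and $H^4$ up to sign, sends $h$ to $-h$, and conjugates $e_\lambda$ into an operator of degree $-2$. Adjunction for the Mukai pairing cannot produce $\Lambda_\lambda$ either. Since $(\lambda x)^\vee=-\lambda x^\vee$ for the Mukai involution $x\mapsto x^\vee$, the Mukai adjoint of $e_\lambda$ is $-e_\lambda$. The operator $\Lambda_\lambda$ is an adjoint only for a metric-dependent Hermitian form, which $\Phi^H$ does not respect.

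The missing idea is to replace $\mathfrak{sl}_2$-duals by complex conjugation. Since $\Phi^H$ is defined over $\Q$, it commutes with complex conjugation. Conjugation by $\Phi^H$ therefore carries $\mathfrak g(X)^{-2}_\C$ onto $\mathfrak g(Y)^{-2}_\C$, and hence also $\mathfrak g(X)^{2}_\C=\overline{\mathfrak g(X)^{-2}_\C}$ onto $\mathfrak g(Y)^{2}_\C$. Together $\mathfrak g^{-2}_\C$ and $\mathfrak g^{2}_\C$ generate $\mathfrak g_\C$, since the brackets $[\sigma\wedge x,\bar\sigma\wedge y]$ with $x,y\in\widetilde H^{1,1}$ span the weight-$0$ part. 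Equivalently, $\mathfrak g(X)$ is the smallest $\Q$-Lie subalgebra of $\End H^*(X,\Q)$ whose complexification contains the image of $\HH^2(X)$, and $\Phi^H$ visibly transports this characterization. You still need a lemma showing that this image lies in $\mathfrak g(X)_\C$ at all; your sketch does not address it.

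In Step 2, invoking the derived monodromy group acting through $O(\widetilde H)$ is circular, since that action is a consequence of this very theorem. What must actually be shown is that $\Phi^{\mathfrak g}$ carries the standard representation of $\mathfrak g(X)$ to that of $\mathfrak g(Y)$. Then Schur's lemma gives a rational similitude $\widetilde H(X,\Q)\to\widetilde H(Y,\Q)$, unique up to $\Q^\times$. Its multiplier must then be normalized using the fact that $\Phi^H$ is an isometry for the Mukai pairing. Your Hodge-theoretic conclusion via $h'$ rather than $h$ is fine once such an isometry is in hand.
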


This theorem, albeit abstractly, provides a link between the full cohomology $H^{*}(X,\Q)$ and $\widetilde{H}(X,\Q)$, which we can use to make sense of an extended Mukai vector. 

\begin{defi}
An object $E \in \Db(X)$ is called {\sf atomic}, if there exists an element $\tilde{v}(E) \in \widetilde{H}(X,\Q)$, called the {\sf extended Mukai vector}, such that 
    \[
        \mathrm{Ann}(v(E)) = \mathrm{Ann}(\tilde{v}(E))
    \]
as sub-Lie algebras of $\mathfrak{g}(X)$. 
\end{defi}

\begin{rema}\label{rem:Normalization}
    The extended Mukai vector is only well defined up to constants. 
    By \cite[Proposition 3.8]{Beckmann:Atomic} and \cite[Theorem 6.13(3)]{Markman:Stable}, if $r(E) \neq 0$, then it can be normalized as 
    \[
        \tilde{v}(E) = \left(r(E),c_1(E),s(E)\right),
    \]
    for some $s(E) \in \Q$. 
    If $r(E) = 0$, it is not yet clear which normalization one should choose.  
\end{rema}

An immediate and powerful consequence of Taelman's theorem is that the image of an atomic object under a derived equivalence $\Phi$ remains atomic, with extended Mukai vector $\tilde{v}(\Phi(E)) = \Phi^{\tilde{H}}(\tilde{v}(E))$.

In higher dimensions, the atomicity condition severely restricts the classical Mukai vector.
To understand more explicitly how the extended Mukai vector translates back to the classical one, we recall another theorem of Verbitsky.
First we introduce the {\sf Verbitsky component} $\mathrm{SH}(X,\Q) \subset H^*(X,\Q)$, defined as the subalgebra generated by $H^2(X,\Q)$.
It forms an irreducible representation of the LLV algebra $\mathfrak{g}(X)$. 
Verbitsky~\cite{Verbitsky:Thesis} showed (see also \cite{Taelman}) that there is a $\mathfrak{g}(X)$-equivariant short exact sequence
\begin{equation*}
0 \longrightarrow \mathrm{SH}(X,\Q) \longrightarrow \Sym^n(\widetilde{H}(X,\Q)) \longrightarrow \Sym^{n-2}(\widetilde{H}(X,\Q)) \longrightarrow 0.
\end{equation*}
By equipping $\Sym^n(\widetilde{H}(X,\Q))$ with the natural bilinear pairing induced by $\tilde{q}$, the first map is an isometry up to scaling \cite[Proposition 3.8]{Taelman}.
Denoting by
\[
T\colon\Sym^n(\widetilde{H}(X,\Q))\longrightarrow\mathrm{SH}(X,\Q)
\]
the orthogonal projection, we have that 
\[
    T\big(\tilde{v}(E)^{(n)}\big) = v(E)_{\mathrm{SH}},
\]
for any atomic object $E \in \Db(X)$, see \cite[Proposition 3.3]{Beckmann:Atomic} and \cite[Theorem 6.13(2)]{Markman:Stable}. 
Here $v(E)_{\mathrm{SH}}$ denotes the orthogonal projection of $v(E)$ onto $\mathrm{SH}(X,\Q)$ with respect to the Mukai pairing.
For hyper-K\"ahler manifolds of $\mathrm{K3}^{[2]}$-type, where $\mathrm{SH}(X,\Q) = H^*(X,\Q)$, this gives the explicit formula
\[
v(E) = r(E) + c_1(E) + \frac{1}{2r(E)}\big(c_1(E)^2 - \tilde{q}(\tilde{v}(E),\tilde{v}(E))\mathsf q_2\big) + \frac{s(E)}{r(E)}\lambda^{\vee} + \frac{s(E)^2}{2r(E)}\mathsf{pt},
\]
if $r(E)\neq0$, where $\mathsf{q}_2= \frac{1}{25}\mathsf{q}^{\vee}\in H^4(X,\Q)$.

\subsection{The obstruction map}\label{subsection:defo}

An alternative motivation for the notion of atomic objects comes from deformation theory. 
If $X$ is a smooth projective variety, we set 
\[
    \mathrm{HT}^2(X) = H^2(X,\cO_X) \oplus H^1(X,T_X) \oplus H^0(X,\wedge^2T_X).
\]
By the Hochschild--Konstant--Rosenberg isomorphism, this space is isomorphic to the second Hochschild cohomology $\mathrm{HH}^2(X)$, and it controls the first order noncommutative deformations of $X$ (we will discuss more about this in Section~\ref{sec:categories}).
Specifically: $H^1(X,T_X)$ parametrizes the classical deformations, $ H^0(X,\wedge^2T_X)$ the deformations of the product on the structure sheaf $\cO_X$, and $H^0(X,\cO_X)$ the gerby deformations.
For any deformation $\cD_{\eta}$ parametrized by $\eta \in \mathrm{HT}^2(X)$, and any object $E \in \Db(X)$, the obstruction to lifting $E$ to an object of $\cD_{\eta}$ is controlled by the \emph{obstruction map}:
\begin{equation*}
\mathrm{obs}_E\colon\mathrm{HH}^2(X) \longrightarrow \Ext^2(E,E), \qquad \eta \longmapsto \eta_E,
\end{equation*}
where, if we think of $\eta \in \mathrm{HH}^2(X)$ as a natural transformation $\eta_E \colon \id_E \to \id_E[2]$, then $\eta_E$ is the evaluation at $E$. 
By \cite{Toda:Deformations}, we can lift an object $E \in \Db(X)$ to one in $\cD_{\eta}$ if and only if $\mathrm{obs}_E(\eta) = 0$. 

For any object $E \in \Db(X)$, the \emph{cohomological obstruction map} is 
\[
\mathrm{obs}_E^H\colon\mathrm{HT}^2(X)\longrightarrow H^{*}(X,\C), \qquad \eta \longmapsto \eta \lrcorner \ch(E).
\]
Roughly speaking, the cohomological obstruction map controls whether the Chern character $\ch(E)$ remains of Hodge type along the first order deformation given by $\eta$. 
One can completely characterize the atomic objects in terms of the obstruction map (see~\cite[Theorem 1.2]{Beckmann:Atomic}).

\begin{theo}[Beckmann]\label{thm:Atomic1Obstructed}
An object $E \in \Db(X)$ is atomic if and only if the cohomological obstruction map has rank $1$. 
\end{theo}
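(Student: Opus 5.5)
The plan is to translate both conditions into statements about how the LLV algebra $\mathfrak{g}(X)$ acts on $v(E)$, and then compare them inside $\widetilde{H}(X,\Q)$. The key input is that $\mathrm{HT}^2(X)$ acts on $H^*(X,\C)$ through contraction, by the HKR isomorphism twisted by $\sqrt{\td_X}$ (Mukai vector rather than $\ch$, which only changes things by an invertible $\mathfrak{g}$-invariant factor). Under $\mathfrak{so}(\widetilde{H}(X,\C))\cong\mathfrak{g}(X)_\C$, this action is identified with the action of a distinguished subspace. That subspace is $\mathbf{f}\wedge\widetilde{H}$-type elements, or rather the span of $e_{\overline\sigma}$-type operators: $H^2(\cO_X)$ acts as $\mathbf{e}\wedge\overline{\sigma}$, $H^1(T_X)\cong H^{1,1}$ via $\sigma$ acts as $\sigma\wedge\lambda$, and $H^0(\wedge^2T_X)$ acts as $\sigma\wedge\mathbf{f}$.

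Concretely, I would first show that $\mathrm{HT}^2(X)$ maps isomorphically onto the image of the map $\sigma\wedge(-)\colon\widetilde{H}(X,\C)/\C\sigma\to\wedge^2\widetilde{H}(X,\C)\cong\mathfrak{so}$. This is a bookkeeping check on the three summands, using the Looijenga--Lunts/Verbitsky description of $\mathfrak{g}(X)$ in terms of $e_\lambda$, $h$ and $\Lambda_\lambda$. Taelman's compatibility, Theorem~\ref{thm:Taelman}, ensures the identification is canonical enough.

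With this in hand, the obstruction map becomes $w\mapsto(\sigma\wedge w)\cdot v(E)$ for $w\in\widetilde{H}/\C\sigma$. Its kernel is $\{w:\sigma\wedge w\in\mathrm{Ann}(v(E))\}$.

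For the forward direction, if $E$ is atomic, then $\mathrm{Ann}(v(E))=\mathrm{Ann}(\tilde v)$. The annihilator of a vector $\tilde v$ in the standard representation contains $\sigma\wedge w$ exactly when $(\sigma\wedge w)\tilde v=\tilde q(w,\tilde v)\sigma-\tilde q(\sigma,\tilde v)w=0$. Since $\tilde v$ is of type $(1,1)$, we have $\tilde q(\sigma,\tilde v)=0$, so the condition reduces to $w\perp\tilde v$. This is a hyperplane, so the rank is $1$.

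For the converse, rank $1$ gives a hyperplane $W\subset\widetilde{H}$ with $\sigma\wedge W\subset\mathrm{Ann}(v(E))$. I would define $\tilde v$ as the $\tilde q$-orthogonal of $W$. Then I would argue that $\mathrm{Ann}(v(E))$, being a Lie subalgebra defined over $\Q$, also contains $\overline\sigma\wedge\overline W$ and the brackets of these two families. These brackets generate $\wedge^2(\tilde v^\perp)=\mathrm{Ann}(\tilde v)$, a maximal-rank subalgebra $\mathfrak{so}(\tilde v^\perp)$. Finally, $\mathrm{Ann}(v(E))\neq\mathfrak{g}$ because $\mathrm{obs}\neq0$, and $\mathfrak{so}(\tilde v^\perp)$ is maximal among proper subalgebras stabilizing a line-type datum. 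Hence the two annihilators are equal.

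The main obstacle is this generation-and-maximality step. One needs the rationality of $\mathrm{Ann}(v(E))$ together with a careful bracket computation, handling separately the isotropic case $\tilde q(\tilde v)=0$, where the annihilator is not reductive. I would first try to handle that case by deforming $X$ along the twistor line, or by direct computation in a basis adapted to $\sigma,\overline\sigma$.
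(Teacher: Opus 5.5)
The paper gives no proof of this statement; it quotes \cite[Theorem 1.2]{Beckmann:Atomic}. Your overall route is the natural one: realize the $\mathrm{HT}^2(X)$-action inside $\mathfrak{g}(X)_\C\cong\mathfrak{so}(\widetilde H(X,\C))$, then do linear algebra in $\widetilde H$. But your first step, as stated, is false, because the dimensions do not match. The space $\sigma\wedge(\widetilde H/\C\sigma)$ has dimension $b_2(X)+1$, whereas $\dim\mathrm{HT}^2(X)=1+h^{1,1}(X)+1=b_2(X)$. The surplus direction is $\sigma\wedge\overline\sigma$. This is a multiple of the Weil operator: it preserves Hodge bidegrees, so it is not contraction by any class in $\mathrm{HT}^2(X)$. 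Your summand-by-summand list also mixes $\sigma$ and $\overline\sigma$. All three summands shift $p-q$ by the same amount, and the correct image is $\overline\sigma\wedge\widetilde H^{1,1}$, where $\widetilde H^{1,1}=\C\mathbf e\oplus H^{1,1}(X)\oplus\C\mathbf f$. Concretely, $H^2(X,\cO_X)$ goes to $\C\,\overline\sigma\wedge\mathbf f$ (this is $e_{\overline\sigma}$), $H^1(X,T_X)$ goes to $\overline\sigma\wedge H^{1,1}(X)$, and $H^0(X,\wedge^2T_X)$ goes to $\C\,\overline\sigma\wedge\mathbf e$. Your forward direction survives this correction, provided you add two points. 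First, justify $\tilde v\in\widetilde H^{1,1}$: since $v(E)$ is a Hodge class, $\sigma\wedge\overline\sigma\in\mathrm{Ann}(v(E))_\C=\mathrm{Ann}(\tilde v)_\C$. Second, note that $\tilde q(-,\tilde v)$ is nonzero on $\widetilde H^{1,1}$, because $\tilde q$ is nondegenerate there. In the converse, however, rank one now only gives a hyperplane $K\subset\widetilde H^{1,1}$ with $\overline\sigma\wedge K\subset\mathrm{Ann}(v(E))_\C$. The element $\sigma\wedge\overline\sigma$ must be added to the annihilator separately, using that $v(E)$ is Hodge. Finally, $\sqrt{\td_X}$ is not $\mathfrak g$-invariant ($h$ does not kill it), so your sentence passing from $\ch(E)$ to $v(E)$ is not a valid justification. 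The clean formulation reads the obstruction map through the $\td^{1/2}$-twisted HKR isomorphisms, under which it is $\mu\mapsto\mu\lrcorner v(E)$.

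The converse then has two genuine holes. The first concerns reality and rationality. You need the brackets
\[
[\overline\sigma\wedge a,\sigma\wedge b]=\pm\bigl(\tilde q(\sigma,\overline\sigma)\,a\wedge b-\tilde q(a,b)\,\sigma\wedge\overline\sigma\bigr),\qquad a\in K,\ b\in\overline K .
\]
Together with $\sigma\wedge\overline\sigma\in\mathrm{Ann}(v(E))_\C$, they give $a\wedge b\in\mathrm{Ann}(v(E))_\C$. Let $u$ span $K^\perp\cap\widetilde H^{1,1}$. These brackets generate $\mathrm{Ann}(u)=\wedge^2(u^\perp)$ only if $\overline K=K$. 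Moreover, atomicity requires $\tilde v\in\widetilde H(X,\Q)$, not merely a complex line. Both points can be proved. Suppose $\overline K\neq K$, and pick $a,c\in K$ with $\tilde q(a,c)\neq0$; these exist because $K$ cannot be totally isotropic. Bracketing $a\wedge b$ with $\overline\sigma\wedge c$ puts $\overline\sigma\wedge\overline K$ into the annihilator. Hence all of $\overline\sigma\wedge\widetilde H^{1,1}$ lies there, i.e.\ $\mathrm{obs}^H_E=0$, a contradiction. For rationality: once $\mathrm{Ann}(v(E))_\C=\mathrm{Ann}(u)$, the line $\C u$ is the common kernel on $\widetilde H$ of the $\Q$-subalgebra $\mathrm{Ann}(v(E))$, so it is spanned by a rational vector.

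The second hole is that your maximality step fails when $\tilde q(u)=0$, which is the case for $\cO_x$ and for fibers of Lagrangian fibrations. Then the parabolic subalgebra $\mathfrak p=\{A:Au\in\C u\}$ lies strictly between $\mathrm{Ann}(u)$ and $\mathfrak g(X)_\C$. No deformation argument is needed to close this; the missing step is Lie-theoretic. The map $A\mapsto Au$ identifies $\mathfrak g(X)_\C/\mathrm{Ann}(u)$ with $u^\perp$ as $\mathrm{Ann}(u)$-modules. This module is irreducible if $\tilde q(u)\neq0$. If $\tilde q(u)=0$, its only proper nonzero submodule is $\C u$. So the only subalgebras containing $\mathrm{Ann}(u)$ are $\mathrm{Ann}(u)$, $\mathfrak g(X)_\C$, and, in the isotropic case, $\mathfrak p$. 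The case $\mathfrak p$ is excluded because $\mathfrak p$ contains a Borel subalgebra. A vector killed by a Borel subalgebra in a finite-dimensional representation is $\mathfrak g$-invariant, which would again force $\mathrm{obs}^H_E=0$.
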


We say that an object $E$ is (cohomologically) $1$-obstructed if the rank of its (cohomological) obstruction map is $1$.
For a sheaf $E$ on $X$ of rank $r(E)\neq0$, the image of $H^2(X,\cO_X)$ via the cohomological obstruction map is always nonzero.
In particular, the cohomological obstruction map has at least rank $1$. 
So we can think of the atomicity condition as saying that the Chern character deforms maximally with respect to all noncommutative deformations of $X$. 

As expected from the definitions, if an object is $1$-obstructed it is also cohomologically $1$-obstructed~\cite[Theorem 1.3]{Beckmann:Atomic}.
Beckmann conjectures~\cite[Conjecture A \& Corollary 4.7]{Beckmann:Atomic} that for simple atomic objects the reverse implication also holds. 

\begin{conj}[Beckmann]\label{Con:Atomic1Obs}
Let $E \in \Db(X)$ be a simple atomic object. Then $E$ is $1$-obstructed.
\end{conj}

\subsection{Examples of atomic objects}\label{subsec:ExamplesAtomic}
We now review several key examples of atomic objects, starting with the simplest cases before returning to Lagrangian subvarieties and hyperholomorphic bundles later on. 

\subsubsection*{First examples}
An immediate source of $1$-obstructed objects is given by $\P^n$-objects (introduced in \cite{HT:Pn}), since their $\Ext^2$ is one-dimensional.
Recall that an object $E \in \Db(X)$ is a  $\P^n$-{\sf object} if there is an isomorphism of graded algebras $\Ext^{*}(E,E) \cong H^{*}(\P^n,\C)$\footnote{Note that here we are assuming that $X$ has trivial canonical bundle. Otherwise, one should also require compatibility with the Serre functor.}. 

The simplest $\P^n$-objects are the line bundles.  
For the structure sheaf $\cO_X$, the extended Mukai vector can be computed to be
\[
\tilde{v}(\cO_X)= \left(1,0,r_X\right),
\]
consistently with Remark~\ref{rem:Normalization}.
Here $r_X$ is a structure constant, for whose definition we refer to \cite{Beckmann:Extended}, which can be computed to be 
\[
r_X = 
\begin{cases}
\frac{n + 3}{4},&\text{for }\mathrm{K3^{[n]}}\text{ and }\mathrm{OG10}\\ 
\frac{n+1}{4},&\text{for }\mathrm{Kum_n}\text{ and }\mathrm{OG6}.
\end{cases}
\]
For a more general line bundle $L \in \Pic(X)$, the extended Mukai vector is 
\[
\tilde{v}(L)=\left(1,c_1(L),\textstyle\frac{1}{2}q(c_1(L)) + r_X\right). 
\]
The computation goes through Theorem~\ref{thm:Taelman}, and the computation of the action of the autoequivalence $-\otimes L$ on $\widetilde{H}(X,\Q)$, given by the familiar formula
\[
B_L(r,\Delta,s)=\left(r,\Delta + rc_1(L),s+q(\Delta,L) + r\textstyle \frac{1}{2}q(c_1(L))^2\right).
\]

Skyscraper sheaves of point $\cO_x$ are another source of atomic objects.
Since their Mukai vector is $v(\cO_x) = \mathrm{pt} \in H^{4n}(X)$, it is immediate to see that 
\[
\ker\mathrm{obs}^H_{\cO_x}=H^2(X,\cO_X)\oplus H^1(X,T_X) 
\] 
So that $\cO_x$ is atomic, and its extended Mukai vector is $\tilde{v}(\cO_x) = (0,0,1)$.
In fact, one can also check that $\cO_x$ is actually $1$-obstructed.

\subsubsection*{Lagrangian subvarieties}
%%Markman Thm 1.1, rank 0 atomic sheaves are supported on Lagrangians or points.
One of the main results of \cite{Beckmann:Atomic,Markman:Stable} is the following.
It relates the atomicity of $\cO_Z$ to the atomicity of $Z \subset X$, in the sense of Definition \ref{def:AtomicLagrangians}.  
Let $i\colon Z\hookrightarrow X$ be a smooth Lagrangian subvariety. 
Then $\cO_Z$ is atomic if and only if $Z$ is atomic.
Moreover, if $\lambda$ is a generator of ${\ker i^*}^{\perp}$, where $i^*\colon \NS(X) \to \NS(Z)$ is the restriction map, and we write $c_1(Z) = ti^*\lambda$ for some $t \in \Q$, the extended Mukai vector is 
\[
\tilde{v}(\cO_Z) = \left(0,\lambda,\textstyle \frac{1}{2}tq(\lambda)\right).
\]

We can now better explain the conditions in Definition \ref{def:AtomicLagrangians}, in light of Theorem \ref{thm:Atomic1Obstructed}.
We denote by $\mathrm{Def}(X)$ the deformation space of $X$, and by $\Def(X,Z)$ the deformation space of the pair $(X,Z)$.
There is a natural forgetful map $\Def(X,Z) \to \Def(X)$, whose image is contained in the locus $\Def(X,[Z])$ where the class $[Z]$ remains of Hodge type. 
The tangent space at the distinguished point of $\Def(X)$ is identified with $H^1(X,T_X)$ via the Kodaira--Spencer map, and it can be further identified with $H^{1,1}(X)$ via the isomorphism $T_X\cong\Omega^1_X$ given by the symplectic form. 
The fundamental result of Voisin~\cite{Voisin:Lagrangians} tells us that the map 
\[
\mathrm{Def}(X,Z)\longrightarrow\mathrm{Def}(X,[Z])
\]
is surjective and $\mathrm{Def}(X,[Z])$ is smooth.
Its tangent space is identified with the kernel of the restriction map
\[
\ker i^* \subset H^{1,1}(X).
\]

The restriction map $i^*$ has at least rank $1$, as a K\"ahler class restricts to a K\"ahler class.
So, condition~\ref{enum:AtomicLagrangians1} requires that $Z$ (or equivalently $[Z]$) deform maximally along \emph{commutative} deformations of $X$.
Condition~\ref{enum:AtomicLagrangians2} deals with the noncommutative deformations, as explained in~\cite{Beckmann:Atomic}.

\begin{rema}
    If we decompose the second integral cohomology according to its algebraic and transcendental part $H^2(X,\Q) = \NS(X)_{\Q} \oplus T(X)_{\Q}$, we see that the restriction morphism $i^*$ vanishes on the transcendental $T(X)_{\Q}$. 
    This follows from the fact that $T(X)_{\Q}$ is the smallest sub Hodge structure of $H^2(X,\Q)$ whose complexification contains the symplectic form $\sigma$.
    Therefore, only the Néron--Severi group contributes to the image of $i^*$.
\end{rema}

\begin{exam}\label{ex:AtomicLagrangians}
Examples of atomic Lagrangians include:
\begin{enumerate}[{\rm (a)}]
    \item Projective spaces $\P^n \subset X$, as they are $\P^n$-objects. 
    \item Smooth fibers of a Lagrangian fibration $\pi \colon X \to \P^n$. In this case, $\tilde{v}(\cO_Z) = (0,f,0)$, where $f =c_1(\pi^*(\cO_{\P^n}(1)))$.
    \item For any smooth cubic fourfold $W \subset \P^5$, as we mentioned before, the variety $F(W)$ of lines contained in $W$ is a smooth hyper-K\"ahler fourfold of K3$^{[2]}$-type. The surface $F(W_H) \subset F(W)$ of lines contained in a smooth hyperplane section is a smooth atomic Lagrangian subvariety, see \cite[Section 8.1.3]{Beckmann:Atomic}. 
    Its extended Mukai vector is 
    \[
        \tilde{v}(\cO_{F(W_H)}) = (0,h,-3),
    \]
    where $h$ is the class of the Pl\"ucker polarization on $F(W)$.
    \item A double EPW sextic $X$ is a hyper-K\"ahler fourfold equipped with $2:1$ morphism to a sextic hypersurface in $\P^5$ (\cite{OG:EPW}).
    The sextic is singular along a smooth surface if $X$ is general enough, and the covering involution fixes an isomorphic copy of this surface $Z \subset X$. 
    This surface is atomic and rigid, see \cite[Section 8.1.4]{Beckmann:Atomic}.
    Its extended Mukai vector is 
    \[
        \tilde{v}(\cO_Z) = (0,h,-3),
    \]
    where $h$ is the polarization on $X$.
\end{enumerate}
\end{exam}

Not all smooth Lagrangian subvarieties are atomic, here are some examples.

\begin{exam}
Let $S$ be a K3 surface and let $C\subset S$ be a smooth curve of genus $g(C)>1$. 
Then the symmetric product $i\colon C^{(2)} \subset S^{[2]}$ is a Lagrangian surface, which is not atomic since the restriction $i^*$ has rank $\geq2$ (by taking the restrictions of the exceptional divisor, which restricts to the diagonal, and of an ample class).
\end{exam}

\begin{exam}
Let $(S,H)$ be a polarized K3 surface of genus $g\geq2$, with $\Pic(S)=\Z\cdot H$.
Let $r,d$ be two coprime integers with $r \geq 2$, and consider the Beauville--Mukai system
\[
M \coloneqq M_{S,H}(\bv)\to|rH|,\qquad \bv=(0,rH,d+r(1-g)).
\]
For a smooth curve $C\in|H|$, the fiber over the nonreduced curve $rC$ has multiple components, one of which is the moduli space $N\coloneqq \cU_{C}(r,d)$ of stable bundles of rank $r$ and degree $d$ supported on $C$. 
It is a smooth Lagrangian subvariety $i\colon N\hookrightarrow M$ and the restriction $i^*$ has rank $1$; this follows easily from the fact that the class $f\in\NS(M)$ corresponding to the Lagrangian fibration restricts trivially to $N$, since $N$ is contained in a fiber.
However, $\omega_N \not\in \Im i^* $. 
Indeed, there is a commutative diagram 
\[\begin{tikzcd}
	{} & {\widetilde{H}(S,\Z)_{\mathrm{alg}} \supset\bv^{\perp}} & {\NS(M)} \\
	& {H^{\mathrm{ev}}(C,\Z) \supset (r,d+r(1-g))^{\perp}} & {\NS(N)}
	\arrow["{\lambda_S}", from=1-2, to=1-3]
	\arrow[from=1-2, to=2-2]
	\arrow["{i^*}", from=1-3, to=2-3]
	\arrow["{\lambda_C}", from=2-2, to=2-3]
\end{tikzcd}\]
where the vertical morphisms are the restrictions, and the horizontal ones are the Donaldson--Mukai morphisms. 
On the other hand, by \cite[Théorème D]{DN:VectorBundles} we have 
\[
\NS(N) = \Im \lambda_C \oplus p^*\NS(J^d(C)),
\]
where $p\colon N \to J^d(C)$ denotes the determinant map $F \mapsto \det(F)$, and by \cite[Théorème E]{DN:VectorBundles} $\omega_N$ does not lie in the image of $\lambda_C$.  
So, $N$ is an example of a Lagrangian which has restriction map of rank $1$, but it is not atomic.  
\end{exam}

\subsubsection*{Immersed Lagrangians}

Since smooth Lagrangian subvarieties of hyper-K\"ahler manifolds are notoriously difficult to construct, one can allow some singularities while the theory still works 
as in the smooth case.
Recall that, on a projective hyper-K\"ahler manifold $X$, we say that a finite unramified morphism $j\colon Z\to X$ from a smooth projective variety is an {\sf immersed Lagrangian} if its image is a Lagrangian subvariety.
As demonstrated in \cite{GL:Immersed}, the theory of atomicity for immersed Lagrangians is almost perfectly analogous to that of their smooth counterparts
In particular, by~\cite[Theorem 4.10]{GL:Immersed}, we have a criterion for when $j_*\cO_Z$ is atomic in terms of the pull-back map and of the first Chern class of $Z$.

\begin{exam}\label{ex:IM}
Let $Y \subset \P^6$ be a smooth cubic fivefold, and $W\subset Y$ a smooth cubic fourfold. 
We assume that $W$ does not contain any planes.
If $F_2(Y)$ denotes the surface of planes contained in $Y$, the morphism
\[
j \colon F_2(Y) \longrightarrow F(W),\qquad\Pi\longmapsto\Pi\cap W
\]
is well defined.
For general choices it defines an immersed Lagrangian subvariety, by \cite{IM:Cubic}.
In fact, it is the normalization of its image, which has $47061$ isolated singular points, see \cite[Proposition 7]{IM:Cubic}. 
An adjuction formula computation yields $c_1(F_2(Y)) = -3j^*h$, so the extended Mukai vector is
\[
\tilde{v}(j_*\cO_{F_2(Y)}) = (0,h,-9) \in \widetilde{H}(F(W),\Q).
\]
Lastly, we also have $j_*[F_2(Y)] = 63[F(W_H)] \in H^4(F(W),\Z)$, by \cite[Lemma 6]{IM:Cubic}.

A similar situation exists for Gushel--Mukai fourfold/fivefold and the associated double EPW sextic. We will not review this here, and refer to~\cite{OG:EPW,IM:GM,MOS:GM} for more details.
\end{exam}

\subsubsection*{Atomic vector bundles}
The most important result about atomic vector bundles is that they are projectively hyperholomorphic.
More precisely, for a hyper-K\"ahler manifold $X$ and a K\"ahler class $\omega\in H^{1,1}(X)$, if $E$ is $\mu_{\omega}$-polystable atomic vector bundle, then it is $\omega$-projectively hyperholomorphic. 

\begin{rema}
The notion of projectively hyperholomorphic bundles depends on the choice of the K\"ahler form $\omega$, so the reverse implication is not true.
In fact, even if the discriminant $\Delta(E) = c_2(\mathcal{E}nd(E))$ remained of Hodge type with respect to any deformation of $X$, it still would not be atomic. 
The bundle $E$ would be projectively hyperholomorphic with respect to any polarization for which it is stable, and thus deform to any deformation of $X$, but there could be nontrivial obstructions to deform $E$ along the noncommutative deformations of $X$. 
The most striking example of this is the tangent bundle $T_X$, as shown in \cite{Beckmann:Atomic}, but see also \cite{OG:Modular3} for more examples.
\end{rema}

This result is the central motivation behind the paper \cite{Markman:Stable}, where the following strategy to construct examples of hyperholomorphic bundles is proposed: start with atomic objects, transform them into locally free sheaves via derived equivalences, and check slope stability. 
In many cases, the crucial input for the stability is given by the results of \cite{OG:Modular}, especially on the notion of suitable polarization. 

%%Say something about Sym^n and SH, and projection of discriminant
%% Markman's examples, O'Grady's examples, My examples

\begin{exam}%[{{\cite[Theorem 1.4]{OG:Modular}}}]
The first examples we wish to discuss were constructed by O'Grady \cite{OG:Modular,OG:Modular2} on hyper-K\"ahler manifolds of $\mathrm{K3}^{[n]}$-type. They are $\P^n$-objects, so atomicity is automatic.
We discuss them in the $n=2$ case, the general case works similarly with more numerical assumptions.
If $(X,h)$ is a general polarized hyper-K\"ahler manifold of $\mathrm{K3}^{[2]}$-type, and $r$ is a positive integer, under suitable numerical assumptions there exists a unique slope stable vector bundle $E$ on $(X,h)$ such that 
    \[
        r(E) = r^2, \ c_1(E) = rh, \  \Delta(E) = \frac{r(r-1)}{12}c_2(X)
    \]
    and $\Ext^{\bullet}(E,E) \cong H^{\bullet}(\P^n,\C)$.
These bundles are atomic, and their extended Mukai vector can be computed using the formula 
\begin{equation*}
    \Delta(E) = \left(\tilde{q}(\tilde{v}(E)) + 2r_Xr(E)^2\right)\mathsf{q}_2,
\end{equation*}
established in~\cite{Bot:Towards}.
If one specializes $X$ to a Hilbert scheme $S^{[n]}$, for a suitable K3 surface $S$, these bundles are in the orbit of the structure sheaf $\cO_{S^{[n]}}$ under the action of derived autoequivalences of $\Db(S^{[n]})$.
\end{exam}

\begin{exam}
Other examples of atomic locally free sheaves are constructed by Markman \cite[Theorem 1.6]{Markman:Stable}. 
They are obtained by deforming locally free stable sheaves on certain Hilbert schemes $S^{[n]}$, which are in the orbit of the skyscraper sheaf of a point $\cO_x$ under derived equivalences. 
If $E$ is any one of such sheaves, their extended Mukai vector is 
    \[
        v(E) = \left(n!r^n,\lambda,\frac{1}{2n!r^n}q(\lambda)\right),
    \]
    where $\lambda = c_1(E)$, see \cite[Lemma 11.3]{Markman:Stable}.
\end{exam}

\begin{rema}
Since these bundles are projectively hyperholomorphic with respect to an open cone of K"ahler classes, they can be deformed as twisted sheaves on any hyper-K"ahler variety of type $\mathrm{K3}^{[n]}$, as explained, for instance, in \cite[Theorem 3.4]{Markman:Stable}. As such, they give rise to many twisted bundles on a very general hyper-K"ahler variety, and they can be used to study the Period--Index problem. This strategy was successfully applied using Markman's bundles in \cite{HMSYZ:PeriodIndex} and using O'Grady's bundles in \cite{BH:PeriodIndex} to obtain the bounds $\mathrm{ind}(\alpha) \mid \mathrm{per}(\alpha)^{2n}$ and $\mathrm{ind}(\alpha) \mid \mathrm{per}(\alpha)^n$ for ``non-special'' Brauer classes, respectively.
\end{rema}

\begin{exam}
Other examples are constructed in \cite{Bot:Towards,Bot:OG10} by deforming a cubic fourfold $W$ until its variety of lines $F(W)$ acquires a Lagrangian fibration.
Then transforming line bundles supported on $F(W_H)$ into (possibly twisted) locally free sheaves via a derived equivalence induced by the Poincar\'e sheaf. 
A similar approach has been applied to the immersed Lagrangians subvarieties in \cite{GL:Immersed}.
\end{exam}

\subsection{Moduli spaces}\label{subsec:ModuliSpaces}
As anticipated, moduli spaces of semistable sheaves on higher dimensional hyper-K\"ahler manifolds are not as well behaved as those for K3 surfaces in general. 
Here are some explicit examples to highlight these situations.

\begin{exam}
As previously noted, the tangent bundle $T_X$ is slope stable, however
\[
\Ext^2(T_X,T_X) \cong H^2(X,\mathcal{E}nd(T_X)) \cong H^2(X,\Omega_X^{\otimes 2}) \supset H^{2,2}(X),
\]
where the second isomorphism is induced by the symplectic form $T_X \cong \Omega^1_X$.
It follows from Verbitsky's description of the Verbitsky component that $h^{2,2}(X) \geq \binom{b_2(X)-2}{2}$, which, for example, amounts to $\binom{21}{2} = 210$ for varieties of $\mathrm{K3}^{[2]}$-type. Thus $\Ext^2(T_X,T_X)_0$ is far from trivial.

Although one might expect that $\operatorname{Ext}^2(T_X, T_X)_0$ is not the “true" obstruction space and that one should instead consider the kernel of the Buchweitz--Flenner semiregularity map~\cite{BF:Semiregularity}, this kernel is usually still too large to provide useful control. 
For $T_X$ on a $\mathrm{K3}^{[2]}$-type manifold, the target of the semiregularity map $\operatorname{Ext}^2(T_X, T_X) \to \bigoplus_{p=0}^2 H^{p+2}(X, \Omega_X^p)$ is isomorphic to $H^2(X, \mathbb{C})$, so the kernel's dimension remains at least $h^{2,2}(X) - b_2(X)$.
\end{exam}

\begin{exam}
Let $(X,H)$ be a double EPW sextic, where $H$ is the polarization obtained by pull-back from $\P^5$. 
It comes with an involution $\iota\in\Aut(X)$, whose fixed locus is a Lagrangian surface $i \colon Z \hookrightarrow X$ with $q(Z) =0$. 
As explained in \cite[Remark 1.4]{Ferretti:Chow}, there is a $2$-torsion class $\tau \in \NS(Z)$, defined by 
\[
c_1(Z) = -H|_Z +\tau.
\]
If $L \in \Pic(Z)$ is a line bundle with $c_1(L) = \tau$, then $i_*\cO_Z$ and $i_*L$ are stable sheaves on $X$, with respect to any polarization, and $v(i_*L) = v(i_*\cO_Z)$. 
More than that, they also have the same class in the topological K-theory $K_0^{\mathrm{top}}(X)$, as it is torsion-free.
However, since $q(Z) = 0$, they are both rigid, so that they define two disjoint reduced points in the moduli space of stable objects. 
In particular, fixing the Mukai vector (or even the class in $K_0^{\mathrm{top}}(X)$) does not suffice to isolate a connected component.
\end{exam}

\begin{exam}
Assume that $\dim X >2$, and let $H \subset X$ be an ample line bundle. 
Assume that every divisor $D \in |H|$ is integral (this happens e.g.\ if $\rho(X) = 1$). 
Then $H^1(D,\cO_D)=0$ and the morphism
\[
|H|\longrightarrow  M_{X,H}(v(\cO_D)),\qquad D\longmapsto\cO_D
\]
gives the embedding of a connected component, which is not symplectic. 
See~\cite{KRZ:NonSymplectic} for another example of a smooth moduli space which is not symplectic.
\end{exam}

\subsubsection*{The symplectic form}
As we explained before, for projectively hyperholomorphic bundles and line bundles supported on Lagrangian subvariety the $\Ext^1(E,E)$ has a natural symplectic form, which can be nicely described in terms of the obstruction map.
Namely, fix $\eta \in HH^2(X)$ and consider the pairing
\begin{equation}\label{eq:SymplecticFormGeneral}
\Ext^1(E,E)\times\Ext^1(E,E)\longrightarrow\C, \qquad (\alpha,\beta)\longmapsto\mathrm{Tr}\left(\mathrm{obs}_E(\eta)^{n-1} \circ \alpha \circ \beta\right).
\end{equation} 
One can show that it is skew-symmetric because the trace kills the graded brackets.

\begin{exam}[Lagrangian subvarieties]\label{ex:SympFormLagrangians}
In the case of a line bundle on a Lagrangian subvariety $Z \subset X$, treated in Section \ref{sec:Lagrangian}, \eqref{eq:SymplecticFormGeneral} is the symplectic form of Theorem \ref{thm:DM}. 
Because of the issues mentioned in Remark \ref{rem:Multiplicativity}, the relation to \eqref{eq:SympFormNaive} is not straightforward.
However, it is easy to check that the hard Lefschetz theorem implies that it is nondegenerate. 
\end{exam}

Obviously, it is only interesting if $\mathrm{obs}_E(\eta)\neq0$.
If we aim to obtain moduli spaces of sheaves that are hyper-K\"ahler themselves, \eqref{eq:SymplecticFormGeneral} suggests that we should only consider moduli spaces of $1$-obstructed objects.
By Conjecture~\ref{Con:Atomic1Obs} on the simple locus $1$-obstructed is equivalent to atomic, which is a cohomological condition depending only on the Mukai vector.
To have a symplectic moduli space one should prove the following conjecture, which was put forward in \cite[Section 8]{Beckmann:Atomic}.

\begin{conj}[Beckmann]
Let $E$ be a stable atomic sheaf, then any nonzero element in the image of the obstruction map $\mathrm{obs}_E(\eta) \in \Ext^2(E,E)$ has the hard Lefschetz property for the Yoneda product on $\Ext^{\bullet}(E,E)$.
\end{conj}

\subsubsection*{A modular construction of OG10}%\label{sec:OG10}

Despite the general difficulties for higher-dimensional moduli spaces, there is one spectacular case where everything works perfectly: line bundles on the Lagrangian surface $F(W_H) \subset F(W)$ of lines contained in a hyperplane section of a general cubic fourfold $W \subset \mathbb{P}^5$
In fact, if $W_H \coloneqq W \cap H$ is a smooth hyperplane section, any line bundle $L \in \Pic^0(F(W_H))$ defines a smooth point of the moduli space $M_{(F(W),h)}(\mathbf{v})$, where $\mathbf{v} \coloneqq v(\cO_{F(W_H)})$.
The closure of this locus identifies a connected component $M^{\circ}_{(F(W),h)}(\mathbf{v})$.
The following is the main result of {\cite{Bot:OG10}}:

\begin{theo}\label{thm:Bottini}
    Let $W \subset \P^5$ be a general cubic fourfold. 
    Let $h$ be the Pl\"ucker polarization on $F(W)$, and $\mathbf{v} \coloneqq v(\cO_{F(W_H)})$. 
    Denote by $\cF \subset F(W_H) \times |\cO_W(1)|$ the universal surface of lines contained in hyperplane sections. 
    Then, 
    \begin{enumerate}[{\rm (a)}]
        \item The component $M^{\circ}_{(F(W),h)}(\mathbf{v})$ is isomorphic to $\overline{\Pic^0}(\cF/|\cO_W(1)|)$, the relative compactified Picard scheme\footnote{This is the closure of the locally free locus in the relative moduli space of torsion-free sheaves of rank $1$ supported on the fibers of $\cF \to |\cO_W(1)|$.} of $\cF \to |\cO_W(1)|$, in the sense of Altman--Kleiman.
        \item The component $M^{\circ}_{(F(W),h)}(\mathbf{v})$ is a smooth hyper-K\"ahler variety isomorphic to the LSV system $\cJ_W$. The symplectic form is given by \eqref{eq:SymplecticFormGeneral}.
    \end{enumerate}
\end{theo}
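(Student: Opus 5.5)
Part (a) comes first. Every point of $M^{\circ}_{(F(W),h)}(\mathbf{v})$ should be a sheaf of the form $i_*L$, with $i\colon F(W_H)\hookrightarrow F(W)$ the surface of lines in some hyperplane section $W_H$ (possibly singular) and $L$ a rank one torsion-free sheaf on $F(W_H)$. Concretely, I would take a stable $E$ in the closure of the locus $\{i_*L : L\in\Pic^0(F(W_H)),\ W_H \text{ smooth}\}$ and study its Fitting support. Since $\mathbf{v}$ has rank $0$ and $c_2$-class $[F(W_H)]$, and $W$ is general (so $\NS(F(W))=\Z h$ and the classes in $H^4$ are controlled), the support is a Lagrangian surface in the class of $F(W_H)$. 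By flatness and properness it is a limit of surfaces $F(W_{H_t})$. Since the map $|\cO_W(1)|\to\mathrm{Hilb}$ sending $H\mapsto F(W_H)$ is closed and its image is a component of the Hilbert scheme (using Voisin's smoothness result for $\mathrm{Def}(X,Z)$ quoted earlier), the support is again some $F(W_{H})$. One then checks that $E$ is scheme-theoretically supported there with rank one, which gives a morphism $M^{\circ}\to\overline{\Pic^0}(\cF/|\cO_W(1)|)$ and its inverse $L\mapsto i_*L$. Both spaces are proper, so it suffices to check bijectivity and that tangent spaces agree. The delicate cases are the singular surfaces $F(W_H)$, where $W_H$ has nodes or worse; there I would use the known local structure of these surfaces (integrality for general $W$) and the Altman--Kleiman theory.

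For part (b), I would show that $M^{\circ}$ is smooth of dimension $10$ by verifying that every point is unobstructed. At a point $E=i_*L$ with smooth support this follows as in Remark~\ref{rem:Multiplicativity}: degeneration gives $\ext^1=b_1(F(W_H))+5=10$, and $F(W_H)$ is atomic (Example~\ref{ex:AtomicLagrangians}), so the obstruction map has rank one. Combined with $T^1$-lifting, this gives smoothness. To obtain a global symplectic form I would use \eqref{eq:SymplecticFormGeneral} with $\eta$ chosen so that $\mathrm{obs}_E(\eta)$ is the class $\overline{\sigma}$ acting on $\Ext^*(E,E)$. Skew-symmetry is automatic. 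Nondegeneracy at smooth-support points follows from hard Lefschetz, as in Example~\ref{ex:SympFormLagrangians}. Nondegeneracy elsewhere follows because the form is closed and globally defined on a smooth proper space, so its degeneracy locus is a divisor, and that divisor must be empty because it misses the complement of a codimension $\geq2$ set.

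To get the identification with $\cJ_W$, I would observe that over the open set $U\subset|\cO_W(1)|$ of smooth hyperplane sections, the Abel--Jacobi map identifies $\Pic^0(F(W_H))$ with the intermediate Jacobian $J(W_H)$: the Fano surface of a cubic threefold has Albanese, and dually $\Pic^0$, equal to $J$. Under this identification the Donagi--Markman form matches the LSV form, as noted after Theorem~\ref{thm:DM}. Hence $M^{\circ}$ and $\cJ_W$ are two smooth projective symplectic compactifications of the same Lagrangian fibration over $U$, compatible with the support maps to $|\cO_W(1)|=\P^5$. By the uniqueness part implicit in Theorem~\ref{thm:Sacca}, namely that birational $K$-trivial minimal models which agree over a big open set and are relatively minimal are isomorphic over the base when the relative Picard rank is one (using $b_2$ and the generality of $W$), we get $M^{\circ}\cong\cJ_W$. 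In particular $M^{\circ}$ is hyper-K\"ahler of OG10-type.

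The main obstacle is smoothness at points supported on singular $F(W_H)$, including non-locally-free sheaves on non-normal supports, because the local-to-global degeneration argument fails there. My first attempt would be to exploit the 1-obstructedness conjecturally inherent to atomic objects. Concretely, I would prove directly, via a deformation to the case where $F(W)$ acquires a Lagrangian fibration and a Fourier--Mukai transform with the Poincar\'e sheaf, that $\ext^1(E,E)=10$ for all $E$. If that fails, I would prove smoothness only over codimension-one strata and then invoke the Sacc\`a extension together with properness to conclude.
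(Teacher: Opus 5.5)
Your outline for (a) is reasonable. One step needs more than you give it: comparing tangent spaces at non-locally-free sheaves on the non-normal surfaces $F(W_H)$ is not automatic, since $\mathcal{E}nd(L)$ can be larger than $\cO_{F(W_H)}$. That comparison needs real control of these sheaves (the paper shows they are Cohen--Macaulay on their supports).

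The genuine gap is in (b). Smoothness of $M^\circ$ at sheaves supported on singular $F(W_H)$ is the whole difficulty of the theorem, and you give no argument for it. Your first attempt names the right tool: specialize to a cubic with $F(W)\to\P^2$ and use the twisted Arinkin--Markman equivalence $\Db(F(W))\simeq\Db(M,\theta)$. But you give no mechanism by which this yields $\ext^1(E,E)=10$, since an equivalence only transports Ext groups. What the transform buys in the paper is this: the sheaves, being Cohen--Macaulay on their supports, go to locally free twisted sheaves. These are slope stable by O'Grady's theory of modular sheaves, hence projectively hyperholomorphic. Even after that, the paper says the smoothness proof is involved and relies heavily on the Laza--Sacc\`a--Voisin computations. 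A direct proof would follow from Beckmann's conjectural skew-symmetry of the Yoneda product on $\Ext^1$ combined with formality. You would also still have to return from the special cubic to a general one. Your fallback does not work. Theorem~\ref{thm:Sacca} only produces \emph{some} $\Q$-factorial terminal symplectic compactification of $M_V$. It has no uniqueness clause and says nothing about the specific space $M^\circ$, and a proper variety that is smooth in codimension one need not be smooth.

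The nondegeneracy step also fails as written. Let $f\colon M^\circ\to|\cO_W(1)|$ be the support map and $U$ the complement of the discriminant hypersurface $\Delta$. You know nondegeneracy only on $f^{-1}(U)$, and its complement $f^{-1}(\Delta)$ is a \emph{divisor}, not a set of codimension $\geq 2$. So $\mathrm{div}(\sigma^5)\in|K_{M^\circ}|$ could be a nonzero effective divisor supported on $f^{-1}(\Delta)$, and nothing in your argument rules this out. You need nondegeneracy at the generic point of $f^{-1}(\Delta)$, that is, for sheaves on Fano surfaces of one-nodal cubic threefolds. These surfaces are non-normal, with $A_1$ singularities along a genus $4$ curve. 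That requires exactly the missing LSV-type local input, or a pointwise hard Lefschetz property as in Theorem~\ref{thm:VerHardLefschetz}, which is available only once the sheaves correspond to projectively hyperholomorphic bundles.

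The identification with $\cJ_W$ has the same problem. It cannot come from a uniqueness part of Theorem~\ref{thm:Sacca}, and $M^\circ$ and $\cJ_W$ agree a priori only over $U$, not over a big open set. Once $M^\circ$ is known to be smooth with trivial canonical bundle, you could argue that the birational map over $\P^5$ is an isomorphism in codimension one and then use relative Picard rank one (e.g.\ when $\rho(\cJ_W)=2$). But that rests on the unproved points above.

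Two minor corrections:
\begin{itemize}
\item At smooth-support points, $\ext^1=h^1(\cO_Z)+h^0(\Omega^1_Z)=5+5=b_1(Z)=10$, not $b_1+5$. Smoothness there is just the dimension count on the $10$-dimensional family $\Pic^0(\cF_U/U)$.
\item Atomicity, i.e.\ rank one of the obstruction map on $\HH^2(X)$, concerns deforming $E$ together with $X$. It does not give unobstructedness of $E$ on a fixed $X$.
\end{itemize}
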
 

The difficulty lies, of course, in the singular hyperplane sections $W_H$. 
In this case, the surface $F(W_H)$ is a local complete intersection surface which is not normal, and whose singularities can be understood to some extent.
For instance, if $W_H$ has a single ordinary double point, the surface $F(W_H)$ has $A_1$ singularities along a smooth curve of genus $4$, which parametrizes the lines containing the singular point. 
In general, the singularities are more complicated, but one can still show that any sheaf in $\overline{\Pic^0}(\cF/|\cO_W(1)|)$ is Cohen--Macaulay on its support.

If one chooses the cubic $W$ so that $F(W)$ has a Lagrangian fibration $F(W) \to \P^2$, then by a result of Markman \cite[Theorem 7.11]{Markman:Lagrangians} combined with the twisted version\footnote{This construction was recently further generalized in \cite{HSh:TwistedArinkin} to allow for twists on the source as well.
} \cite[Theorem 3.3]{Bot:OG10} of Arinkin's result \cite[Theorem C]{Arinkin}, there exists a derived equivalence
\[
    \Phi_{\cP}\colon \Db(F(W)) \xlongrightarrow{\cong}\Db(M,\theta),\quad\text{ for some } \theta \in \Br(M).
\]
Here $M$ is a Beauville--Mukai system on a K3 surface of degree $2$.
The equivalence $\Phi_\mathcal{P}$ acts on general fibers like a classical Fourier–Mukai duality, mapping skyscraper sheaves to line bundles on the corresponding fibers, and mapping the Cohen–Macaulay sheaves in $M_{(F(W),h)}^\circ(v)$ to locally free (twisted) bundles on $M$. These transformed bundles are slope-stable thanks to O'Grady's stability theory for modular sheaves \cite{OG:Modular}. 
Consequently, $M_{(F(W),h)}^\circ(v)$ is identified precisely with a moduli space parameterizing solely stable, projectively hyperholomorphic twisted bundles.

This observation turns out to be crucial for the smoothness of $M^{\circ}_{(F(W),h)}(\mathbf{v})$, whose proof is quite involved and relies heavily on the results and computations of \cite{LSV:OG10}. 
The proof will be drastically simplified if one could establish the following difficult conjecture (\cite[Conjecture C]{Beckmann:Atomic}), which combined with the formality result of \cite{MO:Formality}, would imply smoothness of $M^{\circ}_{(F(W),h)}(\mathbf{v})$ directly. 

\begin{conj}[Beckmann]
Let $E$ be a stable atomic vector bundle on a hyper-K\"ahler manifold. Then the Yoneda product
\[
\Ext^1(E,E) \times \Ext^1(E,E) \longrightarrow \Ext^2(E,E)
\]
is skew-symmetric.
\end{conj}

To successfully run the argument above we need to extend the definition of atomic objects to include the twisted ones as well. 
This is partly done in \cite{Bot:OG10}, however a fundamental question is still open. 
Namely: is the LLV algebra invariant under derived equivalences of twisted hyper-K\"ahler varieties?

\begin{ques}
Let $X,Y$ be two hyper-K\"ahler varities, and $\alpha \in \Br(X),\beta \in \Br(Y)$ be two topologically trivial Brauer classes.
Assume that there is a derived equivalence $\Phi \colon \Db(X,\alpha)\xlongrightarrow{\sim}\Db(Y,\beta)$, is there an isomorphism of Lie algebras $\Phi^{\mathfrak{g}} \colon \mathfrak{g}(X)\xlongrightarrow{\sim} \mathfrak{g}(Y)$?
\end{ques}

\subsubsection*{The LLX example}
One might wonder if there is a similar modular interpretation for the $42$-dimensional irreducible holomorphic symplectic variety $M_{\mathrm{LLX}}$ constructed in \cite{LLX:Lagrangian} and reviewed briefly in Section~\ref{subsec:ExamplesLagrangian}.
With the notation of Example~\ref{ex:IM}, we see that (see also \cite{BC:Semirigid} for these computations) up to tensoring by $\cO_W(h)$, we have
\[
\tilde{v}\left(j_*\cO_{F_2(Y)}(h)\right) = 63\,\mathbf{v},
\]
where we keep the above notation that $\mathbf{v} = v(\cO_{F_2(Y)})$. 
In other words, the variety $M_{\mathrm{LLX}}$ is birational to an irreducible component $M^{\dagger}_{(F(W),h)}(63\mathbf{v}) \subset M_{(F(W),h)}(63\mathbf{v})$.

\begin{ques}
Are  $M_{\mathrm{LLX}}$  and  $M^{\dagger}_{(F(W),h)}(63\mathbf{v})$ actually isomorphic? If not, can we give a modular description of the singular points? Are they strictly polystable?
\end{ques}

As a first step to understanding the last question, the upcoming paper \cite{BC:Semirigid} shows that the natural morphism
\[
\Sym^n M^{\circ}_{(F(W),h)}(\mathbf{v})\longrightarrow M_{(F(W),h)}(63\mathbf{v})
\]
defines an irreducible component for every $n$.
In particular, if the component $M^{\dagger}_{(F(W),h)}(63\mathbf{v})$ contains semistable sheaves, either it intersects $\Sym^{63} M^{\circ}_{(F(W),h)}(\mathbf{v})$, or some other interesting phenomenon is occurring.

\begin{rema}
An interesting consequence of this fact is that O'Grady's desingularization strategy, which on K3 and abelian surfaces produces OG10 and OG6, cannot be applied to any of the moduli spaces $M_{(F(W),h)}(k\mathbf{v})$ to obtain new hyper-K\"ahler manifolds.
\end{rema}

%%%%%%%%%%%%%%%%%%

\section{Categories}\label{sec:categories}

In this section we indulge more on one of the fundamental questions which are crucial in this survey: assume we have a locally complete family of hyper-K\"ahler manifolds, can we prove that the (possibly very general) member of such a family is isomorphic to a moduli space of stable objects on a different hyper-K\"ahler (or even K-trivial) manifold?

The emphasis in this section is on the idea that such a question can be answered in the positive even assuming that the `hyper-K\"ahler manifold' over which we take stable objects has smaller dimension up to getting to the surface case. Before digging into a more detailed discussion, we should point out that the use of the quotation marks is mainly due to the fact that the surface we want to deal with cannot be a classical commutative one. We know that it is not true that the very general member of a locally complete family of hyper-K\"ahler manifolds of $\mathrm{K}3^{[n]}$-type is isomorphic to a moduli space of stable sheaves on a K3 surface. Indeed, we will soon be dealing with surface-like triangulated categories which will play the role of the noncommutative surfaces we are looking for. This is our starting point.
Back to the main theme of the previous sections, the categorical analogue of Lagrangian submanifolds in such noncommutative setting should be a spherical functor thought as a noncommutative curve.
We will discuss this only in the case of cubic fourfolds.

\subsection{From varieties to (higher) categories}\label{subsect:noncomset}

In order to be able to pass from a commutative variety to a noncommutative one, we should first replace a smooth projective variety $X$ with its bounded derived category of coherent sheaves $\Db(X)$ (or a close relative of it, as we will see). But in order to get some geometric content out of $\Db(X)$ we generally need to endow such a triangulated category with some additional structures. Here we discuss the first one.
A recent overview on the theory with a similar viewpoint is~\cite{P:Survey}.

All schemes in this section are assumed quasi-compact and separated.
We fix a quasi-compact separated scheme $S$, which will be our base scheme.

\subsubsection*{Enhancements}
To make the presentation more intuitive in the beginning of this section, we assume that
$S=\Spec(R)$, where $R$ is a (commutative) ring. 
Then, for an $R$-scheme $\pi\colon X\to S$, the category of \emph{perfect complexes} $\Dperf(X)$ is a triangulated category, linear over $R$.
The first step in this section is to get a grasp on what it means to be linear over $R$ at a categorical level.
To this end, the first observation is that this category admits an \emph{$\infty$-categorical enhancement}.
This means that there exists a stable $\infty$-category $\cD$ whose homotopy category $\Ho\cD$ is equivalent to the given triangulated category. 
We will not need the full extent of the theory, for which we refer to \cite{HTT,HA,SAG} for an extensive presentation, and to~\cite{NCHPD} for a concise overview.
The key ideas to keep in mind are: the homotopy category of a stable $\infty$-category is triangulated and thus it makes sense to look at $\infty$-functors between stable $\infty$-categories whose induced functor at the level of homotopy categories is exact.
Moreover there is an operation of taking a stable $\infty$-category $\cD$ to its closure $\mathrm{Ind}(\cD)$ under small colimits.
This has an inverse operation, consisting in taking the category of \emph{compact objects}, meaning the compact objects in the corresponding homotopy category.\footnote{Recall that an object $C$ in a triangulated category $\cT$ with coproducts is called \emph{compact} if the natural morphism $\bigoplus_{i\in I}\Hom(C,C_i)\to\Hom(C,\bigoplus_{i\in I}C_i)$ is an isomorphism for all $\{C_i\}_{i\in I}\subset\cT$.}

In the case of $\Dperf(X)$, we denote its enhancement by $\Dperfinf(X)$: if $X=\Spec(R)$, such enhancement is naturally associated to thinking to the category of perfect objects as $K^b(R\text{-}\mathrm{proj})$, the homotopy category of bounded complexes of finitely present projective $R$-modules. 
The associated Ind-category is then the stable $\infty$-category $\Dqcinf(X)$, whose homotopy category is the (unbounded) derived category of quasi-coherent sheaves $\Dqc(X)$.
We can then reinterpret the $R$-linearity as follows.
In the case of $\Dperf(X)$, the tensorization by objects in $\Dperfinf(S)$ makes $\Dperfinf(X)$ into a $\Dperfinf(S)$-module in the category of stable $\infty$-categories and this means that $\Dperfinf(X)$ is an \emph{$S$-linear stable $\infty$-category}.
Similarly, in the case of $\Dqc(X)$, we can tensor by objects in $\Dqcinf(S)$, and obtain a \emph{presentable $S$-linear stable $\infty$-category}.
A more down-to-earth understanding about such an $S$-linearization is provided by the fact that any pair of objects $A,B\in\Dqcinf(X)$ come with a mapping object
\[
\mathcal{H}om_S(A,B)\coloneqq\pi_*\mathcal{H}om_X(A,B)\in\Dqc(S),
\]
where $\mathcal{H}om_X(A,B)$ is the derived Hom-sheaf on $X$.

\begin{rema}
In this paper we adopt the language of $\infty$-categories to talk about enhancements of triangulated categories. 
Due to the works \cite{COS1,COS2,Cohn13,Doni}, in the setting we are currently considering, such an approach is equivalent to the one using dg or $A_\infty$ categories. 
Furthermore, the presentation above seems to depend on the choice of special enhancements for the triangulated categories, but this is not the case, in view of \cite[Theorem B]{CNS1} (and its strengthening in \cite{CNS2}).
\end{rema}

What we have just discussed in the very special case of an enhancement for $\Dqc(X)$ can be rephrased for any stable $\infty$-category $\cD$. 
In particular, such a category $\cD$ is $S$-linear if it is a $\Dperfinf(S)$-module. Similarly, in this case, for any pair of objects $A,B\in\cD$, the mapping object $\mathcal{H}om_S(A,B)$ is a complex in $\Dqcinf(S)$. 
Furthermore, given two such stable $\infty$-categories $\cD_1$ and $\cD_2$, we can consider the category
\[
\mathrm{Fun}_{\Dqcinf(S)}\big(\mathrm{Ind}(\cD_1),\mathrm{Ind}(\cD_2)\big),
\]
consisting of exact functors between $\mathrm{Ind}(\cD_1)$ and $\mathrm{Ind}(\cD_2)$ preserving small colimits and the $S$-linear structure. 
Such a category is automatically stable (and presentable $S$-linear) and thus we can look at its compact objects.

The following is crucial for us.

\begin{defi}
Let $\cD$ be an $S$-linear stable $\infty$-category. We say that $\cD$ is:
\begin{enumerate}[{\rm (i)}]
\item {\sf proper} if, for any pair objects $A$ and $B$ in $\cD$, we have $\mathcal{H}om_S(A,B)\in\Dperf(S)$;
\item {\sf smooth} if the identity functor of $\cD$ is a compact object in $\mathrm{Fun}_{\Dqcinf(S)}\big(\mathrm{Ind}(\cD),\mathrm{Ind}(\cD)\big)$.
\end{enumerate}
\end{defi}

The previous definition is motivated by the following.

\begin{exam}
It was observed in \cite[Lemma 4.9]{NCHPD} that if $X$ is a smooth and proper scheme over $S$, then $\Dperfinf(X)$ is smooth and proper. Furthermore, if $X$ and $S$ are noetherian, $\pi\colon X\to S$ is a flat, separated, and of finite type, and $\Dperfinf(X)$ is smooth and proper, then $X$ is smooth and proper over $S$. It should be noted that the proof in \cite{NCHPD} uses a result in \cite{Toen} (later extended in \cite{BZFN}) whose proof (as well as the proof in \cite{BZFN}) contains a gap which is going to be fixed in \cite{COS3}.
Finally, we observe that if $S$ is itself a regular scheme and $X$ is smooth over $S$, then of course $\Dperf(X)=\Db(X)$.
\end{exam}

\subsubsection*{Calabi--Yau categories}

Now we want to introduce some special classes of $S$-linear $\infty$-categories. 
To this extent, recall that for a stable $S$-linear $\infty$-category $\cD$, with identity functor $\id_\cD\in\mathrm{Fun}_{\Dqcinf(S)}\big(\mathrm{Ind}(\cD),\mathrm{Ind}(\cD)\big)$, then
\[
\HH^\bullet(\cD/S)\coloneqq\mathcal{H}om_S(\id_\cD,\id_\cD)\in\Dqc(S)
\]
is the {\sf Hochschild complex} of $\cD$ (over $S$). Consequently, for an integer $i$, the cohomology sheaf
\[
\HH^i(\cD/S):=H^i(\HH^\bullet(\cD/S))
\]
is the {\sf $i$-th Hochschild cohomology} of $\cD$ (over $S$).

\begin{exam}
If $\cD=\Dperfinf(X)$ for $X$ a smooth projective (integral) scheme over $\C$, then we can reconsider the discussion in Section \ref{subsection:defo} so that the Hochschild--Konstant--Rosenberg isomorphism provides the isomorphism 
\[
\HH^i(X):=\HH^i(\Dperfinf(X)/\C)\cong\bigoplus_{p+q=i}H^p\big(X,\bigwedge^q T_X\big).
\]
In particular, $\HH^0(X)\cong\C$ and $\HH^{<0}(X)=0$.
\end{exam}

We say that a stable $S$-linear $\infty$-category has a {\sf Serre functor} if there is an equivalence $S_\cD\colon\cD\xrightarrow{\cong}\cD$ such that the exact functor $S_{\Ho\cD}:=\Ho S_\cD\colon\Ho\cD\to\Ho\cD$ induces natural isomorphisms
\[
\mathcal{H}om_S(A,B)\cong\mathcal{H}om_S(B,S_{\Ho\cD}(A))^\vee,
\]
for any $A,B\in\Ho\cD$.

With these two ingredients in mind we are ready to deal with the following definition.

\begin{defi}
A stable $S$-linear $\infty$-category $\cD$ is an {\sf $n$-Calabi--Yau category} over $S$ (or a CY$n$ category over $S$) if:
\begin{enumerate}[{\rm (i)}]
\item $\cD$ is smooth and proper over $S$,
\item $\cD$ is \emph{connected} over $S$, i.e., $\HH^0(\cD_s/\Spec(k))\cong k$ and $\HH^{< 0}(\cD_s/\Spec(k))=0$,
\item $\cD$ has a Serre functor $S_\cD$ and $S_{\Ho\cD_s}\cong[n]$,
\end{enumerate}
for every point $s=\Spec(k)\hookrightarrow S$.
\end{defi}

We should clarify that $\cD_s$ stands for the base-change of $\cD$ to the point $s$ as defined, for example, in \cite[Section 2.3.1]{NCHPD}. 
In the geometric case, this reduces to the usual Calabi--Yau condition.

\begin{exam}\label{ex:CYeasy}
If $X$ is a smooth projective Calabi--Yau variety of dimension $n$ over a field $k$, then $\Dperfinf(X)$ is a CY$n$ category over $k$. 
Similarly, if $\cX\to S$ is a family of smooth projective Calabi--Yau varieties of dimension $n$, then $\Dperfinf(\cX)$ is a CY$n$ category over $S$.
\end{exam}

\subsection{Examples}

The aim of this section is to provide new examples of Calabi--Yau categories which generalize the geometric one described in Example \ref{ex:CYeasy}. We will mainly follow two paths: the first one consists in recovering a CY$n$ category as an admissible subcategory of a geometric one and the second one boils down to a purely noncommutative deformation of a geometric category. The focus is on the case of case $n=2$ as we want to generalize the example of hyper-K\"ahler manifolds obtained as moduli spaces of stable sheaves in K3 surfaces.

\subsubsection*{Semiorthogonal decompositions and CY$2$ categories}

We begin by recalling that if $\cD$ is a triangulated category, then a {\sf semiorthogonal decomposition}
\begin{equation*}
\cT = \langle \cT_1, \dots, \cT_m \rangle
\end{equation*}
is a sequence of full triangulated subcategories $\cT_1, \dots, \cT_m$ of $\cT$ 
such that: 
\begin{enumerate}[{\rm (i)}]
\item
$\Hom(F, G) = 0$ for all $F \in \cT_i$, $G \in \cT_j$ and $i>j$.
\item
For any $F \in \cT$, there is a sequence of morphisms
\begin{equation*}
0 = F_m \to F_{m-1} \to \cdots \to F_1 \to F_0 = F,
\end{equation*}
such that $\mathrm{cone}(F_i \to F_{i-1}) \in \cT_i$ for $1 \leq i \leq m$. 
\end{enumerate}
In this paper we assume further that the subcategories $\cT_i$ are \emph{admissible}, i.e., their inclusions in $\cT$ have both right and left adjoint.

In the spirit of the discussion of the previous section and for later use, let us point out that the previous definition can be restated for a stable $S$-linear $\infty$-category $\cD$.
In that case, indeed, we require that the triangulated category $\Ho\cD$ admits a semiorthogonal decomposition as above, where the components are of the form $\cT_i=\Ho\cD_i$, where $\cD_i$ are stable $S$-linear $\infty$ subcategories of $\cD$.

\begin{exam}[Cubic fourfolds]\label{ex:cubics}
Smooth cubic fourfolds are parametrized by a $20$-dimensional quasi-projective moduli space. 
Let $\pi\colon\cW\to S$ be a family of cubic fourfolds over a quasi-projective connected complex variety $S$. 
If we denote by $\cO_{\cW}(1)$ the very ample line bundle, we get a semiorthogonal decomposition
\[
\Dperf(\cW)=\left\langle\mathcal{K}u(\cW),\pi^*\Dperf(S),\cO_{\cW}(1)\otimes\pi^*\Dperf(S),\cO_{\cW}(2)\otimes\pi^*\Dperf(S)\right\rangle.
\]
Since, by the discussion in the previous section, $\Db(\cW)$ has an $S$-linear enhancement, the above semiorthogonal decomposition is $S$-linear in the sense discussed above. 
If $S=\Spec(\C)$, then we get a semiorthogonal decomposition $\Db(W)=\left\langle\mathcal{K}u(W),\cO_W,\cO_W(1),\cO_W(2)\right\rangle$, where the triangulated subcategories of $\Db(W)$ generated by $\cO_W(k)$ under extensions, shifts, finite coproducts and direct summands is equivalent to $\Db(\mathrm{pt})$, for $k=0,1,2$.
This explains why $\mathcal{K}u(\cW)$ is indeed the relevant component in the semiorthogonal decomposition. 
We refer to it as the {\sf Kuznetsov component} of the decomposition.

Given any point $s=\Spec(\C)\hookrightarrow S$, one can show that the base change of the Kuznetsov component $\mathcal{K}u(\cW_s)$ is a CY$2$ category (\cite{Kuz:Serre}; see also~\cite[Section 3]{MS:Lectures}). Thus the $S$-linear category $\mathcal{K}u(\cW)$ (actually, its enhancement) provides a $20$-dimensional family of CY$2$ categories, for a suitable choice of $S$. 
Due to the works \cite{AT,BLMNPS:Family} the cubic fourfolds $W$ for which $\mathcal{K}u(W)$ is indeed equivalent to an actual K3 surface are completely classified. Indeed, they form a countable union of divisors in the moduli space of cubic fourfolds.
\end{exam}

\begin{exam}[Gushel--Mukai fourfolds]\label{ex:GM}
The case of Gushel--Mukai fourfolds is similar.
In this case, we have a complete intersection
\[
W\coloneqq\mathrm{Cone}(\mathrm{Gr}(2, 5))\cap Q,
\]
where $\mathrm{Cone}(\mathrm{Gr}(2, 5))\subset\P^{10}$ is the projective cone over the Pl\"ucker embedded Grassmannian $\mathrm{Gr}(2, 5)\subset\P^9$ and $Q\subset\P^{10}$ is a quadric hypersurface in a linear subspace $\P^8\subset\P^{10}$.

To such a $W$ we can associate its Kuznetsov component $\mathcal{K}u(W)\subset\Db(W)$ as in Example~\ref{ex:cubics}. 
Indeed, one consider $\cU_W,\cO_W(1)\in\Db(W)$ to be the pull-backs of the rank-$2$ tautological subbundle and the Pl\"ucker line bundle on $\mathrm{Gr}(2, 5)$. 
With this in mind we get a semiorthogonal decomposition (see~\cite{KP:GM})
\[
\Db(W)=\left\langle\mathcal{K}u(W),\cO_W,\cU_W^\vee,\cO_W(1),\cU_W^\vee(1)\right\rangle.
\]
Similarly to the case of cubic fourfolds, if $\pi\colon\cW\to S$ is a family over a quasi-projective and connected complex variety, then one get an $S$-linear semiorthogonal decomposition of $\Dperf(\cW)$. 
The corresponding Kuznetsov component $\mathcal{K}u(\cW)$ (in fact, as before, its enhancement) is an $S$-linear CY$2$ category. 
Again, the reader may have a look at \cite[Section 3]{MS:Lectures} for a survey on this matter. 
Also in this example, it is actually clear that the very general point in the moduli space of Gushel--Mukai fourfolds is such that the corresponding Kuznetsov component is not geometric, in the sense that it is not equivalent to the derived category of an actual K3 surface.
\end{exam}

As we explained in the previous two examples, the Kuznetsov components arising in the semiorthogonal decompositions of cubic and Gushel--Mukai fourfolds are K3-like noncommutative deformations of actual K3 surfaces. This will be used later to give modular interpretations of the generic members of some families of hyper-K\"ahler manifolds.
Lagrangian submanifolds have categorical counterparts as well.
In fact, by taking smooth hyperplane sections, we can also get corresponding semiorthogonal decompositions of their derived categories and Kuznetsov components.
Notably such Kuznetsov components have homological properties similar to those of a derived category of a curve, and we we will think to them as noncommutative curves in the corresponding noncommutative K3 surface.
To be more precise, in the case of cubic fourfolds (the case of Gushel--Mukai is analogous), we have the following picture.
Let $W$ be a cubic fourfold and let $W_H\hookrightarrow W$ be a smooth cubic threefold hyperplane section of $W$.
We get a semiorthogonal decomposition
\[
\Db(W_H)=\langle\mathcal{K}u(W_H),\cO_{W_H},\cO_{W_H}(1)\rangle,
\]
and the push-forward along the closed embedding induces a functor
\begin{equation}\label{eq:spherical}
\Phi_H\colon \mathcal{K}u(W_H) \longrightarrow\mathcal{K}u(W).    
\end{equation}
The key point is that this functor is \emph{spherical}~\cite{Kuz:CY} with some special properties regarding its associated spherical twists, which makes it the correct analogue of a noncommutative curve.
Varieties of lines can then be interpreted as moduli spaces of stable objects and $\Phi_H$ will indeed induce the inclusion between them (as in Example~\ref{ex:AtomicLagrangians}).

\subsubsection*{Deformations}
We are now ready to discuss yet another way to construct CY$2$ categories over a base. The key difference with respect to Examples~\ref{ex:cubics} and~\ref{ex:GM} is that, while there the construction is purely triangulated here, the higher categorical viewpoint is the key.

\begin{exam}[K3 deformations]\label{ex:K3defo}
In a current work in progress \cite{MPS}, given a family $\cM\to S$ of polarized hyper-K\"ahler manifolds of K3$^{[n]}$-type over a quasi-projective base $S$ such that there is a closed point $s_0\in S$ for which $\cM_{s_0}\cong X^{[n]}$, $X$ a K3 surface, one can construct a relative CY2 category $\cD$ over $S$ with the property that $\Ho\cD_{s_0}\cong\Db(X)$. 
The idea in the construction is to exhibit a higher categorical analogue of the comonad construction in \cite{MM}. 
Such an higher categorical point of view solves the open problem in the latter paper consisting in showing that the comonad construction yields an actual triangulated category. 
While $\cD$ is automatically an $S$-linear proper stable $\infty$-category, its smoothness requires more work.
\end{exam}

\begin{exam}[Abelian surface deformations]\label{ex:abeliandefo}
From the geometric point of view, we know from Section \ref{subsec:Examples} that, given an abelian surface $A$, the crepant resolution of the quotient of $A$ by the group $G:=\langle-1\rangle\cong\Z/2\Z$ provides a K3 surface $X=K_1(A)$. From the homological point of view this is reflected by the fact that $\Db(X)\cong\mathrm{D}^b_G(A)$, where the latter category is the derived category of $G$-equivariant coherent sheaves (see \cite{Inducing} for a quick review). Such an equivalence is a simple instance of the main result in \cite{BKR}. The key observation is that, by \cite{Elagin}, there is a dual $\Z/2\Z$-action on $\Db(X)$ such that its equivariant derived category is precisely $\Db(A)$. Note that making this precise requires enhancing the triangulated categories involved as well as the group actions on them.

The key beautiful idea in \cite{BPPZ} is that this can be make work in families. Let us start with a family $\cX\to S$ of polarized K3 surfaces, where the polarization is with respect to a carefully chosen lattice, such that there is a close point $s_0\in S$ with the property that $X_{s_0}$ carries the $\Z/2\Z$-dual action mentioned above. In \emph{loc.~cit.}, the authors show that, once we enhance the family and the action at the $\infty$-categorical level, such an action deforms all over the family. Thus if one passes to the equivariant category, one gets a CY$2$ category $\cD$ over $S$ such that $\Ho\cD_{s_0}$ is the derived category of an actual abelian surface.
\end{exam}

\subsection{Bridgeland stability conditions}

In this section we want to reconsider Examples \ref{ex:cubics}, \ref{ex:GM}, \ref{ex:K3defo}, and \ref{ex:abeliandefo} and prove that all CY2 categories constructed there carry stability conditions in the sense discussed in Section \ref{subsec:Examples}.
The output will be a solution to the problem we mentioned before: show that the (very) general member of a locally complete family of hyper-K\"ahler manifolds is isomorphic to a moduli space of stable object on a suitable CY2 category.
Each example requires a different approach that we will discuss briefly.

\subsubsection*{Stability conditions on Kuznetsov components}
The key result we want to discuss is actually the following (see~\cite{BLMS,PPZ}):

\begin{theo}\label{thm:stubKuComp}
Let $W$ be either a cubic fourfold or a Gushel--Mukai fourfold. 
Then $\mathcal{K}u(W)$ has a stability condition.
\end{theo}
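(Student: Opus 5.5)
The plan is the one of Bayer--Lahoz--Macr\`i--Stellari (and Perry--Pertusi--Zhao in the Gushel--Mukai case). We induce a stability condition on $\mathcal{K}u(W)$ from a weak stability condition on an auxiliary category, via a conic or quadric fibration. This replaces working on $\Db(W)$ directly, which is a fourfold where tilt stability is not enough.

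First, for a cubic fourfold, choose a line $\ell\subset W$ and blow it up. Projecting from $\ell$ gives a conic fibration $\widetilde{W}\to\P^3$. By Kuznetsov's theory of quadric fibrations, $\mathcal{K}u(W)$ embeds as an admissible subcategory of $\Db(\P^3,\mathcal{B}_0)$, where $\mathcal{B}_0$ is the even Clifford algebra sheaf of the conic fibration. The embedding is
\[
\Db(\P^3,\mathcal{B}_0)=\langle \mathcal{K}u(W), \mathcal{B}_1,\mathcal{B}_2,\mathcal{B}_3\rangle ,
\]
obtained by comparing semiorthogonal decompositions of $\Db(\widetilde W)$. For a Gushel--Mukai fourfold, one uses instead the quadric fibration structure coming from the Grassmannian, in the same spirit.

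Second, on the noncommutative threefold $(\P^3,\mathcal{B}_0)$ we construct weak stability conditions. Start from slope stability for $\mathcal{B}_0$-modules, tilt once to get weak tilt stability $\sigma_{\alpha,\beta}$, and then rotate and tilt a second time. This requires the Bogomolov--Gieseker inequality for slope-semistable $\mathcal{B}_0$-modules. We prove it by pushing forward to $\P^3$ and using the classical inequality, with a correction controlled by the ranks of $\mathcal{B}_0$.

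Third, and this is the key general step, we prove a criterion of the following kind. Let $\cT=\langle\mathcal A,E_1,\dots,E_m\rangle$ with $\mathcal A$ admissible, and let $\sigma$ be a weak stability condition on $\cT$ with heart $\mathcal{H}$. Suppose each $E_i$ lies in $\mathcal{H}$, each $S(E_i)$ lies in $\mathcal{H}[1]$, and $Z$ has no nonzero kernel objects in $\mathcal{A}\cap\mathcal{H}$. Then $(\mathcal{A}\cap\mathcal{H},Z|_{\mathcal A})$ is a genuine stability condition on $\mathcal A$. Harder--Narasimhan filtrations are inherited by a noetherianity argument, and the support property is obtained by restricting a quadratic form from $\cT$.

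Finally, we check these hypotheses for suitable parameters $(\alpha,\beta)$ after the double tilt: the exceptional objects $\mathcal{B}_i$ and their Serre duals land in the right hearts. The main obstacle will be the last requirement, namely that no object of $\mathcal{K}u(W)$ in the heart has $Z=0$, together with the support property. For this we first try to show that a kernel object would be a $0$-dimensional torsion module; such modules are orthogonal to $\mathcal{K}u(W)$ only if they are zero, which we will argue through the Bogomolov inequality and the vanishing $\Hom(\mathcal{B}_i,-)=0$. If that fails, we will perturb $\beta$ to a generic rational value and argue numerically.
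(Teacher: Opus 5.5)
Your outline is the one the paper cites from BLMS: a line, the conic fibration over $\P^3$, $\Db(\P^3,\mathcal{B}_0)=\langle\mathcal{K}u(W),\mathcal{B}_1,\mathcal{B}_2,\mathcal{B}_3\rangle$, a double tilt, and the inducing criterion of BLMS, Proposition~5.1. However, the step you propose for the Bogomolov inequality would not carry the argument.

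\textbf{Why the Bogomolov step fails as proposed.} The underlying sheaf $\mathrm{Forg}(F)$ of a $\mu$-semistable $\mathcal{B}_0$-module is in general not semistable on $\P^3$; already $\mathrm{Forg}(\mathcal{B}_0)\cong\cO\oplus\cO(-1)\oplus\cO(-2)^{\oplus2}$ is not. So the classical inequality only applies to its Harder--Narasimhan factors. What comes out is a weak bound $H\ch_2(F)/\ch_0(F)\le\mu(F)^2/2+c$, with $c$ governed by the spread of those slopes. The obvious estimate, from the twists $0,-1,-2$ in $\mathcal{B}_0$, only gives a spread of $2$ on each side of $\mu(F)$.

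\textbf{Why a weak bound is not enough.} Your final verification is numerically tight and needs the sharp value $c=\tfrac{11}{32}$. That is, it needs $\Delta_{\mathcal{B}_0}\ge0$ for the modified character $\ch_{\mathcal{B}_0}=\ch(\mathrm{Forg}(-))\,(1-\tfrac{11}{32}L)$, which is normalized exactly so that $\Delta_{\mathcal{B}_0}(\mathcal{B}_j)=0$. Indeed, $\mu(\mathcal{B}_j)=-\tfrac54+\tfrac j2$ and $S(\mathcal{B}_j)=\mathcal{B}_{j-3}[3]$. So your hypotheses require both $\mathcal{B}_1$ and $\mathcal{B}_0[2]$ to lie in the double-tilted heart. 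This forces $-\tfrac54<\beta<-\tfrac34$. In the sharp normalization it also forces $\alpha<-\tfrac34-\beta$ and $\alpha\le\beta+\tfrac54$, hence $\alpha<\tfrac14$. A tilt built from a weak inequality with constant $c$ corresponds to sharp parameters with $\alpha^2\ge2(c-\tfrac{11}{32})$. So for this exceptional collection the window is empty as soon as $c\ge\tfrac38$. You need a genuinely sharp Bogomolov inequality for $\mathcal{B}_0$-modules, which is a separate technical result in BLMS, not a corrected version of the classical one.

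\textbf{Smaller points.}
\begin{itemize}
\item The line must be chosen not to lie in a plane contained in $W$. Otherwise $\mathrm{Bl}_\ell W\to\P^3$ has a two-dimensional fibre and is not a flat conic fibration, so Kuznetsov's description via $\mathcal{B}_0$ does not apply. Such lines always exist, as the paper notes.
\item Your claim that the support property comes from restricting a quadratic form from $\cT$ needs an argument. Objects semistable for the induced stability condition on $\mathcal{A}$ need not be semistable for the weak stability condition on $\cT$.
\item The Gushel--Mukai case is only gestured at: no embedding, exceptional collection or Bogomolov-type inequality is specified. The paper defers this case to Perry--Pertusi--Zhao, whose strategy is analogous but more involved.
\end{itemize}
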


The result was proven in \cite{BLMS} for cubic fourfold (and also for Fano threefolds). 
The idea is to realize the Kuznetsov component of $W$ as an admissible subcategory of a noncommutative version of $\P^3$ which is obtained in a very geometric way as follows. One picks a line $\ell\subset W$ which is not contained in a projective plane in $W$ (such a line always exists) and to project from it onto a skew $\P^3$. 
The induced conic fibration $\pi_\ell\colon W\dashrightarrow\P^3$ has an homological counterpart as $\mathcal{K}u(W)$ embeds in the derived category of coherent sheaves over the sheaf of the even parts of the Clifford algebra over $\P^3$. 
The general result~\cite[Proposition 5.1]{BLMS} kicks in and we get the desired result. The case of Gushel--Mukai fourfolds is treated in~\cite{PPZ} with a similar but more complicated strategy.

The next observation is that Theorem \ref{thm:stubKuComp} can be reframed in the context of families of stability condition as in \cite{BLMNPS:Family},
%in order to deal with the CY2 categories over an arbitrary base discussed in Examples \ref{ex:cubics} and \ref{ex:GM}. 
in order to study the nonemptiness and the geometric properties of moduli spaces of stable objects. 
As a consequence, we get the following two results, which provide infinitely many locally complete $20$-dimensional new families of hyper-K\"ahler manifolds where every fiber is a moduli space of stable objects in the Kuznetsov component, seen as a `noncommutative K3 surface'.
The cubic fourfolds case is~\cite[Corollary 29.5]{BLMNPS:Family}:

\begin{theo}
For any pair $(a,b)$ of coprime integers, there is a unirational locally complete
$20$-dimensional family, over an open subset of the moduli space of cubic fourfolds, of smooth polarized hyper-K\"ahler manifolds of dimension $2n+2$, where $n=a^2-ab+b^2$. 
The polarization has either degree $6n$ and divisibility $2n$ if $3$ does not divide $n$, or degree and divisibility $\frac{2}{3}n$ otherwise.
\end{theo}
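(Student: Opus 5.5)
The plan is to transport the K3 surface picture of Theorem~\ref{thm:Yoshioka} to the Kuznetsov component, working relative to the family of cubic fourfolds.

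\emph{Step 1: the lattice.} For a cubic fourfold $W$ I consider the Mukai lattice $\widetilde{H}(\mathcal{K}u(W),\Z)$, the orthogonal of the classes of $\cO_W,\cO_W(1),\cO_W(2)$ in topological K-theory, with the weight~2 Hodge structure induced from $H^4(W)$. It is an even lattice isometric to $\widetilde{\Lambda}_\mathrm{K3}$. For \emph{every} cubic it contains a rank~2 primitive algebraic sublattice $A_2=\langle\lambda_1,\lambda_2\rangle$, the projections of $[\cO_{\mathrm{line}}]$-type classes, with $\lambda_i^2=2$ and $(\lambda_1,\lambda_2)=-1$. For coprime $(a,b)$ I set $\bv=a\lambda_1+b\lambda_2$. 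This vector is primitive, and
\[
\bv^2=2a^2-2ab+2b^2=2n,\qquad n=a^2-ab+b^2 .
\]
The expected moduli space therefore has dimension $\bv^2+2=2n+2$.

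\emph{Step 2: stability in families and nonemptiness.} Let $\cW\to S$ be the universal family over an open subset of the moduli space of cubics, after an étale or finite base change that makes it fine. I use Theorem~\ref{thm:stubKuComp} together with the family version of \cite{BLMNPS:Family} to produce a relative stability condition $\underline{\sigma}$ on $\mathcal{K}u(\cW)$ over $S$ with constant central charge on $A_2$, which I can take $\bv$-generic. The relative moduli space $M_{\underline{\sigma}}(\bv)\to S$ is then a proper algebraic space over $S$.

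For nonemptiness and the structure of the fibers, I specialize to a cubic with $\mathcal{K}u(W)\cong\Db(X)$ for a K3 surface $X$. Such cubics form dense divisors, and $\underline{\sigma}_s$ lies, up to the group action, in $\Stab^\dagger(X)$. Over such a point, Theorem~\ref{thm:Yoshioka} says the fiber is a smooth projective hyper-K\"ahler manifold of K3$^{[n+1]}$-type.

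\emph{Step 3: smoothness over all of $S$.} At every point I run the deformation argument of Theorem~\ref{thm:Yoshioka} inside the CY2 category $\mathcal{K}u(W_s)$. The stable objects are simple, $\Ext^2$ is $1$-dimensional by Serre duality, and the trace kills obstructions. Hence $M_{\underline{\sigma}}(\bv)\to S$ is smooth and projective, with fibers of dimension $2n+2$.

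The fibers carry a symplectic form, from the Mukai pairing on $\Ext^1$, and they are deformation equivalent to the special fiber, so they are hyper-K\"ahler of K3$^{[n+1]}$-type. Projectivity comes from the relative positive line bundle $\ell_{\underline\sigma}$. This step, which establishes that the family of stability conditions is well defined over the whole base, is the main obstacle; it is exactly the content of \cite{BLMNPS:Family}, and I would cite it.

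\emph{Step 4: local completeness and the polarization.} The analogue of the Hodge isometry $H^2(M_s,\Z)\cong\bv^\perp$ holds in families. Since $\bv\in A_2$ and $A_2$ is the generic algebraic lattice, $\bv^\perp$ has generic algebraic part of rank~1, namely $\bv^\perp\cap A_2$. Together with the fact that cubics have 20 moduli, the local Torelli theorem and Section~\ref{subsec:Torelli} make the family locally complete.

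The polarization is a generator $h$ of $\bv^\perp\cap A_2$. Taking $w=(a-2b)\lambda_1+(2a-b)\lambda_2$, one checks that $w\perp\bv$ and $w^2=6n$. When $3\nmid n$ this $w$ is primitive, and its divisibility in $\bv^\perp$ is $2n$. When $3\mid n$, $w/3$ is integral, giving degree $\frac{2}{3}n$ and divisibility $\frac{2}{3}n$.

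Unirationality follows because $S$ is an open subset of the moduli space of cubics, which is unirational, and the base change needed in Step 2 can be chosen rational.
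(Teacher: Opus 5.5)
Your proposal is correct and follows essentially the paper's route. The paper obtains this statement from the existence of stability conditions on $\mathcal{K}u(W)$ (Theorem~\ref{thm:stubKuComp}), made relative over the family of cubics as in \cite{BLMNPS:Family}; that is your argument: relative moduli for $\bv=a\lambda_1+b\lambda_2$, deformation to a cubic with $\mathcal{K}u(W)\cong\Db(X)$, the Hodge isometry with $\bv^\perp$, and the $A_2$ lattice computation, which checks out ($w\perp\bv$, $w^2=6n$, and the stated divisibilities). Two points should be stated more carefully: smoothness of $M_{\underline{\sigma}}(\bv)\to S$ holds only over the open locus where $\underline{\sigma}_s$ stays $\bv$-generic (hence the `open subset' in the statement), and relative unobstructedness uses that $\bv\in A_2$ remains algebraic on every cubic.
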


The case of Gushel--Mukai fourfolds is~\cite[Theorem 1.7]{PPZ}:

\begin{theo}[Perry--Pertusi--Zhao]
For any pair $(a,b)$ of coprime integers, there is a unirational locally complete family, over an open subset of the moduli space of Gushel--Mukai fourfolds, of smooth polarized hyper-K\"ahler varieties of dimension $2(a^2+b^2+1)$, degree $2(a^2+b^2)$, and divisibility $a^2+b^2$.
\end{theo}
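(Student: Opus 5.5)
The plan mirrors the cubic fourfold argument of~\cite{BLMNPS:Family}, with Theorem~\ref{thm:stubKuComp} for Gushel--Mukai fourfolds as input.

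\emph{Lattice.} For a GM fourfold $W$, the numerical Grothendieck group of $\mathcal{K}u(W)$ carries the Mukai pairing induced by $-\chi$. For very general $W$ this algebraic lattice is $A_1^{\oplus 2}$, spanned by classes $\lambda_1,\lambda_2$ with $\lambda_i^2=2$ and $(\lambda_1,\lambda_2)=0$. The extended Mukai lattice $\widetilde H(\mathcal{K}u(W),\Z)$ is a weight~2 Hodge structure, abstractly isometric to $\widetilde\Lambda_{\mathrm{K3}}$, whose transcendental part is the vanishing cohomology of $W$. I would take $\bv=a\lambda_1+b\lambda_2$ with $\gcd(a,b)=1$. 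Then $\bv$ is primitive with $\bv^2=2(a^2+b^2)$, so a moduli space of stable objects of class $\bv$ should have dimension $\bv^2+2=2(a^2+b^2+1)$, as in Theorem~\ref{thm:Yoshioka}.

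\emph{Relative moduli.} Over an open subset $S$ of the moduli stack (or a smooth cover of it) of GM fourfolds, I would use the $S$-linear Kuznetsov component $\mathcal{K}u(\cW)$ of Example~\ref{ex:GM}. Theorem~\ref{thm:stubKuComp} gives a stability condition on each fiber. I would then upgrade this to a stability condition on $\mathcal{K}u(\cW)$ over $S$ in the sense of~\cite{BLMNPS:Family}, proving that the constructions of~\cite{PPZ} satisfy the required openness and boundedness in families. After shrinking $S$, I would choose the stability to be $\bv$-generic on the very general fiber. This yields a relative moduli space $M_\sigma(\bv)\to S$ that is proper, and in fact projective via a family version of the positivity lemma in~\cite{BM:Projectivity}, which also gives the polarization $\ell_\sigma(\bv)$.

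\emph{Fiberwise geometry.} This is the main obstacle: showing the fibers are nonempty, smooth, projective hyper-K\"ahler of K3$^{[n]}$-type with $n=a^2+b^2+1$. Smoothness and symplecticity follow as in~\cite{Muk:Symplectic}, since $\mathcal{K}u(W)$ is CY2. For stable $E$, $\Ext^2(E,E)=\C$ and the obstruction pairing is skew, so the moduli space is smooth of dimension $\bv^2+2$ and carries a symplectic form. For nonemptiness and deformation type, I would degenerate to a special GM fourfold whose Kuznetsov component is $\Db(X)$ or $\Db(X,\alpha)$ for a K3 surface $X$ (these exist in countably many divisors by~\cite{KP:GM}). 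I would transport $\bv$ and $\sigma$ across this equivalence to a stability condition in $\Stab^\dagger(X)$; that the transported stability lies in this component needs an argument, e.g.\ by deforming to geometric stability conditions. Theorem~\ref{thm:Yoshioka} and Remark~\ref{rmk:twisted} then show the special fiber is a K3$^{[n]}$-type hyper-K\"ahler manifold. Since $M_\sigma(\bv)\to S$ is proper and smooth, with symplectic fibers of constant dimension, every fiber over $S$ is deformation equivalent to it. Hence all fibers are hyper-K\"ahler of K3$^{[n]}$-type, and in particular nonempty.

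\emph{Completeness and numerics.} By the Mukai isomorphism of Theorem~\ref{thm:Yoshioka}, $H^2(M,\Z)\cong\bv^\perp\subset\widetilde H(\mathcal{K}u(W),\Z)$, so the transcendental part of $H^2(M)$ recovers the vanishing cohomology of $W$. The period map of GM fourfolds has $20$-dimensional image, so the family is locally complete of dimension $20$, using the local Torelli theorem. Unirationality follows from the unirationality of the moduli space of GM fourfolds. The polarization class is $\ell\in\bv^\perp\cap\langle\lambda_1,\lambda_2\rangle$, which is proportional to $b\lambda_1-a\lambda_2$. This class has square $2(a^2+b^2)$, and since $(b\lambda_1-a\lambda_2,\,\bv^\perp)$ generates $2\gcd$-type ideals, its divisibility in $\bv^\perp$ is computed to be $a^2+b^2$. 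That gives degree $2(a^2+b^2)$ and divisibility $a^2+b^2$ as claimed.
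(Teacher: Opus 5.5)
Your skeleton is the one the paper points to: put Theorem~\ref{thm:stubKuComp} into a relative stability condition on $\mathcal{K}u(\cW)$ as in \cite{BLMNPS:Family}, take relative moduli, specialize to a fiber with $\mathcal{K}u(W_0)\simeq\Db(X)$, and apply Theorem~\ref{thm:Yoshioka}. The specialization step, however, has a genuine gap: $\bv$-genericity.

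You only arrange genericity on the very general fiber, where it is automatic. The central charge $Z$ of a relative stability condition factors through $\Lambda=\langle\lambda_1,\lambda_2\rangle$, the lattice that stays algebraic over $S$. A wall would therefore need an integral class $w\in\Lambda$ with $Z(w)\in\R_{>0}Z(\bv)$, i.e.\ $w=c\bv$ with $0<c<1$, which is impossible for $\bv$ primitive.

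The problem sits on the special fibers, and your K3 fiber $W_0$ is one of them. There $Z(u)=0$ for every $u\in\Lambda^\perp\cap\widetilde H_{\mathrm{alg}}(\mathcal{K}u(W_0))$. So any integral $w=c\bv+u$ with $0<c<1$, $u\neq0$ and $w^2,(\bv-w)^2\ge-2$ gives an actual wall through $\sigma_{W_0}$, whatever central charge you choose on $\Lambda$. Theorem~\ref{thm:NonEmpty} supplies semistable objects of classes $w$ and $\bv-w$ with the same phase as $\bv$.

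This really occurs. Suppose $\Lambda^\perp\cap\widetilde H_{\mathrm{alg}}=\Z\tau$ with $\tau^2=-10$ and $\tfrac12(\lambda_1+\tau)$ integral (a discriminant-$10$ lattice), and $b$ is even. Then $w=\tfrac12(\bv+\tau)$ is integral with $w^2=(\bv-w)^2=\tfrac{a^2+b^2-5}{2}\ge-2$. In that case Theorem~\ref{thm:Yoshioka} does not apply at $W_0$, $M_\sigma(\bv)\to S$ is not smooth and proper there, and your deformation argument breaks.

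The statement also needs genericity on a Zariski open set. ``Very general'' only excludes a countable union of Noether--Lefschetz divisors. The missing step is a wall analysis. Since $2u\in\widetilde H$ and $w^2\ge-2$ forces $-u^2<\bv^2+2$, only finitely many Hodge-special divisors are bad. Remove them to define $S$, and choose the K3 fiber on a divisor not among them; this has to be checked against $\bv$.

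Three further steps are asserted rather than proved.

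\emph{Smoothness.} Skew-symmetry of $\Ext^1(E,E)\times\Ext^1(E,E)\to\Ext^2(E,E)\cong\C$ only kills the quadratic term of the Kuranishi map. Mukai's K3 argument kills all obstructions using the trace $\Ext^2(E,E)\xrightarrow{\cong}H^2(\cO_S)$ and the smoothness of $\Pic$. For $E\in\mathcal{K}u(W)$ the trace lands in $H^2(W,\cO_W)=0$, so that argument is unavailable. You need a genuine unobstructedness argument. Alternatively, prove that $M_\sigma(\bv)\to S$ is smooth along a $\bv$-generic K3 fiber, which uses that $\bv$ stays algebraic so stable objects deform along the induced deformation of $\Db(X)$. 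Then shrink $S$, using that the non-smooth locus has closed image by properness.

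\emph{Transport to $\Stab^\dagger(X)$.} You flag this, but it needs an actual proof, for instance exhibiting a $\sigma$ in the component for which the images of skyscraper sheaves are stable. It is not a formality.

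\emph{Divisibility.} Your justification is not an argument, although the value is right. The correct reasoning is short. Since $\Lambda$ is primitive in the unimodular lattice $\widetilde H$, the map $x\mapsto((\lambda_1,x),(\lambda_2,x))$ is onto $\Z^2$. The condition $x\in\bv^\perp$ forces this pair to be $k(b,-a)$, so $(b\lambda_1-a\lambda_2,x)=k(a^2+b^2)$. Hence the divisibility is exactly $a^2+b^2$.
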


The easiest examples in both theorems give exactly the hyper-K\"ahler fourfolds we discussed in the previous sections.
For instance, for a cubic fourfold $W$, when $a=b=1$, we get the variety of lines $F(W)$ together with its Pl\"ucker polarization.
The interesting point is that, as mentioned before, stability conditions also exist on the Kuznetsov components of smooth cubic threefolds hyperplane sections $W_H\subset W$ (and they are essentially unique).
Moreover, there is a numerical class for which the moduli space on $\mathcal{K}u(W_H)$ is the variety of lines $F(W_H)$.
The embedding $F(W_H)\subset F(W)$ can then be realized as the image via the functor $\Phi_H$ in~\eqref{eq:spherical} of such stable objects (for a general result, see~\cite[Theorem 8.2]{LLPZ:Higher}).

\subsubsection*{Deforming stability conditions}

In order to deal with the CY2 categories in Example~\ref{ex:K3defo}, we need a general result. 
Let us assume that $\cD$ is a smooth and proper stable $\infty$-category which linear over $\Spec(R)$, where $(R,\fM)$ is a complete DVR. 
We denote the residue field by $k\coloneqq R/\fM$ and the fraction field by $K\coloneqq\mathrm{Frac}(R)$.
By base changing to the closed point and to the generic point, we get the two associated linear stable $\infty$-categories $\cD_k$ and $\cD_K$ which are respectively linear over $k$ and $K$.
The following result will be proved in the work in progress~\cite{LMPSZ1}:

\begin{theo}\label{thm:defo}
If $\cD_k$ has a stability condition $(Z_k,\cA_k)$ with deformable $Z_k$, then there exists a stability condition $(Z_K,\cA_K)$ on $\cD_K$.
\end{theo}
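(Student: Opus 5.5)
The strategy is to deform the heart rather than the central charge directly, in the spirit of the family stability conditions of~\cite{BLMNPS:Family}. The deformability of $Z_k$ will mean that $Z_k$ factors through a lattice $\Lambda$, namely the numerical Grothendieck group or a suitable Mukai-type lattice, which is locally constant over $\Spec(R)$. Concretely, there is a class map $v\colon K(\cD)\to\Lambda$ compatible with the specialisation maps $K(\cD_K)\leftarrow K(\cD)\to K(\cD_k)$, and $Z_k$ factors through $\Lambda$ together with a support property quadratic form defined on $\Lambda_\R$. We then set $Z_K\coloneqq Z_k$, viewed on $\Lambda$, and the whole task is to construct a heart $\cA_K\subset\cD_K$ compatible with it.

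\textbf{Step 1: a relative heart over $R$.} Using the theory of t-structures on linear $\infty$-categories, we extend the bounded t-structure with heart $\cA_k$ on $\cD_k$ to a t-structure on $\mathrm{Ind}(\cD)$. Since $R$ is complete, an object of $\cD$ is the limit of its reductions modulo $\fM^m$. We therefore first extend the t-structure over each Artinian thickening $R/\fM^m$, using a deformation-theoretic argument, and then pass to the inverse limit. The candidate aisle is
\[
\cD^{\le0}\coloneqq\{E\in\cD : E\otimes_R k\in \cD_k^{\le0}\}.
\]
The point to verify is that this aisle, together with the corresponding coaisle, yields a t-structure that is $R$-local, meaning that it is stable under tensoring with $\Dperfinf(R)^{\le0}$. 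Here we use smoothness and properness of $\cD$ to control the Hom-complexes, which are perfect over $R$, and we use Nakayama's lemma.

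\textbf{Step 2: restriction to the generic fibre.} Next we base change to $K=R[\fM^{-1}]$. The heart $\cA_K$ consists of the localisations of objects of the relative heart, that is, of torsion-free-type objects. We check that $\cA_K$ is the heart of a bounded t-structure on $\cD_K$; boundedness follows from boundedness on $\cD_k$ together with properness. We also check that $Z_K$ maps $\cA_K\setminus\{0\}$ into the semi-closed upper half-plane. For this, we show that every object $E_K\in\cA_K$ admits an $R$-flat lattice $E\in\cD$ with $E\otimes k\in\cA_k$, and then we use $Z_K(E_K)=Z_k(E\otimes k)$, which holds by the local constancy of $v$.

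\textbf{Step 3: Harder--Narasimhan filtrations and the support property.} For the support property, we take $\cA_K$-semistable objects, choose flat lattices for them, and apply an argument in the style of Langton to find a lattice whose special fibre is $\sigma_k$-semistable, or at least has HN factors controlled by the quadratic form. Negativity of the form on $\ker Z$, combined with the fact that the quadratic form is unchanged under specialisation, then gives the support property. Existence of HN filtrations follows from discreteness of the image of $Z$ on $\Lambda$, which we may assume after a small perturbation, together with noetherianity of $\cA_K$, which is inherited from $\cA_k$ via lattices.

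The main obstacle is Step 1 combined with the lattice existence in Step 2. Extending a heart from the special fibre to the whole DVR is not a formal consequence of the definitions and genuinely requires the $\infty$-categorical, $R$-linear structure. I would first attempt to construct the t-structure on each $\cD\otimes R/\fM^m$ by induction, using the open-heart property and the vanishing of obstructions coming from the CY-type Serre duality, and then pass to the limit. The Langton-type step, producing semistable reductions of generic semistable objects, is the second delicate point.
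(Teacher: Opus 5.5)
The paper does not prove this statement. It defers the proof to \cite{LMPSZ1} and says only that the key input is the deformation theory of t-structures of \cite{MPS1}. Your outline (extend the t-structure of $\cD_k$ over $R$, pass to $K$, check the axioms on lattices via $Z_K(E_K)=Z_k(E\otimes k)$) is consistent with that hint. As a proof, however, it has genuine gaps exactly where the content lies.

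\textbf{Step 1 rests on a mechanism that is not available.} There is no Calabi--Yau hypothesis: $\cD$ is only smooth and proper over $R$. Moreover, obstructions to lifting objects along the deformation do not vanish even for CY2 categories: a line bundle on a special K3 fibre whose class is not algebraic on the generic fibre does not lift, which is exactly why $Z_k$ must be deformable. In fact nothing needs to lift. Over $R/\fM^m$ every object has a finite $\fM$-adic filtration whose graded pieces are pushed forward from the special fibre. So the analogue of your aisle is the extension closure of $i_*\cD_k^{\le 0}$, and it defines a t-structure on the Ind-category for formal reasons. What is not formal is that truncations of objects of $\cD$, and then of $\cD_K$, stay in the small categories, which is what you need to get a bounded t-structure on $\cD_K$ at all.

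\textbf{Noetherianity is used but not available.} The finiteness just described, the noetherianity of $\cA_K$, and the termination of Langton-type modifications all rest on $\cA_k$ being noetherian. You use this explicitly in Step 3, but it fails for the heart of an arbitrary stability condition. The perturbation you postpone to Step 3 must therefore come first, and on the special fibre. Deform $\sigma_k$ by Bridgeland's deformation theorem inside the linear subspace $\Hom(\Lambda,\C)$ of deformable central charges until $Z_k$ takes values in $\Q\oplus\Q i$. Then $Z_k$ has discrete image, and this makes $\cA_k$ noetherian.

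\textbf{The support property in Step 3 is the second gap.} Your fallback, a lattice whose special fibre has HN factors ``controlled by the quadratic form'', does not give $Q(v(E_K))\geq 0$. The set $\{Q\geq 0\}$ is not closed under adding classes of different phases, however close. Take $Q$ with orthogonal basis $e_1,e_2,f$, $Q(e_1)=Q(e_2)=1=-Q(f)$, and $Z$ the projection to $\langle e_1,e_2\rangle\cong\C$. Then $e_1+f$ and $\cos\epsilon\,e_1+\sin\epsilon\,e_2+f$ are isotropic, but their sum has $Q=2(\cos\epsilon-1)<0$. What you need is genuine semistable reduction: every $Z_K$-semistable $E_K\in\cA_K$ must admit a lattice with $\sigma_k$-semistable special fibre. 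A weaker version would also do: a lattice whose special fibre has HN phases in an interval of length at most some fixed $\delta<1$, which gives the support property for the modified form $Q+c|Z|^2$. This cannot be quoted from \cite{BLMNPS:Family}, where a stability condition is given on all fibres. It has to be proved from the relative heart, $\sigma_k$ and the completeness of $R$ alone, and your outline contains no argument for it.
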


The assumption that $Z_k$ is \emph{deformable}, can be easily formalized in terms of factorization through the relative numerical Grothendieck group defined in \cite{BLMNPS:Family}. 
Since such a condition is satisfied in all the geometric applications that we are about to discuss, we prefer not to delve into more details. 
It should be noted that a key ingredient in the proof of the above result is a well-behaved deformation theory of t-structures, which is indeed dealt with in~\cite{MPS1}.

The point to keep in mind is that, as soon we have a category which fits into a smooth and proper family, then the very general member of such a family has stability conditions as soon as a special member does. 
For instance, this allows to construct examples of stability conditions on very general abelian $n$-folds and hyper-K\"ahler manifolds of K3$^{[n]}$-type and Kum$_n$-type.
This is done in~\cite{LMPSZ2} for the first two cases and in~\cite{Chen} for the last one.
Very recently, stability conditions have been constructed on any projective scheme over an algebraically closed field of characteristic~0 in~\cite{Li:Stability}.

We can apply Theorem~\ref{thm:defo} to the non-geometric setting of Example \ref{ex:K3defo} as well.
In that case, associated to a family $\cM\to S$ of polarized hyper-K\"ahler manifolds with $\cM_{s_0}\cong X^{[n]}$, $X$ a K3 surface, we have a relative CY2 category $\cD$ over $S$ such that $\Ho\cD_{s_0}\cong\Db(X)$. 
Since $\Db(X)$ has stability conditions, we can then apply Theorem~\ref{thm:defo} to get stability condition on $\Ho\cD_{s}$, where $s$ is a very general point in $S$.
By taking relative moduli spaces, in the work in progress~\cite{MPS1} we will prove the following:

\begin{theo}
In the setting above, there exists a countable family of proper closed subsets $\{S_i\}_{i\in\Z}$ such that, for all $s\in S\setminus \left(\bigcup_{i\in\Z}S_i\right)$, $\cM_s$ is isomorphic (as a polarized variety) to a moduli space of stable objects in $\Ho\cD_{s}$.
\end{theo}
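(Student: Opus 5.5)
We work over the very general locus of $S$ and aim to realise each fibre $\cM_s$ as a moduli space. The model is the classical case at $s_0$: there $\cM_{s_0}\cong X^{[n]}$, and $X^{[n]}$ is itself a moduli space of stable objects in $\Db(X)\cong\Ho\cD_{s_0}$. Indeed, by Remark~\ref{rmk:Gieseker} and Theorem~\ref{thm:Yoshioka} applied to the Mukai vector $\bv=(1,0,1-n)$, ideal sheaves of $n$ points are Gieseker stable. Hence, for a suitable $\bv$-generic $\sigma_0\in\Stab^\dagger(X)$ near the Gieseker chamber, $M_{X,\sigma_0}(\bv)\cong X^{[n]}$, and the polarization $\ell_{\sigma_0}(\bv)$ can be matched with the given one by moving $\sigma_0$ inside $\Stab^\dagger(X)$, since ample cones of $X^{[n]}$ are described by walls in $\Stab^\dagger(X)$.

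The plan has four steps.

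\textbf{Step 1: extend $\sigma_0$ to nearby fibres.} Base change $\cD$ to the complete local ring of a curve through $s_0$, reaching a very general point; that ring is a complete DVR. Theorem~\ref{thm:defo} then produces a stability condition on the geometric generic fibre. We will need that the central charge $Z_0(-)=(\Omega_0,v(-))$ is deformable, i.e.\ factors through the relative numerical Grothendieck group. This should hold because $\bv$ and the classes defining $Z_0$ stay algebraic along the family. The point is that the Mukai lattice of $\Ho\cD_s$ is identified with the extended lattice $\widetilde{H}$ attached to $\cM_s$: $H^2(\cM_s)\cong\bv^\perp$ as in Theorem~\ref{thm:Yoshioka}, and the algebraic part contains $\bv$ together with the polarization class. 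Away from countably many closed subsets $S_i$, the algebraic part of $\widetilde{H}(\Ho\cD_s)$ is then exactly the rank-two lattice spanned by $\bv$ and the polarization.

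\textbf{Step 2: form relative moduli.} Using the family version of stability from~\cite{BLMNPS:Family}, we build a relative moduli space $M_\sigma(\bv)\to S'$ of $\sigma_s$-stable objects of class $\bv$, over a suitable étale or analytic neighbourhood $S'$ of the very general locus. Openness of stability and boundedness make this a proper algebraic space over $S'$. Genericity of $\sigma_s$ with respect to $\bv$ is automatic for very general $s$, because $\bv$ is primitive and the rank-two algebraic lattice has only countably many walls. Smoothness and symplecticity of the fibres follow as in the K3 case: $\Ext^2$ is one-dimensional by the CY2 property, and the Yoneda pairing is skew after composing with the trace.

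\textbf{Step 3: identify the fibres with $\cM_s$.} The relative moduli space is smooth and proper over the base, with fibre $X^{[n]}=\cM_{s_0}$ at $s_0$. Both it and $\cM$ are therefore families of hyper-K\"ahler manifolds of K3$^{[n]}$-type deforming $\cM_{s_0}$. We compare their period maps: for both, $H^2$ is $\bv^\perp$ inside $\widetilde{H}(\Ho\cD_s)$, since the construction in Example~\ref{ex:K3defo} matches Hochschild cohomology of $\cD_s$ with deformations of $\cM_s$. The two polarizations also agree at $s_0$. By the Torelli Theorem of Section~\ref{subsec:Torelli}, two marked polarized hyper-K\"ahler manifolds with the same period and the same ample class are isomorphic. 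Hence the fibres coincide as polarized varieties for very general $s$, after removing further countably many closed subsets where the Picard rank jumps and non-separatedness could create a birational but non-isomorphic model.

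\textbf{Main obstacle.} The hardest step is making Step 1 global and uniform. Theorem~\ref{thm:defo} only gives a stability condition on each DVR-generic fibre. We need a family of stability conditions with locally constant heart data over a dense set, so that the relative moduli space of~\cite{BLMNPS:Family} actually exists and the identification of periods is compatible. First we would try to show that the deformed stability conditions glue, using the deformation theory of t-structures from~\cite{MPS1} and the support property, which is open in families. The bad locus where gluing fails would then be contained in the countable union $\bigcup S_i$ of Noether–Lefschetz-type loci.
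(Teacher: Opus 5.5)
Your Steps~1--2 follow the route the paper indicates: deform a stability condition from $\Ho\cD_{s_0}\cong\Db(X)$ via Theorem~\ref{thm:defo}, then take relative moduli spaces. The way you assemble them leaves a genuine gap, essentially the one you flag yourself.

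\textbf{The base of the relative moduli space.} Steps~2--3 need a relative moduli space over a base containing both $s_0$ and the very general points. You need $s_0$ because you use that the fibre there is $X^{[n]}$ in order to compare with $\cM$. That amounts to a family of stability conditions over an open part of $S$. Theorem~\ref{thm:defo} only gives a stability condition on the generic fibre of a complete DVR, and your gluing paragraph supplies no mechanism for more. Your base, an ``\'etale or analytic neighbourhood of the very general locus'', also has no reason to contain $s_0$. Conversely, you never explain how a stability condition on a DVR generic fibre produces one on $\Ho\cD_s$ for a \emph{closed} point $s$.

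The missing idea is that no gluing is needed, because the statement is pointwise:
\begin{enumerate}[{\rm (1)}]
\item Choose $\Spec R\to S$, with $R$ a complete DVR, closed point $s_0$, and generic point lying over the generic point of a model $S_0$ of $S$ over a countable algebraically closed field of definition $k_0\subset\C$. A sufficiently general curve through $s_0$ will do.
\item Form the relative moduli space $M_R(\bv)\to\Spec R$ for the pair $(\sigma_0,\sigma_K)$. Check that it is proper and smooth over $R$ with special fibre $X^{[n]}$. For smoothness, note that $\dim\Ext^2=1$ does not by itself kill obstructions; one needs that they are detected, through the trace, by the Mukai vector, which stays algebraic.
\item Identify the generic fibre of $M_R(\bv)$ with $\cM_K$.
\item Pass to closed points. $\overline{K}$ and $\C$ are algebraically closed of the same transcendence degree over the countable field $k_0(S_0)$. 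So for every $s$ mapping to the generic point of $S_0$, an isomorphism $\overline{K}\cong\C$ over $k_0(S_0)$ carries $\Ho\cD_{\overline K}$, $\cM_{\overline K}$ and the moduli space to $\Ho\cD_s$, $\cM_s$ and a moduli space of stable objects in $\Ho\cD_s$.
\end{enumerate}
The excluded $s$ are those lying on the countably many proper closed subsets defined over $k_0$. This is where $\bigcup_i S_i$ comes from, not from a locus where gluing fails.

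\textbf{The identification in Step~3.} This step is unsupported. Claiming that $H^2(\cM_s)$ and $H^2$ of the moduli space are the same sublattice $\bv^\perp$ of a Mukai Hodge structure on $\Ho\cD_s$ presupposes two things: a Mukai Hodge theory on the noncommutative fibres, and a comparison of the two variations of Hodge structure along the family. Matching $\HH^2(\cD_s)$ with deformations of $\cM_s$ is only first-order information and does not give equality of periods.

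A more direct route avoids Torelli altogether. Since $\cD$ is built from $\cM$ by the comonad construction, it should come with a family of objects of $\cD_R$ parametrized by $\cM_R$ deforming the ideal sheaves $I_Z$. By openness of stability this gives a morphism $\cM_R\to M_R(\bv)$ that is an isomorphism on special fibres. Since both sides are smooth and proper over $R$, it is an isomorphism.

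\textbf{Two smaller points.}
\begin{itemize}
\item Deformability of $Z_0$ forces $\Omega_0\in\langle\bv,h\rangle_\C$, so you cannot move $\sigma_0$ freely to match the polarization. The upside is that $\ell_{\sigma_0}(\bv)$ is then automatically a multiple of $h$.
\item $\bv$-genericity at very general $s$ holds because a nondegenerate central charge on a rank-two positive definite lattice has no walls for primitive $\bv$. It is not a consequence of walls being countable.
\end{itemize}
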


One natural question is if we can then use this categorical setting to construct interesting Lagrangian submanifolds on such moduli spaces, by starting from noncommutative curves in the noncommutative K3 surface $\Ho\cD_{s}$, thus generalizing the case of Kuznetsov components discussed in the previous section.

\subsubsection*{Equivariant stability on noncommutative abelian surfaces}
In the same spirit as in the previous case, one can now take the noncommutative deformation $\cD$ of the derived category of an abelian surface constructed in Example~\ref{ex:abeliandefo}. 
Such a deformation is obtained as a suitable equivariant category of the derived category of a family of K3 surfaces. Since we know how to construct stability conditions on K3 surfaces, the very nice idea in~\cite{BPPZ} is to apply an equivariant construction for stability conditions that enhances the one in~\cite{Inducing}. The output is then the following:

\begin{theo}[Bayer--Perry--Pertusi--Zhao]
Let $\cM\to S$ be a family of polarized hyper-K\"ahler manifolds of Kummer-type.
Then there is an open subset $U\subset S$ such that, for every closed point $u\in U$, the variety $\cM_u$ is obtained as the fiber of the Albanese morphism of a moduli space of stable objects on the fiber $\cD_u$ of the $S$-linear CY$2$ category $\cD$ above.
\end{theo}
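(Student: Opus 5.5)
The plan is to reduce to the K3 case via the equivariant (Elagin) duality of Example~\ref{ex:abeliandefo}, and then argue in families. Let $\cX\to S$ be the family of lattice-polarized K3 surfaces of Example~\ref{ex:abeliandefo}. Over $s_0$ we have $X_{s_0}=K_1(A)$ and $\mathrm{D}^b_G(A)\cong\Db(X_{s_0})$ by \cite{BKR}. The dual $\Z/2\Z$-action on $\Dperfinf(\cX)$ deforms over $S$, and $\cD$ is the category of $\Z/2\Z$-equivariant objects.

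\emph{Step 1: stability conditions on the fibres.} On each K3 fibre $X_u$ choose $\sigma_u\in\Stab^\dagger(X_u)$ that is invariant under the dual $\Z/2\Z$-action. Such invariant stability conditions form a nonempty closed submanifold, and this is the point where the lattice polarization is used. Inducing as in \cite{Inducing}, enhanced to the $\infty$-categorical setting, gives a stability condition $\sigma_u^G$ on $\Ho\cD_u$. Since Bridgeland stability is open in families \cite{BLMNPS:Family}, these assemble into a relative stability condition over an open subset $U\subset S$.

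\emph{Step 2: moduli spaces.} Fix a primitive class $\bv$ in the numerical Grothendieck group of $\cD$ with $\bv^2$ matching the Kummer dimension plus $4$, and take $\sigma_u^G$ to be $\bv$-generic. The relative moduli space $M_{\cD_u,\sigma_u^G}(\bv)$ exists as a proper algebraic space over $U$. At $s_0$ we have $\cD_{s_0}\simeq\Db(A)$, so this fibre is a moduli space of Bridgeland stable objects on an abelian surface. By Yoshioka, it is smooth and its Albanese fibre is a generalized Kummer variety of the correct dimension, with the standard description of $H^2$.

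Smoothness, the symplectic form, and the Albanese map must then be propagated over $U$. For the Albanese map, I would use the noncommutative analogue of $\Pic^0\times\hat A$: the group of autoequivalences of $\cD_u$ given by the deformed translations and twists. These still act, because the dual $G$-action on the K3 side corresponds to the character group action. The quotient by this action, or equivalently the fibre of the natural map to the deformed $A\times\hat A$, is a relative smooth proper family. Its fibre over $s_0$ is $K_n(A)$, so by local triviality it is a family of Kummer-type manifolds. The unobstructedness argument of Mukai carries over, since $\Ext^2$ of a stable object in a CY$2$ category is one-dimensional.

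\emph{Step 3: matching with $\cM$.} Comparing periods through the Mukai-type Hodge isometry $H^2\cong\bv^\perp$, which holds on the K3 side and is transported along the equivalence, the Torelli theorem of Section~\ref{subsec:Torelli} identifies the Albanese fibre over $u$ with $\cM_u$ as a polarized variety. Here both families must be locally complete and their period maps must agree after shrinking $U$.

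The main obstacle I expect is Step 1 together with the Albanese structure in Step 2. One needs an invariant stability condition that varies in a family over an open set, and one must make sense of the Albanese fibre over a noncommutative base. For the latter, I would first try to realize the Albanese as the relative moduli of the deformed skyscraper and line-bundle classes, and show that it is a torsor under a family of abelian fourfolds.
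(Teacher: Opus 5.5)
Your overall architecture is the one the paper sketches: deform the dual $\Z/2\Z$-action over the lattice-polarized K3 family, take invariant stability conditions on the K3 fibres, and induce them on $\cD_u$ via the enhanced version of \cite{Inducing}. However, the crucial input of Step~1 is asserted rather than produced, and several claims in Steps~2--3 are wrong.

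\emph{Step~1.} The lattice polarization is what lets the dual action deform, and it gives you \emph{invariant central charges} on $\widetilde{H}_{\mathrm{alg}}(X_u,\Z)$. It does not give a fixed point of the $\Z/2\Z$-action on $\Stab^\dagger(X_u)$ over such a central charge: $\sigma$ and its image have the same central charge but need not coincide. It does not even show that the action preserves $\Stab^\dagger(X_u)$. Nor can you glue fibrewise choices by ``openness of stability'': in \cite{BLMNPS:Family} that concerns families of objects with respect to a stability condition already given over the base. What is missing is an actual source of invariant stability conditions. The natural one is the Kummer point. Under $\Db(X_{s_0})\cong\mathrm{D}^b_G(A)$, a stability condition induced from a geometric one on $\Db(A)$ (automatically $(-1)^*$-invariant) is invariant under tensoring with characters, i.e.\ under the dual action. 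One then deforms it over the family along its invariant central charge, which stays algebraic because of the lattice polarization. Invariance persists by uniqueness of the deformation with prescribed central charge. This is also consistent with the statement only providing an open $U$.

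\emph{Step~2.} The Albanese is not an obstacle, and the route you propose for it is flawed. Once $M_u\coloneqq M_{\cD_u,\sigma^G_u}(\bv)$ is a smooth proper variety, its Albanese morphism is the classical one. Yoshioka's finite \'etale cover $K_{\bv}\times A\times\hat A\to M_{s_0}$ deforms as a product along with $M_u$ (Beauville), so the Albanese fibres over $U$ are deformations of $K_{\bv}$. By contrast, translations and twists by non-$2$-torsion elements do not commute with $(-1)^*$. So the dual-action picture gives you no deformed $A\times\hat A$ acting on $\cD_u$. Moreover, $M_{s_0}/(A\times\hat A)$ is not the Albanese fibre: it is the quotient of the fibre by a finite group of order $(\bv^2/2)^4$, because the orbit map followed by the Albanese map is an isogeny of that degree. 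Two smaller points:
\begin{enumerate}[{\rm (i)}]
    \item $\bv^2=\dim\cM_u+2$, not $+4$.
    \item Smoothness needs more than $\dim\Ext^2=1$: you must show that the Kuranishi map vanishes identically, e.g.\ via a trace-type or formality argument as for K3 categories.
\end{enumerate}

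\emph{Step~3.} For $u\neq s_0$ there is no equivalence between $\cD_u$ and a K3 category along which to transport $H^2\cong\bv^\perp$. You need three things instead:
\begin{enumerate}[{\rm (i)}]
    \item a Mukai Hodge structure on $\cD_u$ itself, e.g.\ from the invariant part of $\widetilde{H}(X_u)$ under the dual action, or from topological K-theory;
    \item a Mukai morphism built from a quasi-universal family, hence a morphism of Hodge structures;
    \item a deformation argument from Yoshioka's theorem at $s_0$ showing it is an isometry compatible with parallel transport, which is what Torelli requires.
\end{enumerate}
Finally, matching with the given $\cM$ requires choosing the lattice and $\bv$ according to its polarization type; you simply assume this.
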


We refer to \cite{P:Survey} for a more in depth discussion on this result.

%%%%%%%%%%%%%%%%%%

\end{document}